\documentclass[reqno,12pt]{article}


\usepackage[utf8]{inputenc}
\usepackage[scaled=0.875]{helvet} 
\usepackage{courier}
\usepackage{graphicx}
\usepackage{color} 
\usepackage{authblk}
\usepackage{geometry}
\geometry{hmargin=3cm,vmargin=3cm}
\parindent 0ex 
\parskip 1ex 

\usepackage[english]{babel}
\usepackage{amsmath,amsfonts,amssymb,amsthm}
\definecolor{steelblue}{RGB}{70, 130, 180}
\usepackage[colorlinks=true,linkcolor=steelblue,citecolor=blue]{hyperref} 

\usepackage{mathrsfs} 
%
%
\usepackage{tikz}
\usetikzlibrary{arrows,decorations.pathmorphing,backgrounds,positioning,fit,petri}
\usetikzlibrary{shapes}
\usetikzlibrary{decorations.shapes}
\usepackage[babel=true,kerning=true]{microtype} 

\tikzset{paint/.style={ draw=#1!50!black, fill=#1!50 },
    decorate with/.style=
    {decorate,decoration={shape backgrounds,shape=#1,shape size=2.5pt}}}


\newcommand{\N}{{\mathbb{N}}}

\newcommand{\Z}{{\mathbb{Z}}}

\newcommand{\PP}{{\mathbb{P}}}
\newcommand{\1}{{\mathbf{1}}}
\let\mathcal\mathscr 
\let\geq\geqslant  
\let\leq\leqslant 
%
\def\AA{\mathcal{A}}

\def\FF{\mathcal{F}}
\def\MM{\mathcal{M}}
\def\NN{\mathcal{N}}

\def\LL{\mathcal{L}}

\def\UU{\mathcal{U}}
\def\RR{\mathcal{R}}
\def\II{\mathcal{I}}
\def\SS{\mathcal{S}}
\def\pp{\mathcal{P}}
\def\YY{\mathcal{Y}}
\def\RR{\mathcal{R}}
\def\ee{\mathrm{e}}

\def\blambda{\overline \lambda}

\def\aa{\mathtt{a}}
\def\cc{\mathtt{c}}

\def\gg{\mathtt{g}}

\renewcommand\tt{\mathtt{t}}


\theoremstyle{plain}
\newtheorem{theo}{Theorem}[section]
\newtheorem{coro}[theo]{Corollary}

\newtheorem{prop}[theo]{Proposition}
\newtheorem{lemm}[theo]{Lemma}

\newtheorem{rema}[theo]{Remark}

\newtheorem*{exam*}{Example}


\makeatletter
\@addtoreset{equation}{section}

\date{\vspace{-5ex}}
\title{Ergodicity of some dynamics of DNA sequences}

\begin{document}

\author[1]{Mikael {\sc Falconnet}}
\author[2]{Nina {\sc Gantert}}
\author[3]{Ellen {\sc Saada}}
\affil[1]{LPO Gustave Eiffel, Bordeaux, France
}
\affil[2]{Technische Universit\"at M\"unchen, 
Fakult\"at f\"ur Mathematik, Germany
}
\affil[3]{CNRS, UMR 8145 laboratoire MAP5, Universit\'e de Paris -- Campus Saint-Germain-des-Pr\'es, France
}

\renewcommand\Authands{ and }

\maketitle  

\begin{center}
{\bf To Guy Fayolle, on the occasion of his birthday}
\end{center}

\begin{abstract} 
We   define interacting particle systems on configurations of  the integer lattice 
(with values in some finite alphabet) by the 
superimposition of two dynamics: 
a substitution process 
 with finite range rates, 
and a circular permutation mechanism (called  ``cut-and-paste'')  
with  possibly  unbounded  range. 
The model is motivated by the dynamics of DNA sequences: we consider 
an ergodic model for substitutions, the RN+YpR model (\cite{piau:solv}),
with three particular cases, 
the  models JC+$\cc$p$\gg$, T92+$\cc$p$\gg$, and RNc+YpR. 
We investigate whether they remain ergodic 
with the additional cut-and-paste mechanism, which models insertions and deletions of nucleotides. 
Using either duality or attractiveness techniques, 
we provide various sets of sufficient conditions, concerning only the substitution rates, 
 for ergodicity of  the superimposed process.
They  imply ergodicity of the models JC+$\cc$p$\gg$, T92+$\cc$p$\gg$  
 as well as the attractive RNc+YpR, all with an additional
 cut-and-paste mechanism.
 \end{abstract}

\section{Introduction}\label{sec:intro}
Motivated by biological models for the evolution of DNA sequences,  
this paper contributes to the classical topic of ergodicity of 
interacting particle systems (see e.g. \cite{ligg:IPS}). 
Let us first give the biological context, then explain the interacting 
particle system it induces, and how we tackle its ergodicity.
This last point corresponds to the intriguing question of the ergodicity
for a superimposition of an ergodic particle system (here a  generalized  spin system)  
and of a non ergodic one (here cyclic permutations which generalize exclusion processes). 
The spin values belong to a finite alphabet of size larger than two, 
which adds a difficulty. \\ 

\noindent \textbf{The biological set-up.}
The study of evolutionary relationships among living organisms has
entered the genomic age in the past decades. 
To model the evolution of DNA sequences remains an important
and difficult part of phylogenetic analysis.
Generally, every method of
phylogenetic reconstruction at the molecular level is based on a
probabilistic  model, for codons  or for  nucleotides. 
Jukes \& Cantor, in \cite{jukes:69},
were the first to introduce a probabilistic model, the (JC) model, to study the changes
in DNA sequences. In the (JC) model, DNA sequences are encoded
as elements of $\AA^N$, where  the positive integer $N$ stands for the
number of nucleotides in one strand of the DNA molecule, and $\AA$ for
the  nucleotide  alphabet $\{\aa,\tt,\cc,\gg  \}$,  where the  letters
represent  adenine, thymine,  cytosine and  guanine  respectively. The
(JC)  model   deals  with  the  product  of  $N$
independent  identically  distributed  continuous-time  Markov  chains
modelling single site nucleotide substitutions.  

The substitution-rate matrix in the (JC)  model is the simplest
possible because all substitutions occur at the same
rate. Since then, other nucleotide substitution processes have been 
introduced to refine this matrix (see for 
instance \cite{kimu:80,fels:81,hase:85,tamu:92,tamu:93,yang:94b})     until
the  generalized time  reversible (GTR) model introduced in~\cite{tava:GTR}
which is such  that the Markov process is reversible,  and with no more
restriction on  the structure of  the matrix. In all  these processes,
the independence assumption on sites was kept.  As a  
consequence, there exists  a unique
stationary probability measure for  the process, which is product.  
This  means  for  instance   that  in  a  long DNA  sequence  at
equilibrium  the frequency  of a  dinucleotide $x$p$y$  should  be the
product of the $x$ and $y$ frequencies (where, for subsets $X$ and $Z$ 
of  $\AA$, $X$p$Z$ is the collection of
dinucleotides in $X\times Z$, and we write $x$p$y$ instead of $\{x\}$p$\{y\}$). 

But  this is  actually  not  the case  in  some biological  contexts.
Indeed,  since the studies  of \cite{Josse}  and \cite{Swartz},  it is
well known that the dinucleotide $\cc$p$\gg$ is less frequently present
in   many  mammals   DNA  than   it  would   be  expected   from  base
composition.  Support for  the  $\cc$p$\gg$
deficiency to be related to  DNA methylation was provided in \cite{bird:CpG}: 
the substitution rate of
cytosine by thymine is higher in methylated $\cc$p$\gg$ than in 
other  dinucleotides.  Therefore,  more realistic  substitution models
incorporating  such  neighboring   effects  have  been  introduced  by
\cite{duret:CpG}  with their Tamura+$\cc$p$\gg$  model.  To  evade the
dependency   between  neighbors,  in \cite{duret:CpG} there is an approximation for
frequencies  of trinucleotides to  capture some  features of  the true
model. B\'erard et al. in \cite{piau:solv} extended the latter model 
to   the  RN+YpR  model   and  assessed   rigorously  the   effect  of
neighbor-dependent substitutions. There,  DNA sequences are encoded as
(doubly infinite) elements of  $\AA^\Z$ and their dynamics are studied
through the techniques of interacting particle systems. The properties
of the RN+YpR model have been used to infer phylogenetic distances in
\cite{falc:dist} or $\cc$p$\gg$ hypermutability rates in \cite{BerardGueguen}.  

But substitutions  are not  the only way  to alter DNA  sequences. For
example, one  may add several extra  nucleotides to a  DNA sequence by
insertions, or remove  them by deletions. In the  model of \cite{TKF},
single sites are  inserted or deleted with rates  independent of their
positions, and this is superimposed to independent
substitutions.  There,  DNA sequences have a variable (but finite)
length  along   time  and  are  encoded  as   elements of $\bigcup_N
\AA^N$. We would like to do for  neighbor-dependent substitution processes
on DNA sequences what  was done in \cite{TKF} for independent substitutions. 
The DNA sequences are viewed, as above, as elements  of $\AA^\Z$.
To avoid
mathematical difficulties due to the insertion-deletion mechanisms (an
insertion of a single nucleotide induces a shift of the whole sequence
and  leads  to  infinite  range  interactions), we require that an
insertion and a deletion occur at  the same time. To that purpose, we
introduce a mechanism that we call  ``cut-and-paste'', whose name is
inspired from the classification  proposed by  \cite{Finnegan}  for
transposable  elements. The  latter, discovered  by \cite{McClintock},
are a type  of DNA that can  move around within the genome  and can be
distinguished in two classes : class I is  commonly called
``copy-and-paste'', and  class II, ``cut-and-paste''. In our settings, we
consider the simplest cut-and-paste mechanism, that is, the transfer
of  one   nucleotide  into  the  sequence   as shown  on
Figure~\ref{figu:InsDel}. There, the transfer of nucleotide $\tt$ can be seen
as its deletion and reinsertion three nucleotides further. 
\begin{figure}[ht]
  \includegraphics{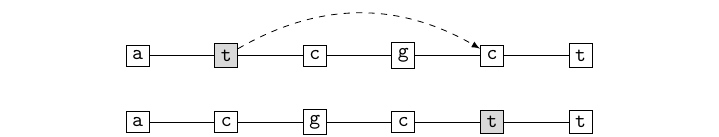}
\caption{One example of the cut-and-paste of a nucleotide into the sequence}
\label{figu:InsDel}
\end{figure} \\ 
\noindent \textbf{The modelling by a particle system, and the question of its ergodicity.}
We consider configurations that  can take values 
in some finite alphabet on each site of the integer lattice, as detailed above; 
 thus  the alphabet will consist of the nucleotides $\aa, \cc, \gg$ 
and $\tt$, but the questions we ask are not relying on this specific choice.
On this configuration space, we superimpose two dynamics.
The first one is given by a substitution process, and the second 
one by a cut-and-paste mechanism with a rate $\rho>0$. 
The substitution process is quite general and includes for instance 
the stochastic Ising model with finite rate of dependence. The cut-and-paste 
process can have unbounded range and permutes the values of a certain 
interval of sites; 
it can be seen as a generalization of an exclusion process. 

In this superimposition, the first dynamics is ergodic but not the second one. 
 Note that ergodicity of our process, at least in the case of finite range permutations, would follow from the ``Positive Rates Conjecture'' which says that in dimension $1$, a finite range interacting particle system with strictly positive transition rates is ergodic, see \cite{ligg:IPS}, page 201. The status of this conjecture is not clear to us, see
 \cite{Grayguide}. It has been proved for attractive nearest-neighbor spin systems (a spin system is an interacting particle system with two states such that only one coordinate can change in each transition, see \cite{ligg:IPS}, Chapter II) in \cite{Grayrates}. But, even if the permutations have finite range, our model does 
not fit into this frame, since the cut-and-paste mechanism changes several sites at 
the same time, and moreover we work with an alphabet of more than two values.
 There is a general sufficient condition for ergodicity, the so-called Dobrushin's ``$M < \varepsilon$'' 
 condition (see \cite{dobrushin:M<eps}, or \cite{ligg:IPS}, Chapter I). 
However, this condition is far from being necessary: For instance, it gives 
ergodicity of the one-dimensional nearest-neighbor stochastic Ising model only for high 
values of the temperature. Note that if a process does fit the ``$M < \varepsilon$'' condition, 
adding a small amount of ``stirring''  will not alter the validity of the latter, hence the 
superimposition with a stirring mechanism  (which permutes the values on two neighboring sites) 
is ergodic for $\rho$ small enough.

To derive ergodicity conditions for our superimposition of two dynamics, 
we use two powerful techniques: the first one is based
on duality, and the second one on attractiveness and couplings. 
Our results involve only the parameters of the substitution process, 
thus they are valid for {\sl any} value of $\rho$.
First we use duality through a generalization of a coupling technique with branching processes 
due to \cite{ferrari:ErgoSpinStirring} to show that, in a certain range of parameters of the 
substitution process, the superimposition is ergodic.  We then apply this general result 
to our generic example, the RN+YpR substitution model with an additional cut-and-paste mechanism. 
Second, in the spirit of \cite{GS, borrello-1},
we derive sufficient conditions on the substitution rates for attractiveness of the superimposition; 
assuming them, we obtain by different approaches
sufficient conditions on theses rates for ergodicity.
 While our first results are general, the following ones concern
the RN+YpR  model with cut-and-paste
mechanism.  They imply that its particular cases,   
the  models JC+$\cc$p$\gg$, T92+$\cc$p$\gg$, 
as well as  the  RNc+YpR model in case it is attractive, are ergodic
when they are superimposed with a cut-and-paste
mechanism.  \\

The paper is organized as follows.  In Section \ref{sect:DefAndExa} 
 we define the model 
 and the examples,  for which we state ergodicity results 
 (Theorems \ref{th:erg-RN+YpR}  and \ref{th:ergo-first-exa}). 
 In Section \ref{sect:results-ergo_duality}, we give the set-up for generalized
 duality, then state general ergodicity results  
 (Theorems \ref{theo:Ergo} and \ref{theo:Ergo2}), proved in Section 
 \ref{sect:ProofsDuality}; we then prove Theorem \ref{th:erg-RN+YpR}. 
 In Section \ref{sect:results-AttrErg}, we give the set-up for
 attractiveness, then state various results (some general, some for our examples) 
 for ergodicity, proved in Section \ref{sect:ProofsAttrErg},
 and we prove Theorem \ref{th:ergo-first-exa}.   
\section{Definitions and examples} \label{sect:DefAndExa}
In  Section~\ref{sect:ourIPS}, we
introduce  our set-up: two  types of  interacting particle  systems on
$X=\AA^\Z$,  namely a   substitution  and  a cut-and-paste
processes,  that  we superimpose.
We then define in Section~\ref{sec:Examples} the examples that will illustrate our analysis along the paper.
In Section \ref{sec:statements-erg-examples}, we state for them the ergodicity results that follow from
our analysis of the superimposed process, done in the rest
of the paper. 
\subsection{The particle system} \label{sect:ourIPS}
For an analytic study of the construction and basic properties
of these systems, we refer to the seminal
book \cite{ligg:IPS},  on which we rely  for sufficient existence conditions
of our dynamics. We will give in Section \ref{sect:ProofsDuality} 
a graphical construction of the latter,
which will be the first step to prove ergodicity results through duality. 
\subsubsection{Substitution process} \label{sect:Subs}
In such a  process,  only   one  coordinate   changes   in  each
transition.  The transition
mechanism is specified  by a non-negative function $c(\cdot)$  defined on $\Z
\times \AA \times  X$. For  $\eta \in  X$,  $x  \in \Z$  and  $a  \in \AA$, 
$c(x,a,\eta)$ represents  the rate at
which  the coordinate $\eta(x)$  flips to  $a$ when  the system  is in
state $\eta$, that is the rate at which
$\eta$  changes   to  $\eta^x_a$ defined by 
\[
  \eta^x_a(z) = \left\{ \begin{array}{lll} \eta(z) & \mbox{if} & z \neq x,
       \\ a &  \mbox{if} & z = x. \end{array} \right. 
\]
We assume that the rates 
$c(x,a,\cdot)$ are translation invariant, i.e. $c(x,a,\eta) = c(0,a, \tau_x\eta)$ 
where $\tau_x$ denotes the shift by $x$ on $X$ 
(given by $\tau_x\eta(y) = \eta(x+y)$, for all $y \in \Z$).
{}From now on, we write $c(a,\eta)$ for $c(0,a, \eta)$.
We  further assume that, for any target $a$,
the function  $c(a,\cdot)$ depends on $\eta \in X$ only  through a
finite set  $S(a) \subset \Z$ depending on $a$.

 The  pregenerator  $\LL_1$   of  the
substitution process is defined on a cylinder 
function $f$ on $X$  (that is, a function depending on a finite number of coordinates) 
by 
\begin{equation} \label{equa:genesubs1}
  \LL_1  f(\eta)  = \sum_{x  \in  \Z}  \sum_{a  \in \AA}\  c(a,\tau_x \eta)
  \big[f(\eta^x_a) - f(\eta)\big].
\end{equation}
 Let
\begin{align} \label{equa:CondSpinM}
  m &= \inf\{ c(a,\eta) \, : \, a \in \AA, \eta \in X \},\\ 
  \label{equa:CondSpinK}
  K &=  \sup\{ c(a,\eta) \,  : \, a \in  \AA, \eta \in  X \},\\
  \label{equa:CondSpinS}
  s &= \sup\{|S(a)|\, : \, a \in \AA\},
\end{align}
where $|S|$ denotes the cardinality of  the set $S$. Because 
the alphabet $\AA$ is finite, we have that
\begin{equation}\label{equa:K-s-finite}
s<+\infty,\quad  K<+\infty.
\end{equation}
This is a sufficient condition  for the  existence of a Markov 
process with pregenerator $\LL_1$  (see Theorem I.3.9 in \cite{ligg:IPS}).
 For Theorem \ref{theo:Ergo} 
and for our examples, we will  assume
\begin{equation} \label{equa:Mpositif}
  m >0,
\end{equation}
which is a standard assumption for  ergodicity results  (see e.g. \cite{Grayguide}, \cite{Grayrates}). 
\subsubsection{Cut-and-paste process} \label{sect:Trans}
 In this process,  not  only one
coordinate  changes  in   each  transition
  (as  it  can   be  seen  on
Figure~\ref{figu:InsDel}   where  the  transfer   of  nucleotide~$\tt$
induces a change of four coordinates  because of the shift to the left
for the segment $ \cc \gg \cc$). 
The transition mechanisms are circular
permutations $\sigma_{x,y}$ (for $x\not=y \in \Z$) of  finitely many  sites of $\Z$  and are specified  by a
transition probability matrix  $p$ on $\Z \times \Z$. \\  
For $\eta \in  X$  and  a pair of sites  $x\neq y$,  let
$\sigma_{x,y} (\eta)$ be the configuration  defined by 
\[
  \sigma_{x,y} (\eta)(z) = \eta \big[\sigma_{x,y}^{-1}(z) \big], \quad
  \forall z \in \Z, 
\]
where $\sigma_{x,y}$ is defined for any $x<y$ as
\[
  \sigma_{x,y}(z) = \left\{ \begin{array}{lll} z & \mbox{if} & z
      \notin \{ x, x+1, \dots, y \}, \\ y &  \mbox{if} & z=x, \\
    z-1 &  \mbox{if} & x < z \leq y, \end{array} \right. 
\]
and for any $x > y$ as
\[
  \sigma_{x,y}(z) = \left\{ \begin{array}{lll} z & \mbox{if} & z
      \notin \{ y, y+1, \dots, x \}, \\ y &  \mbox{if} & z=x, \\
    z+1 &  \mbox{if} & y \leq  z < x. \end{array} \right. 
\]
The case $x<y$  corresponds to a shift to the  left of the coordinates
from  site  $x+1$ to  site  $y$  as  it can  be  seen  on the  top  of
Figure~\ref{figu:Shift}, whereas $x>y$ corresponds to a shift
to the right of the coordinates from   $y$ to  $x-1$ as it can
be seen on the bottom of Figure~\ref{figu:Shift}. \\
\begin{figure}[ht]
\begin{center}
\includegraphics{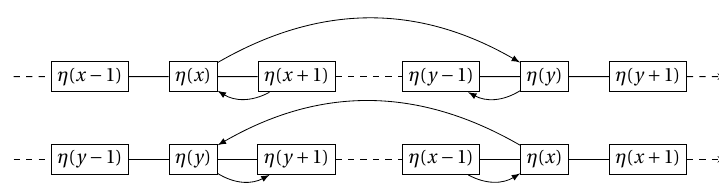}
\end{center}
\caption{Transfer of the coordinate $\eta(x)$  from site $x$ 
to site $y$ when $x<y$ (top panel) or when $x>y$ (bottom panel)} 
\label{figu:Shift}
\end{figure}
\noindent
The rate  at which $\eta$ changes  to $\sigma_{x,y}(\eta)$ is $p(x,y)$. 
We  assume   that  $p$  is translation invariant on $\Z$, that is, 
 $ p(x,y) = p(0,y-x)$  for all $x,y \in \Z$.
 We do not assume that
 $p$ is symmetric, hence the  ``rate of transfer'' $p(x,y)$ at  which  the  coordinate
$\eta(x)$  is   transferred  to  site~$y$  might depend
not only on the distance  $|y-x|$  but also on the direction of the transfer.\\ 
The pregenerator  $\LL_2$ of a
cut-and-paste process is defined on  a cylinder function  $f$ on  $X$ by 
\begin{equation} \label{equa:gene2}
  \LL_2 f (\eta) = \sum_{x,y \in \Z} p(x,y) \big[
  f(\sigma_{x,y}(\eta)) - f(\eta) \big].
\end{equation}
A sufficient condition for the existence of the cut-and-paste process, 
that we assume from now on,  is (see \cite{ligg:IPS}, \cite{andjelAl:LLNExclusion})
\begin{equation} \label{equa:TransCond2}
  \sum |x| p(0,x) < \infty.
\end{equation}
\begin{rema}\label{rk:kovchegov} 
(i) We
mentioned that $p$ is not necessarily symmetric
  because \cite{kovchegov} studies invariant measures of  particle
systems  that interact via  finite range  permutations, which  is a more
general system than a  cut-and-paste process, but with the restriction
that  the permutation  $\sigma$ has  the same  rate of  occurrence than
$\sigma^{-1}$,  which is  equivalent in  our case  to the  symmetry of
$p$. \\
Ergodicity of cellular automata that are superimpositions 
of Glauber dynamics and  permutations is
studied in \cite{ferrari:Automata}.  \\ 
(ii) For simplicity, we stick here to circular permutations. As it will become clear from the proofs, 
we could also consider more general permutations of 
$\eta(x), \eta(x+1), \ldots ,\eta(y)$ or $\eta(y), \eta(y+1), \ldots \eta(x)$, respectively, resulting in different biological interpretations than the one we gave in the introduction.\\
(iii) The cut-and-paste process generalizes a stirring process, which is such that
$p(.,.)$ is symmetric and nearest-neighbor, so that only two coordinates change in a transition.
\end{rema} 
\subsubsection{The superimposition}\label{sec:superimposed}
Fix a  constant $\rho > 0$ and define the pregenerator  $\LL$ on a
cylinder function $f$ on $X$ as 
\begin{equation} \label{equa:genesubs3}
\LL f = \LL_1 f+ \rho \LL_2 f .
\end{equation}
We  are  interested  in  the  ergodic properties  of  the  interacting
particle system with pregenerator  $\LL$, that is, the superimposition
of a substitution and a cut-and-paste process.  This superimposition is
well    defined    by    assumptions    \eqref{equa:K-s-finite}    and
\eqref{equa:TransCond2}.  We denote 
by  $\pp(X)$  the set  of probability measures on $X$,  
by $\SS$ the set of 
translation invariant probability
measures on $X$. For the superimposition we denote by $\II$ the set 
of invariant probability measures. 

Recall that (Definition I.1.9 in \cite{ligg:IPS}) ergodicity of a Markov process 
with values in $X=\AA^\Z$ means that there is exactly {\sl one} invariant 
probability  measure,  denoted e.g. by $\mu$,  and for each starting point, the law of the process 
converges to this invariant probability  measure.

 This ergodic  process is moreover \textit{exponentially ergodic}
(\cite{ferrari:ErgoSpinStirring}, page 1526) if for any bounded cylinder function $f$ on $X$,
there exist
positive constants $a_1 = a_1( f), a_2$ such that for any initial probability measure
$\nu$, for any $t\geq 0$, we have
\begin{equation}\label{def:exp-erg}
\left|\int fd(\nu S(t)) - \int fd\mu \right| < a_1 \ee^{-a_2 t}.
\end{equation} 

The simplest ergodic substitution processes are the
independent ones. As expected,  the superimposition
of a cut-and-paste  mechanism does not affect the  ergodic properties and
the invariant probability measure of such processes. 
In this case, we do not need assumption \eqref{equa:Mpositif}, and
moreover $s=1$ (see \eqref{equa:CondSpinS}). 
\begin{lemm} \label{theo:Inde}
Assume that the rate function $c(\cdot)$ can be written as
\begin{equation} \label{equa:IndeTaux}
  c(x,a,\eta) = q(\eta(x),a),
\end{equation}
where  $Q=(q(a,b))_{a,b \in  \AA}$ is the infinitesimal  generator of  an
irreducible  continuous-time   Markov  chain  on   $\AA$  with  unique
invariant probability measure $\pi$.
Then the process $(\eta_t)_{t\geq 0}$ with
generator  $\LL$ given  by~\eqref{equa:genesubs3} is  ergodic  and its
unique invariant probability measure is the product measure~$\pi^{\otimes \Z}$.
\end{lemm}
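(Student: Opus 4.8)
The plan is to first verify that the product measure $\pi^{\otimes\Z}$ is invariant for $\LL$, and then to establish convergence to it from an arbitrary initial law; together these give ergodicity (and in fact exponential ergodicity). For invariance one checks $\int \LL f\,d\pi^{\otimes\Z}=0$ for every cylinder function $f$. The substitution part is the classical fact that a product of identical stationary measures is invariant for a system of independent single-site chains with generator $Q$, so $\int \LL_1 f\,d\pi^{\otimes\Z}=0$. For the cut-and-paste part I would use that each $\sigma_{x,y}$ is a finite permutation of coordinates and that $\pi^{\otimes\Z}$, being a product of identical marginals, is invariant under coordinate permutations; hence $\int f(\sigma_{x,y}(\eta))\,d\pi^{\otimes\Z}(\eta)=\int f\,d\pi^{\otimes\Z}$ for every $x\neq y$, so each summand of $\LL_2 f$ integrates to $0$. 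The interchange of sum and integral is justified by \eqref{equa:TransCond2} and the boundedness of $f$.

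The heart of the argument is a labelled-particle representation that decouples the two dynamics. I would attach a distinct label to each site, so that initially particle $i$ sits at site $i$ carrying type $\eta_0(i)\in\AA$; the cut-and-paste events move labels (together with the types they carry) by the permutations $\sigma_{x,y}$, while a substitution at a site updates the type of whichever particle currently occupies it. Writing $\Phi_t(i)$ for the site of particle $i$ at time $t$ and $T^{(i)}_t$ for its type, the key point is that, since $c(x,a,\eta)=q(\eta(x),a)$ depends neither on the position nor on the neighbours, the type $T^{(i)}_t$ flips from $a$ to $b$ at rate $q(a,b)$ no matter where the particle sits. Thus the generator of $(T^{(i)}_t)_{i\in\Z}$ is a sum of independent copies of $Q$ and never references the positions, so $(T^{(i)}_t)_i$ is a family of independent $Q$-chains, independent of the label process $(\Phi_t)$ (driven solely by the cut-and-paste clocks). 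The original configuration is recovered as $\eta_t(x)=T^{(\Phi_t^{-1}(x))}_t$.

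With this representation, convergence follows from the ergodicity of the finite-state chain $Q$. Since $Q$ is irreducible on the finite alphabet $\AA$, the law of a $Q$-chain started anywhere converges to $\pi$ at a rate $C\ee^{-\gamma t}$ uniform in the starting point. Fixing a cylinder function $f$ depending on sites $x_1,\dots,x_k$, the values $\eta_t(x_1),\dots,\eta_t(x_k)$ are the types of $k$ distinct particles; conditioning on the label process (independent of the types) they are independent $Q$-chains, each converging to $\pi$, so the conditional joint law converges to $\pi^{\otimes k}$ uniformly in the labels. Averaging over the label process yields $|\int f\,d(\nu S(t))-\int f\,d\pi^{\otimes\Z}|\le a_1\ee^{-a_2 t}$ for every initial law $\nu$, which is exactly \eqref{def:exp-erg}; combined with invariance this proves ergodicity with $\mu=\pi^{\otimes\Z}$.

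I expect the main obstacle to be the rigorous justification that the type process is autonomous and decouples from the positions: one must argue at the level of the graphical construction that, although particles share the substitution clocks of the sites they visit, the flip rate of each particle's type depends only on that type, so that the generator of $(T^{(i)}_t)_i$ is exactly a sum of independent copies of $Q$, with the positions entering only through the independent cut-and-paste clocks. Once this decoupling is in place the remaining steps are routine, and the argument notably uses neither assumption \eqref{equa:Mpositif} nor any bound beyond irreducibility of $Q$, consistent with the statement that $m>0$ is not needed here.
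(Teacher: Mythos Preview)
Your argument is correct. The paper itself omits the proof of this lemma, calling it ``simple'', so there is no approach to compare against; your labelled-particle decoupling is a clean way to carry it out and even yields the stronger conclusion of exponential ergodicity. The one point worth making fully rigorous is exactly the one you flag: conditional on the cut-and-paste clocks, the particle trajectories $(\Phi_t(i))_t$ are deterministic and pairwise non-colliding (each $\Phi_t$ is a bijection), so distinct particles collect substitution marks from disjoint space--time regions of the independent substitution Poisson processes; hence the $(T^{(i)})_i$ are conditionally independent $Q$-chains whose joint law does not depend on the realisation of $\Phi$, giving the claimed independence from the label process. With that spelled out, everything else is routine.
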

We omit the simple proof of this lemma.
The examples of substitution processes we will work with from now on are not 
independent ones, they will satisfy $s>1$. 
\subsection{Examples of substitution models}\label{sec:Examples}
In this section we first define  our generic example, the RN+YpR model, that was
introduced and studied in \cite{piau:solv}, to which we refer for biological motivation. 
According to the values of its parameters, this mathematical model contains 
many known biological situations: 
We define more and more particular cases of it, the models RNc+YpR, T92+$\cc$p$\gg$, and 
JC+$\cc$p$\gg$.
\subsubsection{The RN+YpR model} \label{exam:RN+YpR}
First, RN stands for Rzhetsky-Nei  \cite{RN}  and means that the $ 4\times4 $ matrix of
substitution rates which characterize the independent evolution of the
sites must satisfy $4$ equalities, summarized as follows: for every
pair of neighboring nucleotides $ a $ and $ b \neq a $, the substitution rate from
$a$ to  $b$ may depend on $a$ but only through the fact that $a$  is a
purine ($\aa$ or $\gg$, symbol $R$) or a pyrimidine ($\cc$ or $\tt$, symbol
$Y$). For instance, the substitution rates from $\cc$ to $\aa$ and from
$\tt$ to $\aa$ must coincide,  as well as from $\aa$
and  from $\gg$ to $\cc$, from $\cc$  and from $\tt$
to $\gg$, and 
finally from $\aa$  and from $\gg$ to $\tt$. The $4$ remaining rates,
corresponding to purine-purine  
 and to pyrimidine-pyrimidine substitutions, are free. The 
 matrix  of  substitution rates is given by 
\[
  \bordermatrix{
      & \aa     &\tt      & \cc    &\gg      \cr
    \aa & \cdot & v_\tt   & v_\cc   & w_\gg   \cr
    \tt & v_\aa   & \cdot & w_\cc   & v_\gg   \cr
    \cc & v_\aa   & w_\tt   & \cdot & v_\gg   \cr
    \gg & w_\aa  & v_\tt & v_\cc & \cdot  \cr},  \quad \mbox{with}
  \quad 0 \leq v_a \leq w_a 
\quad \mbox{for all} \quad  a\in\AA. 
\]
Second, the influence mechanism is called YpR, which stands for the
fact that one allows any specific substitution rate between any two
YpR  dinucleotides ($\cc  \gg$, $\cc  \aa$, $\tt  \gg$ and  $\tt \aa$)
for a total of $8$ independent parameters. Hence, every
dinucleotide $\cc \gg$   moves to $\cc \aa$ at  rate $r_\aa^\cc$  and to
$\tt \gg$ at  rate $r_\tt^\gg$; every dinucleotide $\tt  \aa$ moves to
$\cc \aa$ at rate $r_\cc^\aa$ and to 
  $\tt \gg$ at rate $r_\gg^\tt$; every dinucleotide $\cc \aa$ moves to
  $\cc \gg$ at rate $r_\gg^\cc$  and to $\tt \aa$ at rate $r_\tt^\aa$;
  every dinucleotide $\tt \gg$  moves to $\cc \gg$ at rate~$r_\cc^\gg$
  and to $\tt \aa$ at rate $r_\aa^\tt$.  
We have 
\[
  S(\aa)=\{-1,0\} \quad \mbox{and} \quad
  c(\aa,\eta)= \left\{ 
  \begin{array}{ll}
  v_\aa & \mbox{if }  \eta(0) \in \{\cc,\tt\}=Y, \\
  w_\aa & \mbox{if } \eta(0) =  \gg \mbox{ and }
     \eta(-1) \notin \{\cc,\tt\}, \\ 
  w_\aa  + r_\aa^a  & \mbox{if } \eta(0)  = \gg
     \mbox{ and } \eta(-1)=a\in \{\cc,\tt\}, 
  \end{array}
  \right.
\]
\[
  S(\tt)=\{0,1\} \quad \mbox{and} \quad
  c(\tt,\eta)= \left\{ 
  \begin{array}{ll}
  v_\tt & \mbox{if }  \eta(0) \in \{\aa,\gg\}=R, \\
  w_\tt & \mbox{if } \eta(0) =  \cc \mbox{ and }
     \eta(1) \notin \{\aa,\gg\}, \\
      w_\tt  + r_\tt^a & \mbox{if  } \eta(0)  = \cc
     \mbox{ and } \eta(1)=a\in \{\aa,\gg\}, 
      \end{array}
  \right.
\]
\[
  S(\cc)=\{0,1\} \quad \mbox{and} \quad
  c(\cc,\eta)= \left\{
  \begin{array}{ll}
  v_\cc & \mbox{if }  \eta(0) \in \{\aa,\gg\}=R, \\
  w_\cc & \mbox{if } \eta(0) =  \tt \mbox{ and }
     \eta(1) \notin \{\aa,\gg\}, \\ 
  w_\cc  + r_\cc^a & \mbox{if } \eta(0)  = \tt
     \mbox{ and } \eta(1)=a\in \{\aa,\gg\},
   \end{array}
   \right. 
\]
\[
  S(\gg)=\{-1,0\} \quad \mbox{and} \quad
  c(\gg,\eta)= \left\{
  \begin{array}{ll}
  v_\gg & \mbox{if }  \eta(0) \in \{\cc,\tt\}=Y, \\
  w_\gg & \mbox{if } \eta(0) =  \aa \mbox{ and }
     \eta(-1) \notin \{\cc,\tt\}, \\ 
  w_\gg  + r_\gg^a & \mbox{if } \eta(0)  = \aa
     \mbox{ and } \eta(-1)=a\in \{\cc,\tt\}.
\end{array}
\right.
\]
\begin{equation}\label{equa:kms}
K=\max\big\{w_a+r_a^b,w_b+r_b^a \,: \, a \in  Y, b \in R \big\}, \quad
m=\min_{a\in \AA} v_a, \quad \mbox{and} \quad s=2. 
\end{equation}
 We assume from now on that \eqref{equa:Mpositif} is satisfied, 
that is $\min_{a\in \AA} v_a>0$. 
 \subsubsection{The RNc+YpR model} \label{exam:RNc+YpR}
In this model, see \cite{piau:solv} page 79, the substitution rates respect the ``strand
  complementarity of nucleotides'', so that the rates of YpR substitutions from $\cc \gg$ to $\cc \aa$
  and to $\tt \gg$ coincide,  from $\tt \aa$ to $\cc \aa$
  and to $\tt \gg$ coincide, from $\cc \aa$ and from $\tt \gg$ to $\cc \gg$ coincide, from $\cc \aa$ and from $\tt \gg$ to $\tt \aa$ coincide. Therefore 
\begin{eqnarray}\label{rates:RNc+YpR-1}
w_\aa=w_{\textsc w};\,v_\aa=v_{\textsc w} &;& r_\aa^\tt=r_{\textsc u};\,
r_\aa^\cc=r_{\textsc w},\\\label{rates:RNc+YpR-2}
w_\tt=w_{\textsc w};\,v_\tt=v_{\textsc w} &;& r_\tt^\aa=r_{\textsc u};\,r_\tt^\gg=r_{\textsc w},\\\label{rates:RNc+YpR-3}
w_\cc=w_{\textsc s};\,v_\cc=v_{\textsc s} &;& r_\cc^\aa=r_{\textsc s};\,r_\cc^\gg=r_{\textsc v},\\\label{rates:RNc+YpR-4}
w_\gg= w_{\textsc s};\,v_\gg=v_{\textsc s} &;& r_\gg^\tt=r_{\textsc s};\, r_\gg^\cc=r_{\textsc v}.
\end{eqnarray}
 \subsubsection{The T92+$\cc$p$\gg$ model}
 \label{exam:T92+CpG}
This model adds neighboring effects to the classical T92 model developed by Tamura in \cite{tamu:92},
 which consisted in an independent evolution of the sites. 
  The values of the above rates \eqref{rates:RNc+YpR-1}--\eqref{rates:RNc+YpR-4}  
  for the T92+$\cc$p$\gg$ model are,  for some $\theta\in[0,1]$, 
\begin{eqnarray}\label{rates:T92+CpG-1}
w_\aa=(1-\theta)w;\,v_\aa=(1-\theta)v &;& r_\aa^\tt=0;\,r_\aa^\cc=r,\\\label{rates:T92+CpG-2}
w_\tt=(1-\theta)w;\,v_\tt=(1-\theta)v &;& r_\tt^\aa=0;\,r_\tt^\gg=r,\\\label{rates:T92+CpG-3}
w_\cc=\theta w;\,v_\cc=\theta v &;& r_\cc^\aa=r_\cc^\gg=0,\\\label{rates:T92+CpG-4}
w_\gg=\theta w;\,v_\gg=\theta v &;& r_\gg^\tt=r_\gg^\cc=0. 
\end{eqnarray}
\subsubsection{The JC+$\cc$p$\gg$ model}
 \label{exam:JC+CpG}
Again this model adds neighboring effects to the JC model (see Section \ref{sec:intro}).
 The values of the above rates for the JC+$\cc$p$\gg$ model correspond 
 to the doubled values of the ones of the T92+$\cc$p$\gg$ model 
 for $\theta=1/2$: 
\begin{eqnarray}\label{rates:JC+CpG-1}
w_\aa=v_\aa=v &;& r_\aa^\tt=0;\,r_\aa^\cc=r,\\\label{rates:JC+CpG-2}
w_\tt=v_\tt=v &;& r_\tt^\aa=0;\,r_\tt^\gg=r,\\\label{rates:JC+CpG-3}
w_\cc=v_\cc=v &;& r_\cc^\aa=r_\cc^\gg=0,\\\label{rates:JC+CpG-4}
w_\gg=v_\gg=v &;& r_\gg^\tt=r_\gg^\cc=0.
\end{eqnarray}
\subsection{Ergodicity results for these substitution models 
with additional cut-and-paste mechanism}\label{sec:statements-erg-examples}
Assuming that $\min_{a     \in     \AA}     v_a>0$ 
 (that is, \eqref{equa:Mpositif}),  
\cite{piau:solv}   proved   that   the   RN+YpR   model, 
and as a consequence  the RNc+YpR, T92+$\cc$p$\gg$ and 
JC+$\cc$p$\gg$  models are ergodic for all     
substitution rates. 
As pointed out in  \cite[Theorem 6]{piau:solv}, considering only the evolution
of $Y$ and $R$ 
instead of the one of the four elements
of $\AA$ gives that the (only) invariant probability measure is a product measure.\\
Our main results  for these examples are the following.
\begin{theo}\label{th:erg-RN+YpR} 
For any $\rho > 0$,  the RN+YpR model with cut-and-paste mechanism  
is exponentially ergodic (recall \eqref{def:exp-erg}) if  
\begin{equation} \label{equa:RNergo}
  \min_{a     \in     \AA}     v_a>0    \quad     \mbox{and}     \quad
  \max\big(\YY\cup\RR   \big)  
  <  \sum_{a\in \AA} v_a, 
 \end{equation} 
 where
 \begin{eqnarray}\label{def:calY}
 \YY&=& \{ r_a^\aa\vee r_a^\gg-r_a^\aa\wedge r_a^\gg,\,r_a^\aa\wedge r_a^\gg\,:\, a \in  Y\},\\ \label{def:calR}
 \RR&=& \{  r_b^\cc\vee r_b^\tt-r_b^\cc\wedge r_b^\tt,r_b^\cc\wedge r_b^\tt\,:\, b \in  R \}.
 \end{eqnarray}  
 If the second inequality in \eqref{equa:RNergo} is an equality, then
the RN+YpR model with cut-and-paste mechanism  is ergodic. 
\end{theo}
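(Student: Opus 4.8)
The plan is to deduce Theorem~\ref{th:erg-RN+YpR} from the two general duality-based results, Theorems~\ref{theo:Ergo} and~\ref{theo:Ergo2}, by checking that the RN+YpR rates meet their hypotheses. The guiding idea is the generalized duality of Section~\ref{sect:results-ergo_duality}: one builds a dual process on finite subsets of $\Z$ in which the substitution mechanism produces \emph{branching} of dual sites (whenever a flip rate genuinely depends on a neighbour) together with \emph{death} of dual sites (coming from the positive rates, here $\min_{a\in\AA} v_a>0$), while the cut-and-paste mechanism only makes the dual sites perform bounded-range random walks. Since the permutations neither create nor destroy dual sites, the branching/death balance is governed by the substitution rates alone, which is precisely why condition~\eqref{equa:RNergo} is independent of $\rho$ and holds for every $\rho>0$. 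First I would record the RN structure: the projected purine/pyrimidine ($R$/$Y$) process is a system of independent two-state spins (the class-flip rates $v_\aa+v_\gg$ and $v_\cc+v_\tt$ do not see the neighbours), ergodic with product invariant law even under cut-and-paste, as in~\cite{piau:solv}. This lets me treat the remaining ``colour'' degrees of freedom ($\aa$ versus $\gg$ inside $R$, $\cc$ versus $\tt$ inside $Y$) against an already ergodic $R$/$Y$ backbone.

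Second, I would carry out the parameter identification that turns the abstract subcriticality hypothesis into~\eqref{equa:RNergo}. Every time a site changes $R$/$Y$ class its colour is reset to the background law, and these class-flip rates contribute the death rate of a dual colour-site, of the order $\sum_{a\in\AA}v_a$. The YpR interaction is what makes a colour flip depend on the neighbouring colour, and its strength is measured exactly by $\YY$ and $\RR$: for a target $a\in Y$ the context-independent minimum $r_a^\aa\wedge r_a^\gg$ and the context spread $r_a^\aa\vee r_a^\gg-r_a^\aa\wedge r_a^\gg$ each bound a distinct branching rate of the dual, and symmetrically for $a\in R$, which is why the relevant branching intensity is $\max(\YY\cup\RR)$. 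Bounding the branching rate by $\max(\YY\cup\RR)$ and the death rate from below by $\sum_{a\in\AA}v_a$, the strict inequality in~\eqref{equa:RNergo} is precisely the subcriticality required by Theorem~\ref{theo:Ergo}, whence the dual dies out exponentially fast and the process is exponentially ergodic in the sense of~\eqref{def:exp-erg}.

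For the final statement, the equality case $\max(\YY\cup\RR)=\sum_{a\in\AA}v_a$, subcriticality fails by exactly the boundary, so the dual branching process is \emph{critical}. Here I would invoke Theorem~\ref{theo:Ergo2}, which is tailored to this borderline regime: its hypothesis permits equality of branching and death rates and yields almost-sure extinction of the dual—using that a critical branching (and coalescing) random walk in dimension one still dies out, albeit with only polynomial tails—and hence ergodicity, that is a single invariant measure with convergence but no guaranteed exponential speed. The main obstacle is twofold. First, obtaining the \emph{sharp} threshold $\max(\YY\cup\RR)$ rather than a crude bound in terms of $K,m,s$ from~\eqref{equa:CondSpinM}--\eqref{equa:CondSpinS}: this forces the split of each YpR rate into its minimal part $r_a^\aa\wedge r_a^\gg$ and its spread, together with a coupling that charges only the spread as genuine colour-branching while attributing the rest to the controlled $R$/$Y$ backbone. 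Second, the critical case itself: without a spectral gap no exponential decay is available, so everything rests on the almost-sure extinction supplied by Theorem~\ref{theo:Ergo2}, and the delicate point is to verify that equality in~\eqref{equa:RNergo} translates into exact criticality—and not supercriticality—of the dual.
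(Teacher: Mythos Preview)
Your proposal has the right heuristic picture but misidentifies which general theorem does what, and this gap is essential.

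You claim that applying Theorem~\ref{theo:Ergo} directly to the RN+YpR rates yields the branching bound $\max(\YY\cup\RR)$ and hence condition~\eqref{equa:RNergo}. It does not. For the undecomposed generator one has $s=2$ and, from the rate structure (see Table~\ref{tabl:RN+YpR}),
\[
\blambda=\max\bigl(\{w_a-v_a:a\in\AA\}\cup\YY\cup\RR\bigr),\qquad \blambda_0=\sum_{a\in\AA}v_a,
\]
so Theorem~\ref{theo:Ergo} gives only the strictly weaker condition~\eqref{equa:ErgoFin-IV}, in which the $w_a-v_a$ terms still appear. Your intuition that the within-class jumps (governed by the $w_a$'s) should not contribute to branching is correct, but turning it into a proof requires the \emph{decomposition} $\LL_1=\LL_1^{(1)}+\LL_1^{(2)}$ of Theorem~\ref{theo:Ergo2}: put the eight YpR rates $r_\cdot^\cdot$ into $\LL_1^{(1)}$ (range $s^{(1)}=2$, $m^{(1)}=0$) and the RN rates $v_\cdot,w_\cdot$ into $\LL_1^{(2)}$ (range $s^{(2)}=1$). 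Because $s^{(2)}-1=0$, the $w$'s drop out of~\eqref{equa:ErgoFin2}, leaving exactly $\blambda^{(1)}=\max(\YY\cup\RR)<\sum_{a\in\AA}v_a=\blambda_{0,d}$, which is~\eqref{equa:RNergo}.

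Consequently your description of Theorem~\ref{theo:Ergo2} as ``tailored to the borderline regime'' is a misreading: it is not a critical-case result but the decomposition result, and it is what delivers the sharp strict inequality as well as the equality clause. Both Theorems~\ref{theo:Ergo} and~\ref{theo:Ergo2} already contain the equality statement (ergodicity without exponential rate) as part of their conclusions; you do not need a separate argument about critical branching random walk in dimension one, nor the $R/Y$-backbone heuristic, to handle equality. The fix is simply to apply Theorem~\ref{theo:Ergo2} with the decomposition above for both the strict and the equality cases.
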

We will prove Theorem \ref{th:erg-RN+YpR}  in Section \ref{sect:appl-duality_RN+YpR}, 
using the duality technique 
introduced and developed in Subsections \ref{sect:first-ergo_duality} and 
\ref{sect:second-ergo_duality}  for general substitution processes
with cut-and-paste mechanism. 
 The next corollary is a direct application
of Theorem \ref{th:erg-RN+YpR}.
\begin{coro}\label{cor:dual-result-RNc+YpR}
Assume that the rates of the RNc+YpR model 
(defined in \eqref{rates:RNc+YpR-1}--\eqref{rates:RNc+YpR-4}) satisfy 
\begin{eqnarray}
 \max\big(r_{\textsc u}\vee r_{\textsc w}-r_{\textsc u}\wedge r_{\textsc w},\, 
 r_{\textsc u}\wedge r_{\textsc w},\,
 r_{\textsc s}\vee r_{\textsc v}-r_{\textsc s}\wedge r_{\textsc v},\, 
 r_{\textsc s}\wedge r_{\textsc v} \big)
 \leq 2 (v_{\textsc s}+v_{\textsc w}). \label{eq:dual-rates-RNc+YpR}
\end{eqnarray}
Then, for any $\rho > 0$,
the RNc+YpR model  with cut-and-paste mechanism is ergodic.
\end{coro}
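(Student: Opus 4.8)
The plan is to read off the corollary as the specialization of Theorem~\ref{th:erg-RN+YpR} to the RNc+YpR rates, so no new analytic input is needed. First I would substitute the strand-complementarity constraints \eqref{rates:RNc+YpR-1}--\eqref{rates:RNc+YpR-4} into the definitions \eqref{def:calY}--\eqref{def:calR} of $\YY$ and $\RR$. Here $Y=\{\cc,\tt\}$ and $R=\{\aa,\gg\}$, so for $a\in Y$ the relevant rates towards the two purines are the unordered pairs $(r_\cc^\aa,r_\cc^\gg)=(r_{\textsc s},r_{\textsc v})$ and $(r_\tt^\aa,r_\tt^\gg)=(r_{\textsc u},r_{\textsc w})$, while for $b\in R$ the relevant rates towards the two pyrimidines are $(r_\aa^\cc,r_\aa^\tt)=(r_{\textsc w},r_{\textsc u})$ and $(r_\gg^\cc,r_\gg^\tt)=(r_{\textsc v},r_{\textsc s})$. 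Since $\vee$ and $\wedge$ are symmetric, the crucial observation is that these two families of unordered pairs coincide, so that $\YY$ and $\RR$ reduce to one and the same four-element set
\[
\big\{\, r_{\textsc u}\vee r_{\textsc w}-r_{\textsc u}\wedge r_{\textsc w},\; r_{\textsc u}\wedge r_{\textsc w},\; r_{\textsc s}\vee r_{\textsc v}-r_{\textsc s}\wedge r_{\textsc v},\; r_{\textsc s}\wedge r_{\textsc v}\,\big\}.
\]
In particular $\max(\YY\cup\RR)$ equals exactly the left-hand side of \eqref{eq:dual-rates-RNc+YpR}.

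Next I would evaluate the right-hand side of \eqref{equa:RNergo}. From $v_\aa=v_\tt=v_{\textsc w}$ and $v_\cc=v_\gg=v_{\textsc s}$ one gets $\sum_{a\in\AA}v_a=2(v_{\textsc s}+v_{\textsc w})$, which is precisely the right-hand side of \eqref{eq:dual-rates-RNc+YpR}. Hence the hypothesis \eqref{eq:dual-rates-RNc+YpR} is nothing but the non-strict version of the second inequality in \eqref{equa:RNergo}. Since $\min_{a\in\AA}v_a>0$ is the standing assumption imposed on the RN+YpR model right after \eqref{equa:kms}, both requirements of Theorem~\ref{th:erg-RN+YpR} are met: when \eqref{eq:dual-rates-RNc+YpR} holds strictly the theorem yields exponential ergodicity, and when it holds with equality it yields ergodicity; in either case the RNc+YpR model with cut-and-paste mechanism is ergodic for every $\rho>0$, which is the claim.

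The statement has essentially no hard part, since everything analytic is already encapsulated in Theorem~\ref{th:erg-RN+YpR}; the only place to be careful is the combinatorial matching. The main point I would double-check is that, under \eqref{rates:RNc+YpR-1}--\eqref{rates:RNc+YpR-4}, the two unordered pairs of rates arising from the purine targets (for $a\in Y$) genuinely coincide with those arising from the pyrimidine targets (for $b\in R$). This coincidence is what collapses $\max(\YY\cup\RR)$ into the single four-term maximum appearing in \eqref{eq:dual-rates-RNc+YpR}, and it is what lets the strict-versus-equality dichotomy of Theorem~\ref{th:erg-RN+YpR} transfer verbatim to the RNc+YpR case.
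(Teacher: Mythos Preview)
Your proposal is correct and matches the paper's approach exactly: the paper simply states that the corollary is ``a direct application of Theorem~\ref{th:erg-RN+YpR}'' and gives no further proof, so your explicit verification that the RNc+YpR constraints collapse $\YY\cup\RR$ to the four-term set and that $\sum_{a\in\AA}v_a=2(v_{\textsc s}+v_{\textsc w})$ is precisely the intended (and only) argument.
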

The attractiveness and coupling techniques will be
introduced and developed in Subsection \ref{sect:first-AttrErg}  
for general substitution processes with cut-and-paste mechanism, 
and specialized in Subsection \ref{sec:ErgoAttrRN+YpR} to the RN+YpR model.
We will prove Theorem \ref{th:ergo-first-exa}    
in Section 
\ref{sec:ErgoAttrRN+YpR}. 
\begin{theo}\label{th:ergo-first-exa} 
For any $\rho > 0$, we have the following.
\begin{itemize}
\item[(i)] The T92+$\cc$p$\gg$ model, and as a consequence the JC+$\cc$p$\gg$ model,
both  with cut-and-paste mechanism, 
are ergodic for all substitution rates.
\item[(ii)]   Assume that the rates of the RNc+YpR model 
(defined in \eqref{rates:RNc+YpR-1}--\eqref{rates:RNc+YpR-4}) satisfy 
\mbox{attractiveness conditions}, namely
\begin{eqnarray}\label{eq:attra-rates-RNc+YpR-1}
\mbox{either}&&
r_{\textsc u}\leq r_{\textsc w};\,r_{\textsc s}=r_{\textsc v}=0, \\ \label{eq:attra-rates-RNc+YpR-2}
 \mbox{or}&& r_{\textsc s}\leq r_{\textsc v};\,r_{\textsc u}=r_{\textsc w}=0. \label{eq:attra2-rates-RNc+YpR}
\end{eqnarray}
Then, the RNc+YpR model  with cut-and-paste mechanism is ergodic.
\end{itemize}
\end{theo}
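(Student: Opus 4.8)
It suffices to prove (ii), and within (ii) to treat \eqref{eq:attra-rates-RNc+YpR-1}. Indeed, reading the T92+$\cc$p$\gg$ rates \eqref{rates:T92+CpG-1}--\eqref{rates:T92+CpG-4} (and the JC+$\cc$p$\gg$ rates \eqref{rates:JC+CpG-1}--\eqref{rates:JC+CpG-4}) in the RNc+YpR parametrisation \eqref{rates:RNc+YpR-1}--\eqref{rates:RNc+YpR-4} yields $r_{\textsc u}=r_{\textsc s}=r_{\textsc v}=0$ and $r_{\textsc w}=r\geq 0$, so both models \emph{are} RNc+YpR models satisfying \eqref{eq:attra-rates-RNc+YpR-1}; hence (i) is exactly the instance of (ii) for these parameters. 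Finally \eqref{eq:attra-rates-RNc+YpR-2} is obtained from \eqref{eq:attra-rates-RNc+YpR-1} by the strand-complementarity symmetry exchanging $Y$ and $R$ together with $(v_{\textsc w},r_{\textsc u},r_{\textsc w})\leftrightarrow(v_{\textsc s},r_{\textsc s},r_{\textsc v})$, under which the generator is carried to itself, so I would argue only under \eqref{eq:attra-rates-RNc+YpR-1}.

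\textbf{From attractiveness to a coupling.} Under \eqref{eq:attra-rates-RNc+YpR-1} the substitution rates are monotone for the total order on $\AA$ built in Subsection \ref{sec:ErgoAttrRN+YpR} --- the inequality $r_{\textsc u}\leq r_{\textsc w}$ being precisely what makes the $\cc$p$\gg$ moves monotone --- and that order separates $Y=\{\cc,\tt\}$ from $R=\{\aa,\gg\}$, so that the $Y/R$ type is monotone. Since coupling the two copies with the \emph{same} permutation $\sigma_{x,y}$ preserves the coordinatewise order, the cut-and-paste part $\rho\LL_2$ is automatically monotone, hence $\LL$ is attractive. Attractiveness provides extremal invariant measures $\underline\mu\preceq\bar\mu$ in $\SS$, obtained from the dynamics started at the bottom and top configurations, and the process is ergodic iff $\underline\mu=\bar\mu$. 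I would run the basic coupling $(\eta_t,\zeta_t)$ from these two configurations, which keeps $\eta_t\leq\zeta_t$, and show that the discrepancy set $\{x:\eta_t(x)\neq\zeta_t(x)\}$ empties out.

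\textbf{The $Y/R$ projection.} By the Rzhetsky--Nei constraint, a site changes its $Y/R$ type at rate $v_{\textsc s}+v_{\textsc w}$ in each direction, independently of the rest of the configuration, while every $\cc$p$\gg$ move keeps the type fixed. Thus the $Y/R$-projection is an independent two-state flip superimposed with cut-and-paste, which is ergodic with product invariant law by Lemma \ref{theo:Inde}. Consequently $\underline\mu$ and $\bar\mu$ share the same $Y/R$-marginal; as the order makes the $Y/R$ type monotone, the stationary coupling satisfies $Y/R(\eta)\leq Y/R(\zeta)$ coordinatewise with equal marginals, which forces $Y/R(\eta)=Y/R(\zeta)$ almost surely. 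Only within-class discrepancies then remain, of the two types $\cc/\tt$ on $Y$-sites and $\aa/\gg$ on $R$-sites, and both copies henceforth carry a common $Y/R$ configuration.

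\textbf{Within-class coalescence --- the crux.} On a site with a common $Y/R$ type the constant-rate substitutions act in the coupling as \emph{coalescing resets}: each cross-class $v$-clock sends both copies to the same letter, so every within-class discrepancy is erased at rate at least $v_{\textsc s}+v_{\textsc w}>0$ (using \eqref{equa:Mpositif}). The only moves that can \emph{create} a within-class discrepancy are the two context-dependent $\cc$p$\gg$ moves, and only next to an existing discrepancy of the other class; the induced birth rate is at most $r_{\textsc w}-r_{\textsc u}$. The obstacle is that this bound is unbounded in $r_{\textsc w}$, so a crude subcriticality comparison fails. The crucial point is that the local configuration enabling a birth places a $\cc$ next to the triggering within-$R$ discrepancy (resp.\ a $\gg$ next to the triggering within-$Y$ discrepancy), and this is exactly the configuration in which that discrepancy is itself erased, by its own coalescing $\cc$p$\gg$ move, at the larger rate $w_{\textsc w}+r_{\textsc w}\geq r_{\textsc w}-r_{\textsc u}$. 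Hence the total creation rate never exceeds the destruction coming from these same $\cc$p$\gg$ moves, while the resets add a destruction of rate $v_{\textsc s}+v_{\textsc w}$ per discrepancy. I would package this into the bound $\tfrac{d}{dt}\E[N_t]\leq-(v_{\textsc s}+v_{\textsc w})\,\E[N_t]$ for the number $N_t$ of discrepancies (cut-and-paste only relocates them, hence is neutral for $N_t$), so that $N_t\to 0$; equivalently, in the stationary coupling the balance forces the within-class discrepancy density to vanish. This gives $\underline\mu=\bar\mu$ and thus ergodicity. Making this correlated birth--death balance rigorous, uniformly in $\rho$ and in the unbounded range of the cut-and-paste, is where the real work lies.
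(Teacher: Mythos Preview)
Your overall architecture---attractiveness, reduction to the stationary coupling of $\underline\mu$ and $\bar\mu$, elimination of cross-class $(Y,R)$ discrepancies via the $Y/R$ projection, then a within-class balance---is exactly the route the paper takes in Section~\ref{sect:results-AttrErg}. Your reduction of (i) to (ii) is legitimate (the paper also observes it, in Remark~\ref{rk:again-examples-ok}), and your $Y/R$ step is essentially Proposition~\ref{prop:nu-some-coupl-0}.

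The gap is in the last step, and you flag it yourself. Two concrete issues. First, $N_t=\sum_x\1_{\{\eta_t(x)\neq\zeta_t(x)\}}$ is a.s.\ infinite when you start from the all-$\cc$ and all-$\gg$ configurations, so the inequality $\tfrac{d}{dt}\E[N_t]\le -(v_{\textsc s}+v_{\textsc w})\E[N_t]$ is not meaningful; you must work with the density $\nu(\eta(0)\neq\xi(0))$ under the \emph{stationary} coupling $\nu$. Second, and more substantively, your ``the total creation rate never exceeds the destruction coming from these same $\cc$p$\gg$ moves'' compares a birth at one site with a death at a \emph{neighbouring} site; these are different events, and summing their rates does not by itself bound anything. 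What is needed is a pair of exact identities obtained by applying the coupled generator $\overline\LL_1$ to $\1_{\{(\eta(0),\xi(0))=(\aa,\gg)\}}$ and to $\1_{\{(\eta(0),\xi(0))=(\cc,\tt)\}}$ and integrating against $\nu$ (the $\overline\LL_2$ part vanishes by translation invariance). This is precisely Proposition~\ref{prop:nu-diag}: adding the two identities produces, under \eqref{eq:attra-rates-RNc+YpR-1}, only nonpositive terms on the right-hand side---the coefficients $(r_\tt^\gg-r_\tt^\aa)-r_\aa^\cc$ and $(r_\aa^\cc-r_\aa^\tt)-r_\tt^\gg$ both equal $-r_{\textsc u}\le 0$---forcing $\nu((\aa,\gg))=\nu((\cc,\tt))=0$. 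That computation \emph{is} the ``correlated birth--death balance'' you describe, made rigorous; without it your argument is only a heuristic.

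One further remark: handling \eqref{eq:attra-rates-RNc+YpR-2} by the strand-complementarity symmetry is delicate, because that involution does not preserve the order \eqref{def:order-on-AA_ctag}. The paper instead switches to a different total order on $\AA$ (see Section~\ref{subsec:other-order}) under which \eqref{eq:attra-rates-RNc+YpR-2} becomes the attractiveness condition, and then reruns the same generator computation.
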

 More precisely, we will derive various sets of 
conditions for the substitution rates of the processes which imply ergodicity 
(Propositions \ref{prop:mu-a-t-c-g}, \ref{prop:mu-a-t_or_c-g}, \ref{prop:nu-diag}), 
and we will apply them to derive Theorem \ref{th:ergo-first-exa}.  
%
%
\section{Ergodicity through generalized duality} \label{sect:results-ergo_duality}
The starting point of this approach is a graphical construction of the dynamics,
using a Harris representation \cite{harris},  done in Section \ref{sect:Graph}.
To state our results in Section \ref{sect:second-ergo_duality}, we have to introduce 
the necessary notation in Section \ref{sect:first-ergo_duality}. 
\subsection{Set-up}\label{sect:first-ergo_duality} 
In the pregenerator $\LL_1$ defined by~\eqref{equa:genesubs1}, 
for each $a \in \AA$, write the substitution
rate $c(a,\eta)$ depending on the finite set $S(a) \subset \Z$ as
\begin{equation} \label{equa:SpinFlipRate}
  c(a,\eta)  =  \sum_{j \in  J(a)}  \lambda_j(a)  \1_{\{ \eta  \in
  A_j(a) \}}, 
\end{equation}  
where  $\lambda_{j+1}(a) \geq  \lambda_j(a)$ and  $A_j(a)$  are
cylinder sets of $X$ depending on $S_j(a)\subseteq S(a)$ such that the family 
$\{A_j(a)\}_{j \in J(a)}$ is a partition of $X$.  Thus,
\begin{equation} \label{equa:partition}
  \sum_{j \in J(a)} \1_{\{ \eta \in A_j(a)\}} = 1.
\end{equation}
By convention, the first label in the set $J(a) \subset
\N$ is $0$. The number of elements of 
$J(a)$  is uniformly bounded by  $s^{|\AA|}$, where $s$ was defined
by~\eqref{equa:CondSpinS}.   Indeed, since the  rate $c(a,\eta)$
depends at most on $s$ sites, there are at most 
$s^{|\AA|}$  different cylinder  sets $A_j(a)$.  
We set   
\begin{eqnarray} \label{equa:blambda}
    \blambda_j(a)&=&  \lambda_j(a)  -   \lambda_{j-1}(a)  \quad
  \mbox{for}\,\,  j  \in J(a)\setminus\{0\}, \,\,
  \mbox{and} \quad \blambda_{0}(a)=\lambda_{0}(a),  
\\ \label{equa:Lambda}
  \lambda (a)&=&\max\big\{\lambda_j(a) \, : \, j \in J(a) \big\} -
  \lambda_0(a) = \lambda _{|J(a)| -1 }- \lambda_0(a), 
\end{eqnarray}
and
\begin{equation} 
\label{equa:bLambda0}
   \blambda_0 = \sum_{a  \in  \AA} \blambda_0(a).
\end{equation}
We  first assume~\eqref{equa:Mpositif}; then  the  latter
quantity is positive since $ \blambda_0(a) \geq m$. Finally, set
 \begin{equation} \label{equa:branch_sup-sup}
  \blambda =  \max_{a\in \AA}  \max_{j \in  J(a)
    \setminus \{0\}}  \blambda_j (a).
\end{equation} 
\subsection{Results}\label{sect:second-ergo_duality}
\begin{theo}\label{theo:Ergo}
Assume \eqref{equa:Mpositif} and that 
\begin{equation}\label{equa:ErgoFin}
  (s-1)  \blambda <  \blambda_0.
\end{equation}
Then, 
for any $\rho > 0$, the process $(\eta_t)_{t\geq 0}$ with
generator  $\LL$   given  by~\eqref{equa:genesubs3}  is  exponentially
ergodic.  If $(s-1) \overline \lambda = \overline \lambda_0$  then  the
process is ergodic.  
\end{theo}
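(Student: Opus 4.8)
The plan is to prove exponential ergodicity by the duality-with-branching method of \cite{ferrari:ErgoSpinStirring}, adapted to our finite alphabet and to the cut-and-paste moves, and to treat the boundary case $(s-1)\blambda=\blambda_0$ separately. First I would use the graphical (Harris) construction of Section~\ref{sect:Graph}, which realises the generator \eqref{equa:genesubs3} from independent Poisson clocks built on the layered decomposition \eqref{equa:SpinFlipRate}: at each site $x$ and each target $a$ an \emph{unconditional} clock of rate $\lambda_0(a)$ that always resets $\eta(x)$ to $a$, and for each $j\in J(a)\setminus\{0\}$ a \emph{conditional} clock of rate $\blambda_j(a)$ that resets $\eta(x)$ to $a$ only when $\eta$ belongs to the associated cylinder set, which depends on the at most $s$ coordinates $S_j(a)$; superimposed on these, for every pair $x\neq y$ a clock of rate $\rho\,p(x,y)$ that applies $\sigma_{x,y}$.

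Fix a space-time point $(x_0,t)$ and, reading the clocks backwards from $t$, define the dual set $\xi_u\subseteq\Z$, $0\leq u\leq t$, of sites whose values at time $t-u$ are needed to determine $\eta_t(x_0)$. Its transitions are: a cut-and-paste clock meeting $\xi$ merely relocates the affected lineages via $\sigma_{x,y}^{-1}$, leaving $|\xi|$ unchanged; an unconditional clock at a site of $\xi$ removes that site, whose coordinate is then fixed by the clock alone; and a conditional clock of type $(a,j)$ at a site $y\in\xi$ forces the adjunction of the coordinates $S_j(a)+y$ in order to decide whether the reset fires, which, as $y\in\xi$ already, adds at most $s-1$ new sites. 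By construction $\eta_t(x_0)$ is a deterministic function of the clocks in $[t-u,t]$ and of $\eta_{t-u}$ restricted to $\xi_u$; on the event that $\xi_u=\varnothing$ for some $u\leq t$ it does not depend on the initial configuration at all.

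The heart of the argument is to dominate $(|\xi_u|)_u$ by a continuous-time branching random walk $(\zeta_u)_u$ whose particles are killed at rate $\blambda_0$ (see \eqref{equa:bLambda0}) and branch, producing at most $s-1$ offspring, at a rate bounded by the single maximal increment $\blambda$ of \eqref{equa:branch_sup-sup}, the displacements being governed by $p$ and hence of finite mean by \eqref{equa:TransCond2}. Its population size is then an autonomous branching process with Malthusian parameter at most $(s-1)\blambda-\blambda_0$, so \eqref{equa:ErgoFin} makes it subcritical and $\E|\xi_u|\leq\E|\zeta_u|\leq\ee^{-(\blambda_0-(s-1)\blambda)u}$. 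Coupling any two initial configurations through the same clocks, they agree at $(x_0,t)$ whenever the dual dies out before time $0$, so $\PP(\eta_t(x_0)\neq\eta'_t(x_0))\leq\PP(\xi_t\neq\varnothing)\leq\ee^{-(\blambda_0-(s-1)\blambda)t}$; summing over the finitely many coordinates on which a cylinder function $f$ depends yields \eqref{def:exp-erg} uniformly in the initial law, hence exponential ergodicity and $\II=\{\mu\}$. In the boundary case $(s-1)\blambda=\blambda_0$ the dominating population is a critical branching process, which dies out almost surely even though $\E|\zeta_u|$ no longer decays; thus $\PP(\xi_t\neq\varnothing)\to0$ as $t\to\infty$ without a rate, and the same coupling gives plain ergodicity.

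The main obstacle is precisely this domination: showing that the effective per-lineage branching is governed by the maximal increment $\blambda$, and not by the cumulative conditional rate $\sum_{a}\sum_{j\geq1}\blambda_j(a)$, so that the threshold is exactly \eqref{equa:ErgoFin}. This is where the ordering $\lambda_{j+1}(a)\geq\lambda_j(a)$ and the nested structure of the cylinder sets in \eqref{equa:SpinFlipRate} must be used, via a competition argument among the backward clocks in which only the most recent firing at a site is relevant, together with the bookkeeping that assigns to each branching at most $s-1$ descendants. A secondary, more technical point is to verify, using \eqref{equa:TransCond2}, that the dual and its dominating branching random walk are well defined despite the unbounded range of the cut-and-paste (no accumulation of moves in finite time) and that the spatial spreading of the lineages is immaterial to extinction, the population count being independent of the walk.
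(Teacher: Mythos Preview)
Your approach is essentially the paper's: graphical construction, reverse-time dual set, domination of its cardinality by a continuous-time Galton--Watson process, then subcriticality under \eqref{equa:ErgoFin} and criticality (hence almost-sure extinction without rate) in the boundary case. The paper formulates the dominating process as ``each branch dies at rate $\blambda+\blambda_0$ and is replaced by $s$ branches with probability $\blambda/(\blambda+\blambda_0)$ or by $0$ otherwise'', while you phrase it as ``death at rate $\blambda_0$, branching producing at most $s-1$ offspring at rate $\blambda$''; these yield the same Malthusian parameter $(s-1)\blambda-\blambda_0$ and the same threshold.

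Where you diverge from the paper is in what you call ``the main obstacle''. You flag that the total conditional-clock rate at a site is $\sum_a\sum_{j\geq1}\blambda_j(a)$, not $\blambda$, and propose to close the gap via a ``competition argument'' using the nested cylinder structure and the idea that only the most recent firing is relevant. The paper does \emph{not} argue this way: it simply bounds each individual clock type by $\blambda_j(a)\leq\blambda$, $s_j(a)\leq s$, and asserts that ``a branch is replaced by $s$ branches at rate $\blambda$'', i.e.\ it takes the domination as immediate from the graphical construction in Section~\ref{sect:SubsGraphRepr} and the dual in Section~\ref{sect:GeneDualProc}, following \cite{ferrari:ErgoSpinStirring}. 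Your nested/competition mechanism is not part of the paper's proof and, as sketched, is not obviously correct: in the dual, a site $x$ remains in $D_{\widehat u}$ after a $(j,a)$-mark, so several conditional marks with different $(j,a)$ can hit $x$ before a $\delta$-mark, and the added neighbourhoods need not coincide across targets $a$. If you want to follow the paper, you should drop the competition argument and present the domination as the paper does; if you want to make the stronger claim that only one branching event per lineage ``counts'', that would require a genuinely new argument beyond what the paper provides.

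A minor point: the paper does not carry the spatial displacements through a branching random walk as you do---since cut-and-paste marks preserve $|D_{\widehat u}|$, it works directly with the population size, so condition~\eqref{equa:TransCond2} enters only in the well-posedness of the graphical construction, not in the extinction estimate.
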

 \begin{rema} \label{rema:assu-1}
(i) 
  A stronger condition for exponential ergodicity than 
\eqref{equa:ErgoFin} is that $m,K,s$          
(given by~\eqref{equa:CondSpinM}--\eqref{equa:CondSpinS})  satisfy
\begin{equation} \label{equa:Ergo}
  m >0 \quad \mbox{and} \quad (s-1)(K-m) < |\AA|m.
\end{equation}
This last inequality is the natural extension of the one in 
 \cite[Theorem  2.1]{ferrari:ErgoSpinStirring},
done for a two-letter alphabet. \\ 
 (ii) Condition \eqref{equa:ErgoFin} is a priori non trivial 
if $s>1$ and $|J(a)|>1$ for some $a\in\AA$.\\   
\end{rema} 
\noindent\textbf{{Refinement of Theorem~\ref{theo:Ergo}}.} 
Assume  that $\LL_1$ 
can be decomposed  as a sum of  several generators
\begin{equation} \label{equa:genesubs4}
  \LL_1 = \sum_{i=1}^d\LL_1^{(i)}, \quad \mbox{with} \quad
  \LL_1^{(i)}  f(\eta)  = \sum_{x  \in  \Z}  \sum_{a  \in \AA}\  c^{(i)}(a,\tau_x \eta)
  \big[f(\eta^x_a) - f(\eta)\big].
\end{equation}
For any  $i$ in  $\{1, \dots, d  \}$, define $m^{(i)},\,K^{(i)},\,s^{(i)}$     
as     in~\eqref{equa:CondSpinM}--\eqref{equa:CondSpinS},  
replacing $c(\cdot, \cdot)$  by $c^{(i)}( \cdot, \cdot)$,
defined through  $J^{(i)}(a)$, $\lambda_j^{(i)}(a)$,
 $A_j^{(i)}(a)$  as in \eqref{equa:SpinFlipRate}.
 In particular we have
\begin{equation} \label{equa:partition-i}
  \sum_{j \in J^{(i)}(a)} \1_{\{ \eta \in A_j^{(i)}(a)\}} = 1.
\end{equation} 
 \begin{rema} \label{rema:decomp_i}
Such a decomposition of the generator might
be useful if, for some $i$ in $\{1,\dots,d\}$, 
the rate  $c^{(i)}(\cdot, \cdot)$ depends on less 
sites than the  original rate $c(\cdot, \cdot)$, 
that is, $s^{(i)} < s$, or if $s^{(i)}=1$. See Theorem \ref{theo:Ergo2}
below.
\end{rema}
 As in \eqref{equa:blambda}, 
we set, for $i$ in $\{1,\dots,d\}$, 
\begin{eqnarray} \label{equa:blambda_i}
\blambda_j^{(i)}(a)&=& \lambda_j^{(i)}(a)-\lambda_{j-1}^{(i)}(a)
\quad \mbox{for} \,\, j \in J^{(i)}(a)\setminus\{0\}, \,\,      
\mbox{and} \quad  \blambda_{0}^{(i)}(a)=\lambda_{0}^{(i)}(a),  
\\ \label{equa:Lambda-i-total}
  \lambda^{(i)} (a)&=&\max\big\{\lambda_j^{(i)}(a) \, : \, j \in J^{(i)}(a) \big\} -
  \lambda_0^{(i)}(a) = \lambda^{(i)} _{|J^{(i)}(a)|-1}- \lambda^{(i)}_0(a).
\end{eqnarray} 
We replace assumption~\eqref{equa:Mpositif} by
\begin{equation} \label{equa:Mpositif2}
  \sum_{i=1}^d m^{(i)} >0,
\end{equation}
which can  be true  whereas for  some $i$, $m^{(i)}=0$. 
According to  \eqref{equa:bLambda0},  we would now define
$\blambda_0^{(i)} = \sum_{a \in \AA}  \blambda_0^{(i)}(a),$
but    this   quantity    could   be  zero   for    some    $i$   in
$\{1,\dots,d\}$. Therefore, the right quantities to consider are 
\begin{equation} \label{equa:branch_inf_i}
   \blambda_{0,d}(a) = \sum_{i=1}^d  \blambda_0^{(i)}(a)  
  \quad  \mbox{and} \quad  \blambda_{0,d}  = \sum_{a
    \in \AA}  \blambda_{0,d}(a), 
\end{equation}
which    are    positive    by~\eqref{equa:Mpositif2}   since    
$\blambda_{0,d}(a) \geq \sum_{i=1}^d m^{(i)}$. 

Finally, we set
 \begin{equation} \label{equa:branch_sup-sup_i}
    \blambda^{(i)} =  \max_{a\in \AA}  \max_{j \in  J^{(i)}(a)
    \setminus \{0\}}  \blambda_j^{(i)} (a).
\end{equation} 
Then we have 
\begin{theo} \label{theo:Ergo2}
Assume  \eqref{equa:Mpositif2} and  that
\begin{equation}
	\label{equa:ErgoFin2}
  \sum_{i=1}^d (s^{(i)}-1)  \blambda^{(i)} <  \blambda_{0,d}.
\end{equation}
Then, for any $\rho > 0$, the process $(\eta_t)_{t\geq 0}$ with
generator  $\LL$   given  by~\eqref{equa:genesubs3}  is  exponentially
ergodic.  If $\sum_{i=1}^d (s^{(i)}-1)  \blambda^{(i)} =  \blambda_{0,d}$  then  the
process is ergodic. 
\end{theo}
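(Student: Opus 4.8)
The plan is to prove Theorem~\ref{theo:Ergo2} by the same generalized-duality and branching-process comparison that underlies Theorem~\ref{theo:Ergo} (which is exactly the case $d=1$), but carefully redistributing the spontaneous and the configuration-dependent parts of the rates among the $d$ components. The decomposition \eqref{equa:genesubs4} is genuinely useful precisely because it lets one pool all spontaneous flips into a single killing rate $\blambda_{0,d}$ (recall \eqref{equa:branch_inf_i}), while attributing the branching to each component $i$ separately through $(s^{(i)}-1)\blambda^{(i)}$; this is why \eqref{equa:ErgoFin2} can be strictly weaker than applying \eqref{equa:ErgoFin} to the undecomposed generator, and why the theorem cannot simply be reduced to Theorem~\ref{theo:Ergo}. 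First I would write, for each $i$ and each target $a$, the rate $c^{(i)}(a,\eta)$ in the cumulative form $\lambda_0^{(i)}(a) + \sum_{k\geq 1}\blambda_k^{(i)}(a)\,\1_{B_k^{(i)}(a)}$, where $B_k^{(i)}(a)=\bigcup_{j\geq k}A_j^{(i)}(a)$ is the nested event on which the rate reaches at least level $k$; this telescoping is the algebraic identity that turns the partition \eqref{equa:partition-i} into a configuration-free base rate plus nested increments.

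On this basis I would set up the Harris graphical construction of Section~\ref{sect:Graph} for the full generator \eqref{equa:genesubs3}: at each site, for each $i$ and $a$, a Poisson process of rate $\lambda_0^{(i)}(a)$ of spontaneous flips to $a$, a Poisson process of rate $\blambda_k^{(i)}(a)$ of conditional flips to $a$ that fire only when $\eta\in B_k^{(i)}(a)$, and, with intensity $\rho\,p(x,y)$, the cut-and-paste marks $\sigma_{x,y}$. The dual (backward) exploration determining the value $\eta_t(x)$ then reads the graphical noise from time $t$ downwards: a spontaneous flip resolves the value and kills the exploration at that site, at total rate $\blambda_{0,d}$ there, whereas a conditional flip of component $i$ forces the examination of the remaining $\le s^{(i)}-1$ sites of the relevant dependency set, spawning that many new branches. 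Exploiting the nested structure of the $B_k^{(i)}(a)$, so that at each site the per-component examination rate is bounded by the single largest increment, the branching rate of component $i$ is at most $(s^{(i)}-1)\blambda^{(i)}$ (recall \eqref{equa:branch_sup-sup_i}).

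The role of the cut-and-paste is the crucial point for obtaining a result valid for every $\rho>0$: in the dual the marks $\sigma_{x,y}$ are bijections of $\Z$, so they merely relocate the existing exploration particles without ever creating or deleting any. Hence they leave invariant the total number of backward particles and do not perturb the branching/killing balance; assumption \eqref{equa:TransCond2} guarantees that this relocation yields a well-defined motion of the dual particles, so the dual is well-posed. This is exactly the generalization of the coupling-with-branching-processes argument of \cite{ferrari:ErgoSpinStirring} from nearest-neighbor symmetric stirring to a general, possibly unbounded-range, cut-and-paste.

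It then remains to compare the backward genealogy with a pure branching process in which each particle is killed at rate $\blambda_{0,d}$ and branches at total rate $\sum_{i=1}^d (s^{(i)}-1)\blambda^{(i)}$. Under the strict inequality \eqref{equa:ErgoFin2} this dominating process is subcritical and its extinction time has an exponential tail; therefore the backward tree rooted at $(x,t)$ is finite and, with exponentially small error, every leaf is a spontaneous flip, so $\eta_t(x)$ is a function of the graphical noise alone, independent of the initial configuration. Uniqueness of the invariant measure and convergence from any starting law, with the exponential rate \eqref{def:exp-erg}, follow; here \eqref{equa:Mpositif2} is used only to ensure $\blambda_{0,d}>0$, that is, a positive killing rate. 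The main obstacle, where I would spend the most care, is twofold: first, the bookkeeping that yields the \emph{per-component} bound $(s^{(i)}-1)\blambda^{(i)}$ with the \emph{maximal} increment $\blambda^{(i)}$ rather than a sum over levels and targets, which relies on organizing the conditional-flip clocks along the nested sets $B_k^{(i)}(a)$; and second, the boundary case, where \eqref{equa:ErgoFin2} is an equality and the dominating branching process is critical, so one only obtains almost sure extinction of the backward tree (hence ergodicity) but no exponential tail, and this case must be argued through the a.s.\ extinction of a critical branching process rather than through an exponential moment.
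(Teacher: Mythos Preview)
Your proposal follows essentially the same route as the paper: graphical construction with per-component marks $(j,a,i)$ and pooled $(\delta,a)$-marks, a backward dual in which the cut-and-paste marks act as bijections (hence preserve the particle count, yielding the $\rho$-independence), and domination of $|D_{\widehat u}|$ by a continuous-time branching process whose subcriticality condition is exactly \eqref{equa:ErgoFin2}. The only cosmetic difference is the parametrization of the dominating process---the paper phrases it as ``each branch dies at rate $\Theta=\blambda_{0,d}+\sum_i\blambda^{(i)}$ and is replaced by $s^{(i)}$ or $0$ branches'', whereas you use an equivalent birth--death formulation---and you are slightly more explicit than the paper about the critical equality case (a.s.\ extinction of a critical branching process, without an exponential tail).
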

\begin{rema} \label{rema:assu-bis}
(i)   A stronger condition for exponential ergodicity than \eqref{equa:ErgoFin2} is 
that   $(K^{(i)},s^{(i)},m^{(i)})_{i=1}^{d}$  satisfy
\begin{equation} \label{equa:Ergo2}
  \sum_{i=1}^{d}     m^{(i)}    >     0     \quad    \mbox{and}     \quad
  \sum_{i=1}^{d}(s^{(i)}-1)(K^{(i)}-m^{(i)}) < |\AA|\sum_{i=1}^{d} m^{(i)}. 
\end{equation}
 As in Remark \ref{rema:assu-1}, this last inequality is a natural extension 
of the one in  \cite[Theorem  2.2]{ferrari:ErgoSpinStirring}. \\ 
(ii) It is shown in \cite{ferrari:ErgoSpinStirring} in the case of 
a two-letter alphabet that \eqref{equa:ErgoFin2} improves, 
even without stirring, the usual
``$M<\varepsilon$'' condition for ergodicity. 
\end{rema} 
\subsection{Application to the RN+YpR model with 
cut-and-paste mechanism} \label{sect:appl-duality_RN+YpR}
\begin{proof} \textit{(of Theorem \ref{th:erg-RN+YpR}).} 
This is an application of Theorem \ref{theo:Ergo2}. 
The required notation for the RN+YpR model are contained in Table~\ref{tabl:RN+YpR}
(which refers to Section \ref{exam:RN+YpR}). 
\begin{table}[ht]
\caption{Values of the quantities introduced in
  Section~\ref{sect:DefAndExa} for the RN+YpR model}
  \[
  \renewcommand{\arraystretch}{1.2}
  \begin{array}{|c|c|c|c|c|c|c|}
    \hline
    a  &   S(a)  &   j  \in  J(a) & S_j(a) &  A_j(a)  &   \lambda_j(a)  &
    \blambda_j(a)\\ 
    \hline
    \aa & \{-1,0\} &  0 & \{0\} &\{ \eta \, : \, \eta(0) \in  Y \} & v_\aa &
    v_\aa\\ 
    &  &  1 & \{-1,0\} & \{  \eta \, :  \, \eta(-1)\eta(0)=b\gg, b\in  R \} &  w_\aa &
    w_\aa - v_\aa\\ 
    &  &  2 & \{-1,0\} & \{ \eta  \, : \, \eta(-1)\eta(0)=d\gg,  d \in Y,
    r^d_\aa = r^\cc_\aa \wedge r^\tt_\aa    \}  & w_\aa + r^d_\aa & r^d_\aa\\
    &  &  3 & \{-1,0\} & \{ \eta  \, : \, \eta(-1)\eta(0)=e\gg,  e \in Y,
    r^e_\aa = r^\cc_\aa \vee r^\tt_\aa  \}  & w_\aa +
    r^e_\aa & r^e_\aa - r^d_\aa\\ 
    \hline
    \tt & \{0,1\} &  0 & \{0\} & \{ \eta \, : \, \eta(0) \in  R \} & v_\tt &
    v_\tt \\ 
    &  &  1 & \{0,1\} & \{ \eta  \, : \, \eta(0)\eta(1)=\cc b, b\in Y \}  & w_\tt &
    w_\tt - v_\tt\\ 
    &  & 2 & \{0,1\}  & \{ \eta \, : \,  \eta(0)\eta(1)=\cc d, d \in R,
    r^d_\tt =  r^\aa_\tt\wedge r^\gg_\tt \} & w_\tt + r^d_\tt & r^d_\tt \\ 
    &  & 3 & \{0,1\}  & \{ \eta \, : \,  \eta(0)\eta(1)=\cc e, e \in R,
    r^e_\tt =  r^\aa_\tt\vee r^\gg_\tt \} & w_\tt + r^e_\tt & r^e_\tt - r^d_\tt \\ 
    \hline
    \cc & \{0,1\} &  0 & \{0\} & \{ \eta \, : \, \eta(0) \in  R \} & v_\cc &
    v_\cc\\ 
    &  &  1 & \{0,1\} & \{ \eta  \, : \, \eta(0)\eta(1)=\tt b, b\in Y \}  & w_\cc &
    w_\cc - v_\cc \\ 
    &  & 2 & \{0,1\} & \{ \eta \, : \,  \eta(0)\eta(1)=\tt d, d \in R,
    r^d_\cc = r^\aa_\cc\wedge r^\gg_\cc  \} & w_\cc + r^d_\cc & r^d_\cc \\ 
    &  & 3 & \{0,1\} & \{ \eta \, : \,  \eta(0)\eta(1)=\tt e, e \in R,
    r^e_\cc = r^\aa_\cc\vee r^\gg_\cc  \} & w_\cc + r^e_\cc & r^e_\cc - r^d_\cc\\ 
    \hline
    \gg & \{-1,0\} &  0 & \{0\} & \{ \eta \, : \, \eta(0) \in  Y \} & v_\gg &
    v_\gg \\ 
    &  &  1 & \{-1,0\} & \{  \eta \, :  \, \eta(-1)\eta(0)=b\aa, b\in  R \} &  w_\gg &
    w_\gg - v_\gg\\ 
    &  &  2 & \{-1,0\} & \{ \eta  \, : \, \eta(-1)\eta(0)=d\aa,  d \in Y, 
    r^d_\gg = r^\cc_\gg \wedge r^\tt_\gg\}  & w_\gg + r^d_\gg & r^d_\gg\\
    &  &  3 & \{-1,0\} & \{ \eta  \, : \, \eta(-1)\eta(0)=e\aa,  e \in Y, 
    r^e_\gg = r^\cc_\gg \vee r^\tt_\gg\}  & w_\gg + r^e_\gg & r^e_\gg - r^d_\gg\\ 
    \hline
    \hline
    \multicolumn{7}{|c|}{s=2,\quad 
     \blambda = \max\big(\{w_a-v_a\,: \, a \in \AA\} \cup 
    \YY\cup\RR \big), \quad \mbox{and}\quad 
      \blambda_0 = \sum_{a \in \AA} v_a}\\
    \hline
  \end{array}
  \]
\label{tabl:RN+YpR}
\end{table}
 We thus have that
  condition~\eqref{equa:ErgoFin} is equivalent to  (recall \eqref{def:calY}, \eqref{def:calR})
  \begin{equation}\label{equa:ErgoFin-IV}
  \min_{a \in \AA} v_a>0 \quad\mbox{and}\quad
    \max\big(\{w_a-v_a\,:  a \in \AA\}\cup\YY\cup\RR 
    \big)< \sum_{a \in \AA} v_a, 
  \end{equation} 
   while condition~\eqref{equa:Ergo} is equivalent to
  \[
  \min_{a \in \AA} v_a>0 \quad\mbox{and}\quad
  \max\big\{w_a+r_a^b,w_b+r_b^a  \,: \,  a \in  Y,  b \in  R \big\}  -
   \min_{a\in \AA} v_a <    4 \min_{a\in \AA} v_a. 
  \]
 One  can write
   $\LL_1=\LL_1^{(1)} + \LL_1^{(2)}$  with the rates  
 \[
  c^{(1)}(\aa,\eta)= \left\{ 
  \begin{array}{ll}
   r_\aa^a & \mbox{if }  \eta(0)  = \gg
     \mbox{ and } \eta(-1)=a\in \{\cc,\tt\}, \\
  0 & \mbox{else,}
  \end{array}
  \right.
\]
\[
  c^{(1)}(\tt,\eta)= \left\{ 
  \begin{array}{ll}
   r_\tt^a & \mbox{if  } \eta(0)  = \cc
     \mbox{ and } \eta(1)=a\in \{\aa,\gg\}, \\
  0 & \mbox{else},
      \end{array}
  \right.
\]
\[
  c^{(1)}(\cc,\eta)= \left\{
  \begin{array}{ll}
   r_\cc^a & \mbox{if } \eta(0)  = \tt
     \mbox{ and } \eta(1)=a\in \{\aa,\gg\}, \\
  0 & \mbox{else},
   \end{array}
   \right. 
\]
\[
  c^{(1)}(\gg,\eta)= \left\{
  \begin{array}{ll}
   r_\gg^a & \mbox{if } \eta(0)  = \aa
     \mbox{ and } \eta(-1)=a\in \{\cc,\tt\}. \\
  0 & \mbox{else},
\end{array}
\right.
\]
and
\[
  c^{(2)}(a,\eta)= \left\{
  \begin{array}{ll}
  w_a & \mbox{if } \{a,\eta(0)\}=\{\aa,\gg\}\mbox{ or }\{a,\eta(0)\}=\{\tt,\cc\}, \\
  v_a & \mbox{else.}
\end{array} \right.
\]
 (see the next two tables).
As a consequence, we have
 \begin{align}\label{eq:K-m-s_1}
  K^{(1)} &= \max\{r_a^b, r_b^a \, : \, a  \in R, b \in Y
  \}, \quad m^{(1)}= 0, \quad s^{(1)}= 2,\\ \label{eq:K-m-s_2}
  K^{(2)} &= \max_{a \in \AA} w_a, \quad m^{(2)}= \min_{a \in \AA} v_a, \quad s^{(2)}= 1.
\end{align} 
As claimed  in Remark~\ref{rema:decomp_i}, we  are in an interesting case because   $s^{(2)}=1 < s=2$. 
    \[
  \begin{array}{|c|c|c|c|c|c|}
    \hline
    a  &   S^{(1)}(a)  &  j   \in  J^{(1)}(a)  &   A_j^{(1)}(a)  &
    \lambda_j^{(1)}(a) & \blambda_j^{(1)}(a) \\ 
    \hline
    \aa &  \{-1,0\} & 0 &  \{ \eta \, :  \, (\eta(-1),\eta(0))\notin Y\times\{\gg\}
     \} & 0 & 0 \\ 
    & & 1 & \{ \eta \, : \,  \eta(-1)\eta(0)= d \gg, \, d \in Y, 
    r_\aa^d=r_\aa^\cc\wedge r_\aa^\tt \} &  r_\aa^d & r_\aa^d \\ 
    & & 2 & \{ \eta \, : \,  \eta(-1)\eta(0)= e \gg, \, e \in Y,
    r_\aa^e=r_\aa^\cc\vee r_\aa^\tt \} & r_\aa^e & r_\aa^e - r_\aa^d \\ 
    \hline
    \tt &  \{0,1\} & 0 &  \{ \eta \, :  \, (\eta(0),\eta(1))\notin \{\cc\}
    \times R \} & 0 & 0 \\ 
    & & 1 &  \{ \eta \, : \, \eta(0)\eta(1)=\cc d, \, d  \in R, 
    r_\tt^d=r_\tt^\aa\wedge r_\tt^\gg \} &  r_\tt^d & r_\tt^d \\ 
    & & 2 &  \{ \eta \, : \, \eta(0)\eta(1)=\cc e, \, e  \in R, 
    r_\tt^e=r_\tt^\aa\vee r_\tt^\gg \} &  r_\tt^e & r_\tt^e - r_\tt^d \\ 
    \hline
    \cc & \{0,1\} & 0 & \{ \eta \, : \, (\eta(0),\eta(1))\notin \{\tt \} 
    \times R \} & 0 & 0 \\ 
    & & 1 & \{ \eta \, : \, \eta(0)\eta(1)=\tt d, \, d \in R, 
    r_\cc^d = r_\cc^\aa\wedge r_\cc^\gg  \} & r_\cc^d & r_\cc^d \\
    & & 2 & \{ \eta \, : \, \eta(0)\eta(1)=\tt e, \, e \in R, 
    r_\cc^e = r_\cc^\aa\vee r_\cc^\gg  \} & r_\cc^e & r_\cc^e - r_\cc^d \\
    \hline
    \gg &  \{-1,0\} & 0 &  \{ \eta \, :  \, (\eta(-1),\eta(0))\notin Y\times\{\aa\}
     \} & 0 & 0 \\ 
    & & 1 & \{ \eta \, : \, \eta(-1)\eta(0)= d \aa, \, d \in Y, 
    r_\gg^d=r_\gg^\cc\wedge r_\gg^\tt \} &  r_\gg^d & r_\gg^d \\
    & & 2 & \{ \eta \, : \, \eta(-1)\eta(0)= e \aa, \, e \in Y, 
    r_\gg^e=r_\gg^\cc\vee r_\gg^\tt \} &  r_\gg^e & r_\gg^e - r_\gg^d \\
    \hline
        \hline
    \multicolumn{6}{|c|}{s^{(1)}= 2,\quad 
    \blambda^{(1)} = \max(\YY\cup\RR), 
    \quad \mbox{and}
      \quad 
      \blambda_0^{(1)} = 0}\\
    \hline
  \end{array}
  \]  
   \[
  \begin{array}{|c|c|c|c|c|c|}
    \hline
    a  & S^{(2)}(a)  & J^{(2)}(a) & A_j^{(2)}(a) & \lambda_j^{(2)}(a) & \blambda_j^{(2)} \\ 
    \hline
    \aa &  \{0\} & 0 &  \{ \eta \, :  \, \eta(0)\neq \gg \} & v_\aa & v_\aa \\ 
    & & 1 & \{ \eta \, : \, \eta(0)= \gg \} & w_\aa  & w_\aa - v_\aa \\
    \hline
    \tt & \{0\} & 0 & \{ \eta \, : \, \eta(0)\neq \cc \} & v_\tt & v_\tt \\ 
    & & 1 & \{ \eta \, : \, \eta(0)=\cc \} & w_\tt  & w_\tt - v_\tt \\
    \hline
    \cc & \{0\} & 0 & \{ \eta \, : \, \eta(0)\neq \tt \} & v_\cc & v_\cc \\ 
    & & 1& \{ \eta \, : \, \eta(0)=\tt \} & w_\cc  & w_\cc - v_\cc \\
    \hline
    \gg & \{0\} & 0 & \{ \eta \, : \, \eta(0)\neq \aa \} & v_\gg & v_\gg\\ 
    & & 1 & \{ \eta \, : \, \eta(0)=\aa \} & w_\gg  & w_\gg - v_\gg \\
    \hline
           \hline
    \multicolumn{6}{|c|}{s^{(2)}= 1,\quad 
    \blambda^{(2)} = \sum_{a  \in \AA}(w_a-v_a), \quad \mbox{and}
      \quad 
      \blambda_0^{(2)} =\sum_{a \in \AA} v_a}\\
    \hline
  \end{array}
  \] 
Condition~\eqref{equa:ErgoFin2}   becomes \eqref{equa:RNergo}. 
  Note that since $s^{(2)}=1$, $w_a$ is not present in condition~\eqref{equa:RNergo},
  which  is thus weaker  than condition~\eqref{equa:ErgoFin-IV},
  while condition~\eqref{equa:Ergo2} becomes (recall \eqref{eq:K-m-s_1}--\eqref{eq:K-m-s_2})
\begin{equation} \label{equa:Ergo2bis}
  \min_{a \in \AA} v_a>0      \quad\mbox{and}\quad
  \max\{r_a^b, r_b^a \, : \, a  \in R, b \in Y
  \} < 4\min_{a \in \AA} v_a. 
\end{equation} 
\end{proof}
\section{Ergodicity through attractiveness 
}\label{sect:results-AttrErg}   
In this section we present an alternative approach to ergodicity,
based on the attractiveness of the process when it is present.  
For the sake of simplicity, we restrict ourselves to the finite
alphabet $\AA=\{\aa,\cc,\gg,\tt\}$.
While our first result (in Section \ref{sect:first-AttrErg}) is general,
the next ones (in Sections \ref{sec:ErgoAttrRN+YpR} and \ref{subsec:other-order}) 
depend on the specific dynamics of our generic  example,
the RN+YpR model with cut-and-paste  mechanism. The results stated in this section are 
proved in Section \ref{sect:ProofsAttrErg}. 

\subsection{Set-up and first result}\label{sect:first-AttrErg}  
We first recall the general set-up for attractiveness, relying on
\cite{ligg:IPS}. It requires  a total  order $\leq$ on $\AA$ which  induces a partial order
on $X$. Let us assume that such an order has been defined for the 4 elements of $\AA$,
that we write $1<2<3<4$ for the moment. 

Let $\eta$ and  $\xi$ be two configurations of $X$.  We say that $\eta
\leq \xi$ if for any $x\in\Z$, we have $\eta(x) \leq \xi(x)$.
We define $\MM$ as the class of bounded monotone functions $f$ on
$X$, that is, 
for all configurations $\eta$ and $\xi$ such that $\eta \leq \xi$, we have
$f(\eta) \leq f(\xi)$.  
The partial order on $X$ induces a stochastic order on   $\pp(X)$.   
For two  elements $\mu$ and $\nu$  of $\pp(X)$, 
we say that $\mu \preceq \nu$ if, for any $f \in \MM$, we
have $\mu(f) \leq \nu(f)$. 

According to Theorem~II.2.2 in \cite{ligg:IPS}, for any Feller process
on $X$ with semigroup $\{S(t) \, : \, t \geq 0\}$, the following two 
statements are equivalent. For any $\mu,\nu\in\pp(X)$,
\begin{itemize}
\item[(i)]
If $f \in \MM$, then $S(t)f \in \MM$, for any $t \geq 0$.
\item[(ii)]
If $\mu \preceq \nu$, then $\mu S(t) \leq \nu S(t)$, for any $t \geq 0$.
\end{itemize}
 A  Feller process  on $X$ with semigroup $\{S(t) \, : \, t \geq 0\}$
 is said  to be  \textit{attractive} if the equivalent conditions above are
satisfied. 

 An attractive process possesses  two special extremal invariant probability 
measures, the lower and the upper  one, 
that is for us $\mu_l=\lim_{t\to\infty}\delta_{\underline 1}S(t)$ 
and $\mu_u=\lim_{t\to\infty}\delta_{\underline 4}S(t)$, where $\delta_{\underline 1}$ (resp. $\delta_{\underline 4}$)
denotes the Dirac measure on the configuration $\eta$ such that $\eta(x)=1$ (resp. $\eta(x)=4$)
for all $x\in\Z$.  Note that they are translation invariant, since the dynamics is translation invariant. 
 They satisfy $\mu_l\preceq\mu_u$
and
any invariant probability measure $\mu$ is such that $\mu_l\preceq\mu\preceq\mu_u$. 
The process is ergodic if and only if $\mu_l=\mu_u$. 

To deal with our generic example, there are many possibilities 
to define an order on $\AA$. We have to choose one order that will
induce attractiveness of the dynamics.
We thus define the strict order $<$ on $\AA$ as
\begin{equation}\label{def:order-on-AA_ctag}
\cc < \tt < \aa < \gg
\end{equation}
that we use from now on.  Other possible choices will be detailed in 
Section \ref{subsec:other-order}.
\begin{prop}\label{prop:mu-a-t-c-g}
Assume that a Feller process on $X$ is attractive with respect to the order in 
\eqref{def:order-on-AA_ctag} and has translation invariant rates.
If any invariant and translation invariant probability measure $\mu$ satisfies
\begin{eqnarray}\label{eq:a=t}
\mu(\eta(0)=\aa)&=&\mu(\eta(0)=\tt),\\
\mu(\eta(0)=\cc)&=&\mu(\eta(0)=\gg),\label{eq:c=g}
\end{eqnarray}
then the process is ergodic, that is, $\mu_l=\mu_u$.
\end{prop}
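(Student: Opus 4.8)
The plan is to exploit the attractive structure directly. Since the dynamics is attractive for the order \eqref{def:order-on-AA_ctag} with $\cc<\tt<\aa<\gg$, the lower and upper invariant measures $\mu_l$ and $\mu_u$ exist, are translation invariant, satisfy $\mu_l\preceq\mu_u$, and the process is ergodic precisely when $\mu_l=\mu_u$. Both $\mu_l$ and $\mu_u$ are invariant and translation invariant, hence each satisfies the hypotheses \eqref{eq:a=t}--\eqref{eq:c=g}. The goal is to upgrade the stochastic domination $\mu_l\preceq\mu_u$ to an equality, and I would do this in two stages: first show that the two measures share the same one-site marginal, then propagate this to full equality through a monotone coupling.

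For the first stage, fix an invariant translation invariant $\mu$ and abbreviate $p_a=\mu(\eta(0)=a)$. The hypotheses read $p_\aa=p_\tt$ and $p_\cc=p_\gg$, so the one-site law is governed by the single parameter $t:=p_\gg=p_\cc$, with $p_\aa=p_\tt=\tfrac12-t$. Computing the upper-tail probabilities for the order \eqref{def:order-on-AA_ctag} gives $\mu(\eta(0)\geq\gg)=t$, then $\mu(\eta(0)\geq\aa)=p_\aa+p_\gg=\tfrac12$, and $\mu(\eta(0)\geq\tt)=1-p_\cc=1-t$; note that the middle threshold is pinned at $\tfrac12$ independently of $t$. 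Writing $t_l,t_u$ for the parameters of $\mu_l,\mu_u$, I then apply the domination $\mu_l\preceq\mu_u$ to the two monotone indicators $\1\{\eta(0)\geq\gg\}$ and $\1\{\eta(0)\geq\tt\}$: the first yields $t_l\leq t_u$, the second yields $1-t_l\leq 1-t_u$, that is $t_u\leq t_l$. Hence $t_l=t_u$, so $\mu_l$ and $\mu_u$ have identical one-site marginals, and by translation invariance identical marginals at every site.

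For the second stage, I invoke the ordered coupling afforded by attractiveness (equivalently, Strassen's theorem applied to $\mu_l\preceq\mu_u$): there is a measure $\widehat\mu$ on $X\times X$ with marginals $\mu_l$ and $\mu_u$, concentrated on $\{(\eta,\xi):\eta\leq\xi\}$. For each site $x$ and each threshold $k\in\{\tt,\aa,\gg\}$ one has the inclusion $\{\eta(x)\geq k\}\subseteq\{\xi(x)\geq k\}$, while by the first stage these two events carry equal $\widehat\mu$-mass; therefore $\widehat\mu(\xi(x)\geq k,\ \eta(x)<k)=0$. Since the discrepancy event $\{\eta(x)\neq\xi(x)\}=\{\eta(x)<\xi(x)\}$ is the union over the three thresholds of these null events, it is itself null; a countable union over $x\in\Z$ then gives $\widehat\mu(\eta=\xi)=1$, whence $\mu_l=\mu_u$ and the process is ergodic.

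I expect the only real subtlety to lie in the first stage: the observation that the two algebraic constraints \eqref{eq:a=t}--\eqref{eq:c=g} force the one-site law onto a one-parameter family for which two of the three threshold probabilities, $t$ and $1-t$, vary oppositely in the parameter. It is exactly this opposite monotonicity, combined with the single domination $\mu_l\preceq\mu_u$, that squeezes $t_l=t_u$; the choice of order \eqref{def:order-on-AA_ctag}, placing $\aa,\tt$ in the two central ranks and $\cc,\gg$ at the extremes, is what makes the constraints compatible with this squeeze. The second stage is then routine once equality of marginals is in hand.
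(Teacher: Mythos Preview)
Your proof is correct and follows essentially the same strategy as the paper: use monotone indicator functions together with $\mu_l\preceq\mu_u$ and the hypotheses to force equality of the one-site marginals of $\mu_l$ and $\mu_u$, then pass to a monotone (Strassen) coupling to conclude $\mu_l=\mu_u$. Your parametrization by the single parameter $t=p_\gg=p_\cc$ is a tidy streamlining of the paper's Step~1--Step~2, where the paper instead tests three indicators $\1\{\eta(0)\geq\gg\}$, $\1\{\eta(0)>\cc\}$, $\1\{\eta(0)>\tt\}$ and chains the resulting inequalities; your observation that the middle threshold $\mu(\eta(0)\geq\aa)=\tfrac12$ is already pinned makes clear why two indicators suffice.
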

\begin{rema}\label{rk:Y=R}
(i) Conditions \eqref{eq:a=t}--\eqref{eq:c=g} imply that $\mu(Y)=\mu(R) =1/2$.\\
(ii) Proposition \ref{prop:mu-a-t-c-g} can be extended to an alphabet of size 
$2n$ or $2n+1$ for an integer  $n\geq 1$ as follows.
Denoting $\AA=\{a_1,a_2,\cdots,a_k\}$ with the order  $a_1< a_2<\cdots< a_k$ 
and $k\in\{2n,2n+1\}$,
if we assume that any invariant and translation invariant probability measure 
 $\mu$  satisfies
\begin{eqnarray}\label{eq:ai=ak-i}
\mu(\eta(0)=a_i)&=&\mu(\eta(0)=a_{k-i})
\end{eqnarray}
for all $1\leq i\leq n$, then the process is ergodic. 
\end{rema}
\subsection{The attractive RN+YpR model with cut-and-paste  mechanism}\label{sec:ErgoAttrRN+YpR}
The next proposition will enable to concentrate on substitution models
to check attractiveness. 
\begin{prop}\label{prop:superp-attr}
Assume that the RN+YpR  model 
 is attractive. Then its superimposition  
with a cut-and-paste process is attractive as well.
\end{prop}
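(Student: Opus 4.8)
The plan is to establish attractiveness by exhibiting a Markovian coupling of two copies of the superimposed process that preserves the partial order induced by \eqref{def:order-on-AA_ctag}; by the equivalence in Theorem~II.2.2 of \cite{ligg:IPS} (statement (ii)), this gives attractiveness. Write $D=\{(\eta,\xi)\in X\times X:\eta\leq\xi\}$ for the associated ordered set. The strategy is to couple the substitution part and the cut-and-paste part separately and then superimpose the two \emph{coupled} generators: because coupled marginals add, the result is a coupling of $\LL=\LL_1+\rho\LL_2$, and because each coupled generator keeps the pair inside $D$, so does their nonnegative combination. It is worth stressing that the argument uses only that $\LL_1$ is attractive and that the cut-and-paste transitions are permutations, so nothing specific to RN+YpR enters.

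First I would treat the cut-and-paste mechanism. The essential observation is that, for any pair of sites $x\neq y$, the map $\sigma_{x,y}$ is monotone on $X$: if $\eta\leq\xi$ then for every $z\in\Z$ one has $\sigma_{x,y}(\eta)(z)=\eta(\sigma_{x,y}^{-1}(z))\leq\xi(\sigma_{x,y}^{-1}(z))=\sigma_{x,y}(\xi)(z)$, whence $\sigma_{x,y}(\eta)\leq\sigma_{x,y}(\xi)$. Consequently the natural coupling of two cut-and-paste processes in which, at rate $p(x,y)$, the \emph{same} permutation $\sigma_{x,y}$ is applied to both marginals keeps the pair inside $D$; its coupled generator $\widehat{\LL}_2$ has each marginal equal to $\LL_2$, and every transition it allows sends a state of $D$ to another state of $D$.

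Next I would invoke the hypothesis. Since $\LL_1$ generates an attractive process, the basic coupling of two $\LL_1$-processes (constructed as in \cite{GS,borrello-1}) has a generator $\widehat{\LL}_1$ whose two marginals are $\LL_1$ and for which every allowed transition maps $D$ into $D$; equivalently, started from an ordered pair the coupled spin dynamics stays ordered. Forming the superimposed coupled generator $\widehat{\LL}=\widehat{\LL}_1+\rho\,\widehat{\LL}_2$, its first (resp.\ second) marginal is $\LL_1+\rho\LL_2=\LL$, so $\widehat{\LL}$ generates a coupling of the superimposed process, which is well defined on $X\times X$ under \eqref{equa:K-s-finite} and \eqref{equa:TransCond2} exactly as its marginals are. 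Because both $\widehat{\LL}_1$ and $\widehat{\LL}_2$ send $D$ into $D$ and $\rho>0$, no transition of $\widehat{\LL}$ leaves the closed set $D$ from within $D$. As the coupled process has càdlàg paths and the order can be broken only at a jump, $\eta_0\leq\xi_0$ forces $\eta_t\leq\xi_t$ for all $t\geq 0$; hence $\mu\preceq\nu$ implies $\mu S(t)\preceq\nu S(t)$, which is attractiveness of the superimposition.

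The only genuinely new verification beyond the hypothesis is the monotonicity of $\sigma_{x,y}$, which is immediate. The main point requiring care is the existence and order-preservation of the coupled process in infinite volume (the cut-and-paste part has unbounded range), which I would handle through the graphical construction of Section~\ref{sect:ProofsDuality}, driving both marginals by the same Harris system and passing through finite-range approximations, so that conditions \eqref{equa:K-s-finite} and \eqref{equa:TransCond2} transfer unchanged to the coupled dynamics on $X\times X$.
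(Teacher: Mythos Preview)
Your proof is correct and follows essentially the same approach as the paper: couple the substitution part via the monotone coupling assumed to exist, couple the cut-and-paste part by applying the same permutation $\sigma_{x,y}$ to both marginals (which preserves order because $\sigma_{x,y}$ is monotone), and add the two coupled generators. You actually provide more detail than the paper, which leaves the monotonicity of $\sigma_{x,y}$ and the infinite-volume existence implicit.
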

\begin{prop}\label{prop:RN+YpR-attr}
Assume that $\AA$ is endowed with the order 
\eqref{def:order-on-AA_ctag}. Under the conditions
\begin{eqnarray}
\label{eq:cond-rate-c}  0&=&r_\cc^\gg = r_\cc^\aa,\\
\label{eq:cond-rate-t}r_\tt^\aa &\leq& r_\tt^\gg, \\
\label{eq:cond-rate-a}r_\aa^\tt &\leq&  r_\aa^\cc,  \\
\label{eq:cond-rate-g}r_\gg^\cc = r_\gg^\tt &=& 0, 
\end{eqnarray} 
the RN+YpR model is attractive.
\end{prop}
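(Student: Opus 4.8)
The plan is to verify attractiveness directly through Liggett's criterion for particle systems (Theorem III.2.2 in \cite{ligg:IPS}), which characterizes attractiveness of a substitution process in terms of a monotonicity condition on the rates. Recall that with the order \eqref{def:order-on-AA_ctag}, namely $\cc<\tt<\aa<\gg$, attractiveness requires that whenever $\eta\leq\xi$ are two configurations, the coupled dynamics can be constructed so as to preserve this order. For spin-type systems with rates $c(a,\eta)$, the standard sufficient (and, for such flip systems, necessary) condition is that for each pair $\eta\leq\xi$ agreeing off a site, flips that would break the order are suppressed: an upward flip in $\eta$ at a site where $\eta(x)<\xi(x)$ must occur at rate at least that of the corresponding flip in $\xi$, and symmetrically for downward flips. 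Since the RN+YpR rates depend only on sites $S(a)=\{-1,0\}$ or $\{0,1\}$, I would reduce the verification to finitely many local configurations.

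First I would set up the coupling site by site. Because $S(a)$ has cardinality two for each target $a$, the attractiveness inequalities reduce to comparing $c(a,\eta)$ and $c(a,\xi)$ for configurations $\eta\leq\xi$ that differ only on the neighboring coordinates relevant to site $0$. The key monotonicity requirements translate, for each target letter and each threshold value in the order, into inequalities between the piecewise-constant rates displayed in Section \ref{exam:RN+YpR}. I would organize this by the four possible flip targets $\aa,\tt,\cc,\gg$ and check that the rate function is monotone in the appropriate direction of the partial order on the neighbor configuration. The base rates $v_a,w_a$ satisfy $0\leq v_a\leq w_a$ by assumption, so the bulk of the work is tracking how the YpR correction rates $r_a^b$ interact with the chosen order $\cc<\tt<\aa<\gg$.

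The crux is that the order \eqref{def:order-on-AA_ctag} was engineered precisely so that the four conditions \eqref{eq:cond-rate-c}--\eqref{eq:cond-rate-g} make these local inequalities hold. I expect each condition to correspond to killing or ordering one family of neighbor-dependent flips: \eqref{eq:cond-rate-c} and \eqref{eq:cond-rate-g} set certain correction rates to zero so that no order-violating flip from $\cc$ or to $\gg$ can occur at all, while \eqref{eq:cond-rate-t} and \eqref{eq:cond-rate-a} ensure the surviving corrections $r_\tt^\bullet$ and $r_\aa^\bullet$ are monotone in the neighbor's value under the order $\cc<\tt<\aa<\gg$ (so that a larger neighbor never produces a rate that pushes the coupled pair apart). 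I would go through the transitions $\cc\gg,\cc\aa,\tt\gg,\tt\aa$ one at a time, writing the corresponding coupled-rate comparison and confirming it holds exactly when these hypotheses are in force.

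The main obstacle I anticipate is bookkeeping rather than conceptual: one must correctly translate the abstract monotonicity requirement into the right direction of inequality for each of the (finitely many) cases, being careful that the order on $\AA$ places $\gg$ at the top and $\cc$ at the bottom with the pyrimidine $\tt$ and the purine $\aa$ in between, which is not the natural $Y/R$ grouping. A single sign or direction error in identifying which flip is ``upward'' would invalidate a case, so I would tabulate all relevant neighbor pairs and the induced rate comparisons explicitly, then confirm that the listed hypotheses are exactly what is needed. Once every local inequality is checked, attractiveness of the substitution process follows from the Liggett criterion, completing the proof.
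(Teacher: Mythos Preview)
Your approach is essentially the same as the paper's: both verify attractiveness by constructing the basic coupling and checking, target letter by target letter, that no coupled transition can break the order \eqref{def:order-on-AA_ctag}. The paper carries this out through explicit tables of coupled transitions from each ordered pair $(\eta(0),\xi(0))$, flagging the order-breaking transitions and reading off exactly the conditions \eqref{eq:cond-rate-c}--\eqref{eq:cond-rate-g}; this is precisely the ``bookkeeping'' you anticipate. One small correction: the reference to Theorem~III.2.2 of \cite{ligg:IPS} is for two-state spin systems and does not directly cover a four-letter alphabet, so the paper proceeds by explicit coupling construction rather than invoking a ready-made rate criterion; your plan should do the same.
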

\begin{rema} 
(i) The rates $r_a^b$ have to satisfy inequalities 
(see \eqref{eq:cond-rate-t}--\eqref{eq:cond-rate-a}), except
the ones corresponding to a transition to the biggest  or to the smallest element of
$\AA$ (with respect to the order 
\eqref{def:order-on-AA_ctag}), that have to be 0
(see \eqref{eq:cond-rate-c} and \eqref{eq:cond-rate-g}). \\
(ii) When the transition probability $p(.,.)$ is nearest-neighbor,
an application of Theorem 2.4 in \cite{borrello-1} gives that conditions \eqref{eq:cond-rate-c}--\eqref{eq:cond-rate-g}
are also necessary for attractiveness of the 
RN+YpR model with cut-and-paste mechanism.
\end{rema} 
 The next lemma is an immediate application of Proposition \ref{prop:RN+YpR-attr}.
\begin{lemm}\label{lem:attra-first-exa} 
 The T92+$\cc$p$\gg$ model, 
hence also the
JC+$\cc$p$\gg$ model, are attractive. 
\end{lemm}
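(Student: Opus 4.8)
The plan is to recognize this lemma as a pure verification: it claims that the explicitly tabulated rates of the T92+$\cc$p$\gg$ model fall within the hypotheses of Proposition \ref{prop:RN+YpR-attr}. So the approach is to check, one by one, that the rates \eqref{rates:T92+CpG-1}--\eqref{rates:T92+CpG-4} satisfy the four attractiveness conditions \eqref{eq:cond-rate-c}--\eqref{eq:cond-rate-g} relative to the order $\cc<\tt<\aa<\gg$ fixed in \eqref{def:order-on-AA_ctag}, and then invoke Proposition \ref{prop:RN+YpR-attr} to conclude attractiveness.

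First I would read off from \eqref{rates:T92+CpG-1}--\eqref{rates:T92+CpG-4} the eight YpR rates, namely $r_\aa^\tt=0$, $r_\aa^\cc=r$, $r_\tt^\aa=0$, $r_\tt^\gg=r$, $r_\cc^\aa=r_\cc^\gg=0$ and $r_\gg^\tt=r_\gg^\cc=0$. The two vanishing conditions \eqref{eq:cond-rate-c} and \eqref{eq:cond-rate-g}, which require that every rate of a transition towards the smallest letter $\cc$ or towards the largest letter $\gg$ be zero, hold at once because $r_\cc^\aa=r_\cc^\gg=0$ by \eqref{rates:T92+CpG-3} and $r_\gg^\tt=r_\gg^\cc=0$ by \eqref{rates:T92+CpG-4}. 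The two remaining inequalities \eqref{eq:cond-rate-t} and \eqref{eq:cond-rate-a} read $r_\tt^\aa\leq r_\tt^\gg$ and $r_\aa^\tt\leq r_\aa^\cc$; since their left-hand sides are $0$ and their right-hand sides both equal the nonnegative rate $r$, they hold trivially. Hence all four conditions of Proposition \ref{prop:RN+YpR-attr} are met and the T92+$\cc$p$\gg$ model is attractive.

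For the JC+$\cc$p$\gg$ model I would simply note that it is the particular case of T92+$\cc$p$\gg$ recalled in Section \ref{exam:JC+CpG}, so attractiveness is inherited; equivalently, the identical one-line check of \eqref{eq:cond-rate-c}--\eqref{eq:cond-rate-g} against \eqref{rates:JC+CpG-1}--\eqref{rates:JC+CpG-4} goes through verbatim, since those rates have exactly the same zero pattern and again satisfy $r_\tt^\aa=r_\aa^\tt=0\leq r$. There is essentially no analytical obstacle here, as the entire content of the lemma is the bookkeeping of matching the tabulated rates to the hypotheses of Proposition \ref{prop:RN+YpR-attr}; the only point deserving a moment's attention is the orientation of the order \eqref{def:order-on-AA_ctag}, namely that $\cc$ is smallest and $\gg$ largest, so that it is precisely the $\cc$-directed and $\gg$-directed rates that must be forced to vanish — which is exactly the structural feature built into the $\cc$p$\gg$ models.
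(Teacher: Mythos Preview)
Your proof is correct and follows exactly the approach the paper takes: the lemma is stated there as an immediate application of Proposition \ref{prop:RN+YpR-attr}, and you have carried out precisely that verification by checking \eqref{eq:cond-rate-c}--\eqref{eq:cond-rate-g} against the tabulated rates \eqref{rates:T92+CpG-1}--\eqref{rates:T92+CpG-4}.
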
 
In view of our next results, we compute 
the first moments of any translation invariant and invariant 
probability measure for the RN+YpR model with cut-and-paste mechanism.
\begin{prop}\label{prop:RN+YpR_mu-a-t-c-g}
Let  $\mu\in\II\cap\SS$. 
It satisfies
\begin{eqnarray}\label{eq:RN+YpR_mu-a+g}
\mu(R)= \mu(\eta(0)\in\{\aa,\gg\} ) &=& \frac{v_{\textsc r}}{v_{\textsc y}+v_{\textsc r}},
\\\label{eq:RN+YpR_mu-c+t}
 \mu(Y)= \mu(\eta(0)\in\{\cc,\tt\} ) &=& \frac{v_{\textsc y}}{v_{\textsc y}+v_{\textsc r}},
\end{eqnarray}
and 
\begin{eqnarray}\label{eq:RN+YpR_mu-a}
\mu(\eta(0)=\aa) &=& 
               \frac{v_\aa\, \mu(Y)+w_\aa\,\mu(R)-r_{\textsc r}}{w_{\textsc r}+v_{\textsc y}}, \\ \label{eq:RN+YpR_mu-c}
\mu(\eta(0)=\cc) &=& 
         \frac{v_\cc\,\mu(R)+ w_\cc\, \mu(Y)-r_{\textsc y}}{w_{\textsc y}+v_{\textsc r}},  \\ \label{eq:RN+YpR_mu-g}
\mu(\eta(0)=\gg) &=& 
           \frac{v_\gg\, \mu(Y)+w_\gg\,\mu(R)+r_{\textsc r}}{w_{\textsc r}+v_{\textsc y}}, \\ \label{eq:RN+YpR_mu-t}
\mu(\eta(0)=\tt) &=& 
                \frac{v_\tt\,\mu(R)+w_\tt\, \mu(Y)+r_{\textsc y}}{w_{\textsc y}+v_{\textsc r}}, 
\end{eqnarray}
where we abbreviated
\begin{eqnarray} \label{eq:abbrev-v1-v2}
 v_{\textsc y}  = v_\tt+v_\cc \quad &;& \quad v_{\textsc r}  = v_\aa+v_\gg, \\ \label{eq:abbrev-w1-w2}
 w_{\textsc y}  = w_\tt+w_\cc \quad &;&  \quad w_{\textsc r}  = w_\aa+w_\gg, 
\end{eqnarray}
and
\begin{eqnarray} \nonumber
-r_{\textsc y} = -r_{\textsc y}(\mu)  &=& 
\sum_{a\in R}  r_\cc^a \mu\left((\eta(0),\eta(1))=(\tt,a)\right)
\\&&\quad -  \sum_{a\in R}
 r_\tt^a  \mu\left((\eta(0),\eta(1))=(\cc,a)\right), \label{eq:abbrev-r1}\\
 \nonumber
-r_{\textsc r}= -r_{\textsc r}(\mu)  &=& 
\sum_{a\in Y} r_\aa^a \mu\left((\eta(-1),\eta(0))=(a,\gg)\right)\\
&&\quad -  \sum_{a\in Y} r_\gg^a  \mu\left((\eta(-1),\eta(0))=(a,\aa)\right).\label{eq:abbrev-r2}
\end{eqnarray}
\end{prop}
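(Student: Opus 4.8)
The plan is to use the standard characterization of invariance: $\mu\in\II$ if and only if $\int\LL f\,\dd\mu = 0$ for every bounded cylinder function $f$, where $\LL=\LL_1+\rho\LL_2$ is given by \eqref{equa:genesubs3}. I would feed into this identity the single-site indicators $f=\1_{\{\eta(0)=a\}}$ for $a\in\AA$, together with $f=\1_{\{\eta(0)\in R\}}$. The crucial structural fact is that the cut-and-paste generator contributes nothing to these single-site equations: for such an $f$ and $\mu\in\SS$, translation invariance gives $\int f(\sigma_{x,y}\eta)\,\dd\mu = \mu(\eta(\sigma_{x,y}^{-1}(0))=a) = \mu(\eta(0)=a) = \int f\,\dd\mu$, so each permutation term of $\LL_2 f$ integrates to zero. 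Summing over the pairs $(x,y)$ requires an interchange of sum and integral, and this is exactly where assumption \eqref{equa:TransCond2} is used, since it bounds the total rate of permutations touching site $0$. Consequently $\int\LL_2 f\,\dd\mu=0$, and every moment identity below involves only the substitution rates, being in particular independent of $\rho$ and of $p$.

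First I would obtain $\mu(Y)$ and $\mu(R)$ from $f=\1_{\{\eta(0)\in R\}}$. Only the flip at site $0$ enters $\LL_1 f$, and reading off the rates of Section \ref{exam:RN+YpR} one sees that from any $\eta(0)\in Y$ the total rate into $R$ is $v_\aa+v_\gg=v_{\textsc r}$, while from any $\eta(0)\in R$ the total rate into $Y$ is $v_\cc+v_\tt=v_{\textsc y}$; the decisive point is that the YpR rates $r^{\cdot}_{\cdot}$ only swap the two purines or the two pyrimidines and hence never change the $Y/R$ type. Thus $\LL_1 f(\eta)=v_{\textsc r}\,\1_{\{\eta(0)\in Y\}}-v_{\textsc y}\,\1_{\{\eta(0)\in R\}}$, and $\int\LL f\,\dd\mu=0$ yields the balance $v_{\textsc r}\,\mu(Y)=v_{\textsc y}\,\mu(R)$; combined with $\mu(Y)+\mu(R)=1$ this gives \eqref{eq:RN+YpR_mu-a+g}--\eqref{eq:RN+YpR_mu-c+t}.

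Next I would treat the individual frequencies, say $\mu(\eta(0)=\aa)$ via $f=\1_{\{\eta(0)=\aa\}}$, writing $\int\LL_1 f\,\dd\mu=0$ as a balance of the incoming and outgoing flows at site $0$. The incoming flow gathers the rate $v_\aa$ from $\eta(0)\in Y$ and, from $\eta(0)=\gg$, the rate $w_\aa$ plus the neighbour excess $r_\aa^{\eta(-1)}$ when $\eta(-1)\in Y$; the outgoing flow gathers $v_{\textsc y}+w_\gg$ plus the excess $r_\gg^{\eta(-1)}$ when $\eta(-1)\in Y$. Grouping the $w$-terms through $\mu(\eta(0)=\gg)$ and recognising the remaining dinucleotide sums as $r_{\textsc r}$ of \eqref{eq:abbrev-r2}, the identity becomes
\begin{equation*}
v_\aa\,\mu(Y)+w_\aa\,\mu(\eta(0)=\gg)-(v_{\textsc y}+w_\gg)\,\mu(\eta(0)=\aa)=r_{\textsc r}.
\end{equation*}
Substituting $\mu(\eta(0)=\gg)=\mu(R)-\mu(\eta(0)=\aa)$ collapses this to a single linear equation in $\mu(\eta(0)=\aa)$ with coefficient $w_\aa+w_\gg+v_{\textsc y}=w_{\textsc r}+v_{\textsc y}$, which is \eqref{eq:RN+YpR_mu-a}; then \eqref{eq:RN+YpR_mu-g} follows from $\mu(\eta(0)=\gg)=\mu(R)-\mu(\eta(0)=\aa)$ together with the $Y/R$ balance. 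The pyrimidine formulas \eqref{eq:RN+YpR_mu-c}, \eqref{eq:RN+YpR_mu-t} come out identically, the only change being that $S(\cc)=S(\tt)=\{0,1\}$ makes the relevant neighbour $\eta(1)$, so the dinucleotide sums assemble into $r_{\textsc y}$ of \eqref{eq:abbrev-r1}.

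The calculations are elementary once the rate tables of Section~\ref{exam:RN+YpR} are organised as above, and I expect the only genuinely delicate step to be the justification that $\int\LL_2 f\,\dd\mu=0$, i.e.\ the interchange of summation and integration, for which \eqref{equa:TransCond2} is precisely the hypothesis needed. Everything else is bookkeeping together with the repeated use of the $Y/R$ balance $v_{\textsc r}\,\mu(Y)=v_{\textsc y}\,\mu(R)$.
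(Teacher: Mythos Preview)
Your proposal is correct and follows essentially the same route as the paper: apply $\int\LL f\,\dd\mu=0$ to the single-site indicators, observe that the cut-and-paste part integrates to zero under a translation-invariant $\mu$, and solve the resulting linear system from the substitution rates. The only cosmetic differences are that the paper obtains $\mu(Y),\mu(R)$ by adding the equations for $\aa$ and $\gg$ rather than by testing $f=\1_{\{\eta(0)\in R\}}$ directly, and that it writes out $\LL_2 f$ explicitly before integrating; your remark that \eqref{equa:TransCond2} justifies the sum--integral interchange is a technical point the paper leaves implicit.
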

\begin{rema}\label{rk:bgp}
The values \eqref{eq:RN+YpR_mu-a+g}--\eqref{eq:RN+YpR_mu-t}
were already computed in  \cite[Proposition 14]{piau:solv}: 
indeed since we consider first moments of 
a translation invariant probability measure
for a translation invariant dynamics, the cut-and-paste mechanism 
disappears from our computations.
However we cannot obtain two-points correlations as in  
\cite[Proposition 15]{piau:solv}, since there
the cut-and-paste mechanism is present in computations. 
\end{rema}
We can now prove part (i) of Theorem \ref{th:ergo-first-exa}; the first part of 
(ii) will be proved later on, thanks to Proposition \ref{prop:nu-diag}.
\begin{proof} \textit{(of Theorem \ref{th:ergo-first-exa}, (i)).}
This is an application of Proposition \ref{prop:mu-a-t-c-g}: 
We have to check  that equalities \eqref{eq:a=t}--\eqref{eq:c=g}
are satisfied for both examples. Since the JC+$\cc$p$\gg$ model 
is a particular case of the
T92+$\cc$p$\gg$ model, it is enough to consider the latter.
By Lemma \ref{lem:attra-first-exa} and Proposition \ref{prop:superp-attr},
the T92+$\cc$p$\gg$ model with cut-and-paste mechanism is attractive.
 Then we compute,  using that $\mu$ is translation invariant, 
\begin{eqnarray*}\label{eq:T92+CpG_v1v2}
v_{\textsc y} = v_{\textsc r}=v &;& w_{\textsc y} = w_{\textsc r}=w,\\\label{eq:T92+CpG_ACGT}
\mu(Y)=\mu(R)&= & 1/2,
\\\label{eq:T92+CpG_r1r2}
r_{\textsc y} = r \mu ((\eta(0),\eta(1))=(\cc,\gg))&= & -r_{\textsc r}= r \mu ((\eta(-1),\eta(0))=(\cc,\gg)),\\\label{eq:T92+CpG_mu-a-t}
\mu(\eta(0)=\aa) &=& 
\frac{1-\theta}{2}+\frac{r_{\textsc y}}{v+w}=\mu(\eta(0)=\tt),\\ \label{eq:T92+CpG_mu-c-g}
\mu(\eta(0)=\cc) &=& 
\frac{\theta}{2}+\frac{r_{\textsc r} }{v+w}=\mu(\eta(0)=\gg). 
\end{eqnarray*}
Thus  \eqref{eq:a=t}--\eqref{eq:c=g}
are satisfied for the
T92+$\cc$p$\gg$ model with cut-and-paste mechanism. 
\end{proof}  
Thanks to Proposition \ref{prop:RN+YpR_mu-a-t-c-g}, it is possible 
to relax the assumptions of 
Proposition \ref{prop:mu-a-t-c-g} to derive ergodicity:
 \begin{prop}\label{prop:mu-a-t_or_c-g}
Assume that the RN+YpR model with cut-and-paste mechanism is attractive 
with respect to the order in 
\eqref{def:order-on-AA_ctag}. 
If  $\mu_l$ and  $\mu_u$ satisfy either \eqref{eq:a=t} or \eqref{eq:c=g},
then the process is ergodic.
\end{prop}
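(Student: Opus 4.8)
The plan is to show that the two extremal invariant measures $\mu_l$ and $\mu_u$ have the same single-site marginals, and then to upgrade this to $\mu_l=\mu_u$ by the same monotone-coupling argument that underlies Proposition~\ref{prop:mu-a-t-c-g}. First note that $\mu_l$ and $\mu_u$ are both translation invariant, hence belong to $\II\cap\SS$, so Proposition~\ref{prop:RN+YpR_mu-a-t-c-g} applies to each of them. Since the right-hand sides of \eqref{eq:RN+YpR_mu-a+g}--\eqref{eq:RN+YpR_mu-c+t} involve only the substitution rates and not the measure, this immediately gives
\begin{equation*}
\mu_l(R)=\mu_u(R)=\frac{v_{\textsc r}}{v_{\textsc y}+v_{\textsc r}},\qquad \mu_l(Y)=\mu_u(Y)=\frac{v_{\textsc y}}{v_{\textsc y}+v_{\textsc r}}.
\end{equation*}

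Next I would use the stochastic domination $\mu_l\preceq\mu_u$. For the order \eqref{def:order-on-AA_ctag}, i.e. $\cc<\tt<\aa<\gg$, the single-site events $\{\eta(0)=\gg\}$ and $\{\eta(0)\neq\cc\}$ are increasing, whence $\mu_l(\eta(0)=\gg)\le\mu_u(\eta(0)=\gg)$ and $\mu_l(\eta(0)=\cc)\ge\mu_u(\eta(0)=\cc)$. Suppose \eqref{eq:c=g} holds for $\mu_l$ and $\mu_u$. Then the first inequality reads $\mu_l(\eta(0)=\cc)\le\mu_u(\eta(0)=\cc)$, which combined with the second forces $\mu_l(\eta(0)=\cc)=\mu_u(\eta(0)=\cc)$ and $\mu_l(\eta(0)=\gg)=\mu_u(\eta(0)=\gg)$; feeding this into $\mu_l(R)=\mu_u(R)$ and $\mu_l(Y)=\mu_u(Y)$ then yields $\mu_l(\eta(0)=\aa)=\mu_u(\eta(0)=\aa)$ and $\mu_l(\eta(0)=\tt)=\mu_u(\eta(0)=\tt)$. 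If instead \eqref{eq:a=t} holds, the same mechanism runs with the roles of the $\{\aa,\tt\}$ and $\{\cc,\gg\}$ densities exchanged: the inequality on $\gg$ together with $\mu_l(R)=\mu_u(R)$ gives $\mu_l(\eta(0)=\aa)\ge\mu_u(\eta(0)=\aa)$, hence $\mu_l(\eta(0)=\tt)\ge\mu_u(\eta(0)=\tt)$ by \eqref{eq:a=t}, which via $\mu_l(Y)=\mu_u(Y)$ and the inequality on $\cc$ again closes every estimate into an equality. In both cases all four single-site marginals of $\mu_l$ and $\mu_u$ coincide.

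To finish, I would invoke Strassen's theorem: since $\mu_l\preceq\mu_u$, there is a coupling $\nu$ of $\mu_l$ and $\mu_u$ carried by $\{(\eta,\xi):\eta\le\xi\}$. Fix a strictly increasing $\phi\colon\AA\to\R$ for the order \eqref{def:order-on-AA_ctag} and a site $x$; under $\nu$ the random variables $\phi(\eta(x))\le\phi(\xi(x))$ are ordered almost surely and, by the marginal equality just obtained, have the same expectation, hence coincide almost surely. As $\phi$ is injective this means $\eta(x)=\xi(x)$ $\nu$-a.s., and a union bound over the countably many sites gives $\eta=\xi$ $\nu$-a.s., i.e. $\mu_l=\mu_u$. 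For an attractive process this is precisely ergodicity. The one delicate point is the case analysis of the middle paragraph: one must check that for each admissible symmetry the two domination inequalities and the fixed $Y$/$R$ densities genuinely collapse into equality of all four marginals; once that is secured, the coupling step is routine and identical to the one used for Proposition~\ref{prop:mu-a-t-c-g}.
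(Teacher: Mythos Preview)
Your proof is correct and follows essentially the same route as the paper: use Proposition~\ref{prop:RN+YpR_mu-a-t-c-g} to get $\mu_l(R)=\mu_u(R)$ and $\mu_l(Y)=\mu_u(Y)$, combine this with the monotonicity inequalities for the extreme letters $\cc$ and $\gg$ and the assumed symmetry to force equality of all four single-site marginals, then conclude via a Strassen coupling. The only cosmetic difference is in the final step: the paper (via Step~3 of Proposition~\ref{prop:mu-a-t-c-g}) enumerates the six off-diagonal pairs and shows each has $\nu$-probability zero, whereas you use the cleaner observation that $\phi(\eta(x))\le\phi(\xi(x))$ with equal expectations forces a.s.\ equality.
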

\begin{rema}\label{rk:Ynot=R}
Under the assumption of Proposition \ref{prop:mu-a-t_or_c-g}, 
we may have  $\mu(Y)\not=\mu(R)$,
where $\II=\{\mu\}$.
\end{rema}
 Another way to look for ergodicity is to investigate the 
monotone coupling measure $\nu$ of two
 ordered probability measures $\mu_1$ and $\mu_2$. 
The probability measure $\nu$ on $X \times X$
is a \textit{monotone coupling measure} of $\mu_1$ and $\mu_2$, if its
 marginals are $\mu_1$ and $\mu_2$, and it satisfies
\begin{equation}\label{eq:nu-strassen}
\nu((\eta,\xi):\eta\leq \xi) =1.
\end{equation}
 Such a coupling exists by Strassen's Theorem since 
$\mu_1\preceq\mu_2$  (see \cite[Theorem II.2.4]{ligg:IPS}).
\begin{prop}\label{prop:nu-some-coupl-0}
Assume that $\AA$ is endowed with the order \eqref{def:order-on-AA_ctag}.
Let $\nu$ be a monotone coupling measure  of two
 translation invariant probability measures 
 $\mu_1$ and $\mu_2$ such that $\mu_1\preceq\mu_2$, and which both satisfy 
 \eqref{eq:RN+YpR_mu-a+g}--\eqref{eq:RN+YpR_mu-c+t}.
Then we have
\begin{equation}\label{eq:nu-some-coupl-0}
\nu ( (\eta(0),\xi(0))\in \{(\cc,\gg),(\tt,\gg),(\cc,\aa),(\tt,\aa)\} )=0.
\end{equation}
\end{prop}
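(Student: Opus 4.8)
The plan is to first recognize that the four listed pairs are nothing but the product set $Y\times R$ for the chosen order. Indeed, under \eqref{def:order-on-AA_ctag} we have $\cc<\tt<\aa<\gg$, so the pyrimidines $Y=\{\cc,\tt\}$ are the two smallest letters and the purines $R=\{\aa,\gg\}$ the two largest, whence
\[
\{(\cc,\gg),(\tt,\gg),(\cc,\aa),(\tt,\aa)\}=Y\times R .
\]
Consequently, proving \eqref{eq:nu-some-coupl-0} is the same as proving the single identity $\nu(\eta(0)\in Y,\,\xi(0)\in R)=0$, and the whole argument will rest on the fact that $R$ is an \emph{increasing} subset of $\AA$ for this order (i.e. $a\in R$ and $a\leq b$ imply $b\in R$).

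First I would invoke the defining property \eqref{eq:nu-strassen} of the monotone coupling: $\eta\leq\xi$ holds $\nu$-almost surely, so in particular $\eta(0)\leq\xi(0)$ almost surely. Since $R=\{\aa,\gg\}$ is increasing, its indicator $\1_{\{\cdot\in R\}}$ is a nondecreasing function of the coordinate, and therefore $\eta(0)\leq\xi(0)$ forces
\[
\1_{\{\eta(0)\in R\}}\leq\1_{\{\xi(0)\in R\}}\qquad \nu\text{-almost surely.}
\]

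Next I would integrate this pointwise inequality against $\nu$ and bring in the marginal assumptions. Because the marginals of $\nu$ are $\mu_1$ and $\mu_2$, and both satisfy \eqref{eq:RN+YpR_mu-a+g}, the two expectations coincide:
\[
\nu(\eta(0)\in R)=\mu_1(R)=\frac{v_{\textsc r}}{v_{\textsc y}+v_{\textsc r}}=\mu_2(R)=\nu(\xi(0)\in R).
\]
Thus the ordered indicators $\1_{\{\eta(0)\in R\}}\leq\1_{\{\xi(0)\in R\}}$ have equal $\nu$-expectations, which forces $\1_{\{\eta(0)\in R\}}=\1_{\{\xi(0)\in R\}}$ $\nu$-almost surely. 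Equivalently, the event $\{\eta(0)\notin R,\,\xi(0)\in R\}$ is $\nu$-null; since $\eta(0)\notin R$ means exactly $\eta(0)\in Y$, this reads $\nu(\eta(0)\in Y,\,\xi(0)\in R)=0$, which by the first paragraph is precisely \eqref{eq:nu-some-coupl-0}.

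I do not expect a serious obstacle: the argument is short and the only subtle point is conceptual, namely that the claim is really about the coarse $Y/R$ partition rather than the four individual letters, and that the decisive input is the coincidence of the $R$-marginals supplied by \eqref{eq:RN+YpR_mu-a+g}--\eqref{eq:RN+YpR_mu-c+t} combined with the monotonicity of $\1_{\{\cdot\in R\}}$. This monotonicity is exactly what the order \eqref{def:order-on-AA_ctag} guarantees by placing all of $Y$ below all of $R$; for an order in which $Y$ and $R$ interleave the same reasoning would break down, which explains the role of this particular choice.
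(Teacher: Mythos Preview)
Your proof is correct and follows essentially the same approach as the paper: both arguments use the monotone coupling to compare $\mu_1(\eta(0)\in R)$ with $\mu_2(\xi(0)\in R)$, and then invoke \eqref{eq:RN+YpR_mu-a+g} to conclude that the excess mass---which is precisely $\nu(\eta(0)\in Y,\,\xi(0)\in R)$---vanishes. The paper carries this out by explicitly enumerating the ordered pairs $(\eta(0),\xi(0))$ contributing to each marginal and subtracting, whereas you phrase it via the monotone indicator $\1_{\{\cdot\in R\}}$ and the standard ``ordered with equal means implies equal a.s.'' step; this is a cleaner packaging of the same computation.
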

This proposition applies to the coupling measure $\nu$ of the lower
and upper invariant probability measures $\mu_l$ and $\mu_u$,
when the dynamics is attractive  (with respect to the order in 
\eqref{def:order-on-AA_ctag}). 
Thus, proving that $\nu ( (\eta(0),\xi(0))\in \{(\aa,\gg),(\cc,\tt)\} )=0$ 
would imply ergodicity.
\begin{prop}\label{prop:nu-diag}
Let $\nu$ be a monotone coupling measure of 
$\mu_l$ and $\mu_u$  
when the process is attractive  with respect to the order in 
\eqref{def:order-on-AA_ctag}.  Assume that the rates satisfy 
one of the 3 following conditions, 
\begin{itemize}
\item[(a)] $r_\aa^\cc = r_\aa^\tt$; 
\item[(b)] $r_\tt^\gg = r_\tt^\aa$;
\item [(c)] $\Big((\alpha)\mbox{ and }(\gamma)\Big)$ or $(\beta)$ or $(\delta)$;
\end{itemize}
where 
\begin{eqnarray*}\label{eq:alpha}
(\alpha)&& (r_\tt^\gg - r_\tt^\aa)- r_\aa^\cc  \leq  0, \\ \label{eq:beta}
(\beta)&& 0<(r_\tt^\gg - r_\tt^\aa)- r_\aa^\cc\leq v_\tt + v_\cc+w_\gg + w_\aa, \\ \label{eq:gamma}
(\gamma)&& (r_\aa^\cc - r_\aa^\tt ) - r^\gg_\tt  \leq 0, \\ \label{eq:delta}
(\delta)&& 0< (r_\aa^\cc - r_\aa^\tt ) - r^\gg_\tt   \leq w_\tt + w_\cc+v_\gg + v_\aa.
\end{eqnarray*}
 Then we have
\begin{equation}\label{eq:for-erg-by-nu}
\nu ( (\eta(0),\xi(0))\in \{(\aa,\gg),(\cc,\tt)\} )=0,
\end{equation}
hence the RN+YpR model with cut-and-paste mechanism is ergodic.
\end{prop}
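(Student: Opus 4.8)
The plan is to show that under any of the three rate conditions (a), (b), or (c), the coupling measure $\nu$ puts no mass on the two ``problematic'' diagonal pairs $(\aa,\gg)$ and $(\cc,\tt)$, because Proposition \ref{prop:nu-some-coupl-0} already kills the four off-diagonal pairs in which $\eta(0)\neq\xi(0)$ is a genuine order-violating jump across the $\{\tt,\aa\}$ boundary. The key observation is that since $\nu$ is a monotone coupling of two invariant probability measures for the attractive RN+YpR dynamics, it is itself invariant for the basic (monotone) coupled process, and the marginals $\mu_l,\mu_u$ are invariant and translation invariant. I would therefore apply $\nu$ to the generator of the coupled process evaluated on a suitable local function and set the result to zero, extracting a linear relation among the $\nu$-probabilities of the various pairs $(\eta(0),\xi(0))$.

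First I would fix notation for the discrepancy structure: under the order \eqref{def:order-on-AA_ctag}, a pair $(\eta(0),\xi(0))$ with $\eta(0)<\xi(0)$ is a discrepancy, and \eqref{eq:nu-some-coupl-0} tells us that the only discrepancies of mass $0$ are those straddling the middle; the surviving candidate discrepancies at a single site are exactly $(\cc,\tt)$ and $(\aa,\gg)$, i.e. the within-$Y$ and within-$R$ discrepancies. I would write down the test function $f(\eta,\xi)=\1_{\{(\eta(0),\xi(0))=(\aa,\gg)\}}$ (and symmetrically for $(\cc,\tt)$), and compute $\int \LL^{\mathrm{coupled}} f\, d\nu = 0$. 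In this computation the cut-and-paste part of the generator, being a translation-invariant permutation mechanism, contributes nothing to the stationary expectation of a single-site indicator (exactly as in Remark \ref{rk:bgp}), so only the substitution rates enter. The substitution-rate expressions from Section \ref{exam:RN+YpR} then produce a balance equation relating $\nu((\aa,\gg)$ at site $0)$ to boundary terms involving the neighboring coordinates through the YpR rates $r_\tt^\gg-r_\tt^\aa$ and $r_\aa^\cc-r_\aa^\tt$, which is precisely why the quantities $(\alpha)$--$(\delta)$ appear.

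Cases (a) and (b) should then be short: if $r_\aa^\cc=r_\aa^\tt$ (resp. $r_\tt^\gg=r_\tt^\aa$), the YpR rate out of the relevant dinucleotide no longer distinguishes the two coupled configurations at the neighboring site, so the discrepancy at $0$ can only be destroyed and never created by the coupled flip, forcing its stationary mass to $0$ by a direct balance argument. The genuinely delicate case is (c): here I expect to need the explicit first-moment formulas of Proposition \ref{prop:RN+YpR_mu-a-t-c-g} to control the boundary correlation terms, combining the $(\aa,\gg)$-balance and the $(\cc,\tt)$-balance equations and using the sign/size constraints in $(\alpha)$--$(\delta)$ to conclude that each nonnegative $\nu$-mass must vanish. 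The main obstacle will be bookkeeping the coupled transition rates correctly at a discrepancy site together with its neighbor: one must verify that the chosen monotone (basic) coupling never creates a new $(\aa,\gg)$ or $(\cc,\tt)$ discrepancy out of an ordered non-discrepant pair, and that the inequalities $(\alpha)$--$(\delta)$ are exactly the conditions guaranteeing the boundary contribution has the right sign. Once \eqref{eq:for-erg-by-nu} is established, ergodicity follows immediately: combined with \eqref{eq:nu-some-coupl-0} it gives $\nu((\eta,\xi):\eta(0)=\xi(0))=1$, hence by translation invariance $\mu_l=\mu_u$, which by the attractiveness criterion of Section \ref{sect:first-AttrErg} is exactly ergodicity.
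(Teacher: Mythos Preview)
Your plan is essentially the paper's own proof: test the coupled generator against the indicators $\1_{\{(\eta(0),\xi(0))=(\aa,\gg)\}}$ and $\1_{\{(\eta(0),\xi(0))=(\cc,\tt)\}}$, use translation invariance to kill the cut-and-paste part, and read off balance equations whose sign structure is controlled by $(\alpha)$--$(\delta)$. Two small corrections. First, for case (c) you do not need the explicit first-moment formulas of Proposition~\ref{prop:RN+YpR_mu-a-t-c-g}; what you actually use is Proposition~\ref{prop:nu-some-coupl-0} to collapse several two-site $\nu$-probabilities, after which the combined $(\aa,\gg)$- and $(\cc,\tt)$-balance equation has all terms non-positive under $(\alpha)\wedge(\gamma)$, and under $(\beta)$ or $(\delta)$ one bounds a single positive term by the corresponding destruction term (the paper also notes that $(\beta)\Rightarrow(\gamma)$ and $(\delta)\Rightarrow(\alpha)$ automatically). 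Second, your phrase ``never creates a new $(\aa,\gg)$ or $(\cc,\tt)$ discrepancy out of an ordered non-discrepant pair'' is not quite right: an $(\aa,\gg)$ discrepancy at $0$ \emph{is} created from the matched pair $(\gg,\gg)$ at $0$ when there is a $(\cc,\tt)$ discrepancy at site $-1$, with rate $r_\aa^\cc-r_\aa^\tt$; this source term is precisely the only positive contribution in the balance equation, and the conditions (a), (b), (c) are exactly what make it vanish or be dominated.
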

\begin{rema}\label{rk:again-examples-ok}
This proposition gives another proof of Theorem \ref{th:ergo-first-exa}, (i), 
since  the T92+$\cc$p$\gg$ model, hence the JC+$\cc$p$\gg$ model, both 
with cut-and-paste mechanism, satisfy also 
$(\alpha)$ and $(\gamma)$ in the set (c) of conditions of
Proposition \ref{prop:nu-diag}.
\end{rema}
\subsection{Other order relations  for the  RN+YpR model with cut-and-paste  mechanism}\label{subsec:other-order}
 We chose the order \eqref{def:order-on-AA_ctag} on $\AA$, that gave  relations on the rates
$r_a^b$ for attractiveness, and eventually ergodicity. 
In Theorem \ref{th:ergo-first-exa} we saw that those relations gave attractiveness and 
ergodicity for the  JC+$\cc$p$\gg$, T92+$\cc$p$\gg$, and some RNc+YpR models with cut-and-paste mechanism.
Are there other possible orders on $\AA$ that would give attractiveness
of the RN+YpR model?
There are a priori 24 possibilities.

In what follows, we refer to the proof of Proposition \ref{prop:RN+YpR-attr}, done in Section \ref{sect:ProofsAttrErg}. 
There, we detail the coupling transitions starting from two ordered 
configurations (with respect to the order \eqref{def:order-on-AA_ctag}), 
and forbid transitions that would break this order between the configurations. 
Going  to the coupling tables in this proof, we see that we cannot take an order relation that
would `separate' the values in $Y$ and $R$ : Let us try for instance $\cc<\aa<\tt<\gg$; 
then we cannot forbid the transition from $(\cc,\aa)$ to $(\tt,\aa)$.
This fact forbids 16 possibilities of order. 

Then, once we do not separate the values in $Y$ and $R$, we are left 
with the following 8 possibilities, 
%
\begin{eqnarray*}
(O1) && \cc < \tt < \aa < \gg, \\ 
(O2) && \gg < \aa < \tt < \cc, \\ 
(O3) && \tt < \cc < \aa < \gg, \\ 
(O4) && \tt < \cc < \gg < \aa, \\ 
(O5) && \cc < \tt < \gg < \aa, \\ 
(O6) && \aa < \gg < \cc < \tt, \\ 
(O7) && \aa < \gg < \tt < \cc, \\ 
(O8) && \gg < \aa < \cc < \tt,
\end{eqnarray*} 
with the attractiveness conditions they induce, 
by proceeding as 
in Proposition \ref{prop:RN+YpR-attr} and doing the ad-hoc permutations. 
Indeed, there we worked with the order 
\eqref{def:order-on-AA_ctag}, that we now denote as
order \textit{(O1)}; if we write it as $1<2<3<4$, the attractiveness 
conditions \eqref{eq:cond-rate-c}--\eqref{eq:cond-rate-g}
are written 
\begin{eqnarray}
\label{eq:other-type_cond-rate-c}0&=&r_1^3 = r_1^4, \\
\label{eq:other-type_cond-rate-t}r_2^3 &\leq& r_2^4,\\
\label{eq:other-type_cond-rate-a}r_3^1 &\geq& r_3^2, \\
\label{eq:other-type_cond-rate-g}r_4^1 = r_4^2&=&0,
\end{eqnarray} 
 and the conditions in Proposition \ref{prop:nu-diag} become \\
\begin{equation}\label{eq:other-conditions} (\widetilde a) \,\, r_3^1 = r_3^2 \,; \quad
(\widetilde b) \,\, r_2^4 = r_2^3\,; \quad
(\widetilde c) \,\,\Big((\widetilde\alpha)\,\, and \,\,(\widetilde\gamma)\Big)\,\, 
or \,\,(\widetilde\beta)\,\, or \,\,(\widetilde\delta); 
\end{equation}
where 
\begin{eqnarray}\label{eq:other-alpha}
(\widetilde\alpha)&& (r_2^4 - r_2^3)- r_3^1  \leq  0, \\ \label{eq:other-beta}
(\widetilde\beta)&& 0<(r_2^4 - r_2^3)- r_3^1\leq v_2 + v_1+w_4 + w_3, \\ \label{eq:other-gamma}
(\widetilde\gamma)&& (r_3^1 - r_3^2 ) - r^4_3  \leq 0, \\ \label{eq:other-delta}
(\widetilde\delta)&& 0< (r_3^1 - r_3^2 ) - r^4_3   \leq w_2 + w_1+v_4 + v_3.
\end{eqnarray} 
If we take also into account the constraint that
we want to keep the result for the JC+$\cc$p$\gg$ and T92+$\cc$p$\gg$ 
models with cut-and-paste mechanism, then,
 among the 7 possibilities after \textit{(O1)}, only one is possible, 
  which is \textit{(O2)}, that is,
 $\gg<\aa<\tt<\cc$. \\
 But the other orders enable to deal with other dynamics, for instance the 
RNc+YpR model with cut-and-paste mechanism, for which
 we can now derive attractiveness conditions then prove Theorem \ref{th:ergo-first-exa}, (ii). 
\begin{lemm}\label{lem:attra-RNc+YpR} 
 The RNc+YpR model is attractive if its rates satisfy either \eqref{eq:attra-rates-RNc+YpR-1}
 or \eqref{eq:attra-rates-RNc+YpR-2}.
\end{lemm}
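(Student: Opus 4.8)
The plan is to prove attractiveness of the RNc+YpR model under either of the two rate conditions \eqref{eq:attra-rates-RNc+YpR-1}--\eqref{eq:attra-rates-RNc+YpR-2} by exhibiting, for each case, an order on $\AA$ drawn from the list \textit{(O1)}--\textit{(O8)} whose induced attractiveness conditions are satisfied by the RNc+YpR rates. Since Proposition \ref{prop:RN+YpR-attr} establishes attractiveness of the general RN+YpR model under the conditions \eqref{eq:cond-rate-c}--\eqref{eq:cond-rate-g} written for the order \eqref{def:order-on-AA_ctag} (equivalently \eqref{eq:other-type_cond-rate-c}--\eqref{eq:other-type_cond-rate-g} for a generic order $1<2<3<4$), the entire task reduces to matching the RNc+YpR parametrization \eqref{rates:RNc+YpR-1}--\eqref{rates:RNc+YpR-4} against one of these eight ordered sets of inequalities.

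First I would treat the case \eqref{eq:attra-rates-RNc+YpR-1}, namely $r_{\textsc u}\leq r_{\textsc w}$ and $r_{\textsc s}=r_{\textsc v}=0$. Translating through \eqref{rates:RNc+YpR-1}--\eqref{rates:RNc+YpR-4} gives $r_\aa^\tt=r_\tt^\aa=r_{\textsc u}$, $r_\aa^\cc=r_\tt^\gg=r_{\textsc w}$, and $r_\cc^\aa=r_\cc^\gg=r_\gg^\tt=r_\gg^\cc=0$. I would check that these satisfy the conditions \eqref{eq:cond-rate-c}--\eqref{eq:cond-rate-g} attached to order \textit{(O1)} $=\eqref{def:order-on-AA_ctag}$: indeed \eqref{eq:cond-rate-c} reads $r_\cc^\gg=r_\cc^\aa=0$ (satisfied since $r_{\textsc v}=r_{\textsc s}=0$), \eqref{eq:cond-rate-g} reads $r_\gg^\cc=r_\gg^\tt=0$ (again from $r_{\textsc v}=r_{\textsc s}=0$), while \eqref{eq:cond-rate-t} reads $r_\tt^\aa\leq r_\tt^\gg$, i.e. $r_{\textsc u}\leq r_{\textsc w}$, and \eqref{eq:cond-rate-a} reads $r_\aa^\tt\leq r_\aa^\cc$, again $r_{\textsc u}\leq r_{\textsc w}$. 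Both hold by hypothesis, so Proposition \ref{prop:RN+YpR-attr} directly yields attractiveness.

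Next I would treat the case \eqref{eq:attra-rates-RNc+YpR-2}, namely $r_{\textsc s}\leq r_{\textsc v}$ and $r_{\textsc u}=r_{\textsc w}=0$. Here the roles of the $Y$ and $R$ substitution rates are swapped relative to the first case, so order \textit{(O1)} will generally fail and I expect to need a different order from the list. The natural candidate is one that interchanges the purine/pyrimidine roles symmetrically; proceeding as prescribed in Section \ref{subsec:other-order}, I would identify which of \textit{(O2)}--\textit{(O8)} has its induced conditions \eqref{eq:other-type_cond-rate-c}--\eqref{eq:other-type_cond-rate-g} met by the substituted values $r_\cc^\aa=r_\gg^\tt=r_{\textsc s}$, $r_\cc^\gg=r_\gg^\cc=r_{\textsc v}$, $r_\aa^\tt=r_\aa^\cc=r_\tt^\aa=r_\tt^\gg=0$, and verify that the required inequality collapses to $r_{\textsc s}\leq r_{\textsc v}$. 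The main (and essentially only) obstacle is the bookkeeping of this order selection: one must correctly perform the \emph{ad hoc} permutation relabeling the four nucleotides and confirm that the strand-complementarity identities \eqref{rates:RNc+YpR-1}--\eqref{rates:RNc+YpR-4} make the two ``transition-to-extreme-element'' rates vanish while leaving the two intermediate inequalities consistent with a single hypothesis $r_{\textsc s}\leq r_{\textsc v}$. Once the correct order is pinned down, attractiveness follows immediately from Proposition \ref{prop:RN+YpR-attr} applied with that order, completing the proof.
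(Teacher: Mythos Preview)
Your approach is correct and coincides with the paper's: verify the generic attractiveness conditions \eqref{eq:other-type_cond-rate-c}--\eqref{eq:other-type_cond-rate-g} for the RNc+YpR rates against the candidate orders \textit{(O1)}--\textit{(O8)}. Your treatment of case \eqref{eq:attra-rates-RNc+YpR-1} via order \textit{(O1)} is exactly what the paper does (it also notes that \textit{(O2)} works); for case \eqref{eq:attra-rates-RNc+YpR-2} the paper simply names the orders \textit{(O4)} and \textit{(O6)} as the ones that yield the desired conditions, which is the bookkeeping step you left to be filled in.
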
 
\begin{proof}
We have to check conditions \eqref{eq:other-type_cond-rate-c}--\eqref{eq:other-type_cond-rate-g}
respectively for the orders $(O1)$ to $(O8)$: The orders $(O1)$ and $(O2)$ yield 
\eqref{eq:attra-rates-RNc+YpR-1}, the orders $(O4)$ and $(O6)$ yield 
\eqref{eq:attra-rates-RNc+YpR-2}, while the other orders yield a trivial case, where all the rates are equal to 0.
\end{proof}
\begin{proof} \textit{(of Theorem \ref{th:ergo-first-exa}, (ii)).}
The two possible sets of rates of the attractive RNc+YpR   model
with cut-and-paste mechanism (that is \eqref{eq:attra-rates-RNc+YpR-1}
 or \eqref{eq:attra-rates-RNc+YpR-2}, see Lemma \ref{lem:attra-RNc+YpR}) 
satisfy $(\widetilde\alpha)$ and $(\widetilde\gamma)$ in the set $(\widetilde c)$ of the above conditions \eqref{eq:other-conditions}, for the respective orders $(O1)$ and $(O4)$.
\end{proof} 
%
%
%
\section{Proofs through generalized duality} \label{sect:ProofsDuality}
 To prove Theorems~\ref{theo:Ergo} and \ref{theo:Ergo2} we proceed as follows: 
In Section \ref{sect:Graph},
we provide a graphical construction of the process, which yields a generalized dual of the process.
 Then this dual is dominated by a branching process, for which we derive a condition for extinction.
 This one implies exponential ergodicity of the process; see Section \ref{sect:ProofThErgoDual}. \\
 Although our proofs are quite similar in spirit to those of \cite{ferrari:ErgoSpinStirring},
 we chose to give details for the sake of completeness, and to highlight the places were
 they are different.  
\subsection{Graphical constructions and dual process}\label{sect:Graph} 
We  adapt to
our  context the graphical constructions of  \cite{ferrari:ErgoSpinStirring}. 
We start in section \ref{sect:SubsGraphRepr} with the  substitution  process
in our  two different cases: either  the minimal 
substitution rate is  positive,  or the  pregenerator $\LL_1$  is
decomposed  in  a  sum   of  pregenerators.  
Then, in Section \ref{sect:TransGraphRepr} we   provide  the   graphical  construction   of  the
cut-and-paste process. Finally, in Section \ref{sect:GeneDualProc}  we construct a (generalized) dual
process  (that  is,  a  non-Markovian dynamics) for  the  process  with
pregenerator $\LL$.  
\subsubsection{Graphical  construction of  the process  with pregenerator
  $\LL_1$} \label{sect:SubsGraphRepr}
\noindent$\bullet$ \textbf{{Under Assumption~\eqref{equa:Mpositif}:  the  minimal
   substitution rate  is positive}} 

Recall the decomposition  \eqref{equa:SpinFlipRate} 
of the rate function $c(\cdot, \cdot)$ introduced in
Section~\ref{sect:first-ergo_duality}  and the notations there.
 For $x\in\Z,a\in\AA,j\in J(a)$,   let 
\begin{equation}\label{eq:tauAj}
A_j(x,a) = \tau_x^{-1}A_j(a)
\end{equation} 
 i.e. $\eta \in A_j(x,a)$ if and only if $\tau_x\eta \in A_j(a)$.
First injecting \eqref{equa:SpinFlipRate} in \eqref{equa:genesubs1}, 
then using \eqref{equa:partition} yields a
rewriting of the pregenerator $\LL_1$ as  
\begin{eqnarray} \nonumber 
\LL_1 f (\eta)  
&=&
 \sum_{x\in \Z}  \sum_{a \in \AA}  \sum_{j \in  J(a)\setminus\{0\}} 
  \blambda_j(a) \sum_{\ell 
    \geq j} \1_{\{ \eta \in A_\ell(x,a)\}}\left[f(\eta^x_a) - f(\eta)\right] \\ \nonumber 
&& +  \sum_{x\in \Z}  \sum_{a \in \AA}  \blambda_0(a) \sum_{\ell \geq 0} \1_{\{ \eta \in
  A_\ell(x,a)\}}\left[f(\eta^x_a) - f(\eta)\right] \\ \label{equa:Multinomial}
& = & \LL_1^{b,1} f (\eta) + \LL_1^{n,1} f (\eta)
\end{eqnarray}
 where 
\begin{eqnarray}\label{equa:gene-birth1} 
\LL_1^{b,1} f (\eta)& = & \sum_{x\in  \Z} \sum_{a \in \AA} \sum_{j \in J(a)\setminus\{0\}}
 \blambda_j(a) \sum_{\ell 
    \geq j} \1_{\{ \eta \in A_\ell(x,a)\}}\big[f(\eta^x_a) - f(\eta)\big]  \\ \label{equa:gene-noise1} 
\LL_1^{n,1} f (\eta)& = &   \sum_{x \in  \Z}    \blambda_0  \sum_{a \in  \AA}
\frac{  \blambda_0(a)}{ \blambda_0} \left[  f(\eta^{x}_a) -
  f(\eta)\right].  
\end{eqnarray}
For the branching process, $\LL_1^{b,1}$ will induce the births, while $\LL_1^{n,1}$
corresponds to a ``noise'' part that will induce the deaths. Note that, contrary to 
\cite{ferrari:ErgoSpinStirring}, our rates for this noise part are not uniform (this is why
we will get a better r.h.s. in \eqref{equa:ErgoFin} than in \eqref{equa:Ergo}, 
as it will be explained in the proof of Remark
\ref{rema:assu-1}).

We now define, on the graphical representation, 
marks $(j,a)$ and $(\delta,a)$  (for $a\in\AA,j \in J(a)\setminus \{0\}$) 
that will induce respectively births and deaths
in the branching process. 
 The corresponding families of random variables are all mutually independent. \\ 
Let ${\mathfrak M} = \{ (M_u(x,a), u \geq 0)
\, : \, x \in \Z, a \in \AA \}$  be 
a family of independent Poisson point processes (PPPs) such that the
rate   of  $(M_u(x,a), u \geq 0)$  
is $\lambda (a)$ defined in~\eqref{equa:Lambda}.\\
Let $\UU = \{ (U_n(x,a),n \geq 0)
\, : \, x \in \Z, a \in \AA \}$ be 
a family of independent random variables, all uniformly distributed 
on~$[0,1]$. The $n$-th occurrence of $(M_u(x,a), u \geq 0)$
is  marked  $(j,a)$ with $j \in J(a)\setminus \{0\}$ if 
\[
  \frac{   \lambda_{j-1}(a)  -   \lambda_0(a)   }{\lambda(a)}  <
  U_n(x,a) < 
  \frac{  \lambda_{j}(a)- \lambda_0(a)}{  \lambda(a)}. 
\]  
 Thus the  $(j,a)$  marks  
 are distributed according to a PPP with rate $\blambda_{j}(a) $. 

Let ${\mathfrak M}^0 = \{ (M^0_u(x), u \geq 0) \,  : \, x \in \Z\}$ be
a family of independent PPPs such that the
rate   of  $(M^0_u(x), u \geq 0)$ 
  is  $\blambda_0$  defined by~\eqref{equa:bLambda0}.  
  Let  $\UU^0 =  \{ (U^0_n(x),n\geq 0) \, : \, x \in \Z\}$ 
  be a family of independent discrete random variables with values
in $\AA$ such that 
\begin{equation}\label{eq:U0}
\PP(U^0_n(x)=a) = \frac{\lambda_0(a)}{\overline \lambda_0}.
\end{equation}
  The $n$-th  occurrence  of   $(M^0_u(x), u  \geq  0)$  is  marked
$(\delta,a)$ if $U^0_n(x)=a$.  

 The evolution of the process $(\eta_t)_{t\geq 0}$ is now determined by
 this graphical representation as follows. 
Let $\omega \in \Omega$ be a configuration of the marked PPPs.  Fix a site
$x\in  \Z$ and  a time  $t >  0$, and  let $0<T_1  \leq  \dots \leq
T_{d-1} < t$ be the times  of the successive marks present at site $x$
in  the time  interval $[0,t]$.   Set  $T_0=0$ and  $T_d=t$.  Then  we
define $\eta_s(x)=\eta_{T_i}(x)$, for   $s \in [T_i,T_{i+1}),i<d$,
 where $\eta_{T_i}(x)$ is constructed with the following recipe.

Suppose that  the configuration at time $T_i^-$  is $\eta_{T_i^-}$. By
definition of $T_i$, a mark of $\omega$ is present at site $x$
at time~$T_i$. There are two possibilities:
\begin{itemize} 
\item[1(a).]
A   $(j,a)$-mark  with   $j  \in   J(a)\setminus  \{0\}$.   If  the
configuration $\eta_{T_i^-}$ belongs to at least 
one of the sets $A_\ell(x,a)$  with $\ell \geq j$, then substitute the
letter at  $x$ by  $a$, so that  $\eta_{T_i}=(\eta_{T_i^-})^x_a$. Otherwise,
nothing happens.
\item[1(b).]
 A $(\delta,a)$-mark. Substitute the
letter at $x$ by $a$, so that $\eta_{T_i}=(\eta_{T_i^-})^x_a$.
 Note that this substitution is independent of $\eta_{T_i^-}$, 
whence the term ``noise'' for $\LL_1^{n,1}$. 
\end{itemize}
The  fact that  this recipe  produces the  desired  substitution rates
$c(\cdot, \cdot)$  comes  from   the  rewriting~\eqref{equa:Multinomial}
of~\eqref{equa:genesubs1},  and  the   thinning  property  of  Poisson
processes. 
Moreover a percolation argument (see e.g. \cite{durrett:TenLecturesIPS}, Section 2) 
implies  that only a 
finite (random) number of sites influence the evolution of a fixed site, 
hence the previous description yields a well-defined 
dynamics.

\noindent$\bullet$ \textbf{Under  Assumptions~\eqref{equa:genesubs4}
  and~\eqref{equa:Mpositif2}: decomposition of the pregenerator as a sum}
  
Recall  the  decomposition  of  the  rate  functions  $c^{(i)}(\cdot, \cdot)$
introduced   in    Section~\ref{sect:second-ergo_duality}   and   the  
notations there. Proceeding as for \eqref{equa:Multinomial}  
gives a rewriting of the pregenerators $\LL_1^{(i)}$  and $\LL_1$ as 
\begin{eqnarray} 
  \nonumber 
  \LL_1^{(i)} f (\eta) 
  &=&   \sum_{x\in   \Z}   \sum_{a  \in   \AA}   \sum_{j   \in
      J^{(i)}(a)\setminus\{0\}}  
    \blambda_j^{(i)}(a)   \sum_{\ell   \geq    j}   \1_{\{\eta\in
    A_\ell^{(i)}(x,a)\}}\big[f(\eta^x_a) - f(\eta)\big] \\  \nonumber   
&& + \sum_{x\in   \Z}   \sum_{a  \in   \AA}
     \blambda_0^{(i)}(a)  \sum_{\ell \geq  0}
    \1_{\{  \eta \in A_\ell^{(i)}(x,a)\}}
    \big[f(\eta^x_a)  - f(\eta)\big] \\  \label{equa:Multinomial-i} 
  \LL_1  f (\eta)  
&=& \LL_1^{b,2} f (\eta) + \LL_1^{n,2} f (\eta)
\end{eqnarray}
 where 
\begin{eqnarray}\label{equa:gene-birth2} 
\LL_1^{b,2} f (\eta)& = & \sum_{i=1}^d  \sum_{x\in \Z}  \sum_{a  \in \AA}
  \sum_{j \in J^{(i)}(a)\setminus\{0\}} 
  \blambda_j^{(i)}(a)  \sum_{\ell  \geq   j}  \1_{\{   \eta  \in
  A_\ell^{(i)}(x,a)\}}\big[f(\eta^x_a) -   f(\eta)\big] \\ \label{equa:gene-noise2} 
\LL_1^{n,2} f (\eta)& = &   
  \sum_{x \in  \Z}   \blambda_{0,d}  \sum_{a \in  \AA}
  \frac{\blambda_{0,d}(a)}  {\blambda_{0,d}}  \left[ f(\eta^{x}_a)
    -     f(\eta) \right].
\end{eqnarray}
 and, as in \eqref{eq:tauAj}, for $x\in\Z,a\in\AA,j\in J^{(i)}(a)$,  
 $\eta \in A_j^{(i)}(x,a)$ if and only if $\tau_x\eta \in A_j^{(i)}(a)$. 
 
Let ${\mathfrak M}^{(i)} = \{ (M_u^{(i)}(x,a), u \geq 0)
\, : \, x \in \Z, a \in \AA \}$ be 
a family of independent PPPs such that the
rate   of  $(M_u^{(i)}(x,a), u \geq 0)$ 
is  $\lambda^{(i)} (a)$ (see \eqref{equa:Lambda-i-total}). 
 Let $\UU^{(i)} = \{ (U_n^{(i)}(x,a),n \geq 0)
 \, : \, x \in \Z, a \in \AA \}$ be 
 a family of independent random variables, all with uniform law  on~$[0,1]$.
 The 
 $n$-th occurrence of $(M_u^{(i)}(x,a), u \geq 0)$ 
  is  marked  $(j,a,i)$ with $j \in J^{(i)}(a)\setminus \{0\}$ if 
\[
  \frac{ \lambda_{j-1}^{(i)}(a) -
    \lambda_0^{(i)}(a) }{\lambda^{(i)}(a)} < 
  U_n^{(i)}(a) < 
  \frac{\lambda_{j}^{(i)}(a)-\lambda_0^{(i)}(a)}{ \lambda^{(i)}(a)}.  
\]  
Thus  the  $(j,a,i)$  marks  
are distributed according to a PPP with rate $\blambda_{j}^{(i)}(a) $. 

Let ${\mathfrak M}^{0,d} = \{ (M^{0,d}_u(x), u \geq 0) \,  : \, x \in \Z\}$ be
a family of independent PPPs such that the
rate   of  $(M^{0,d}_u(x), u \geq 0)$ 
  is  $ \blambda_{0,d}$  defined  by~\eqref{equa:branch_inf_i}.  Let
  $\UU^{0,d} = \{ (U^{0,d}_n(x),n \geq 0) \, : \, x \in 
\Z\}$ be a family of independent discrete random variables with values
in $\AA$ such that 
\begin{equation}\label{eq:U0-i}
\PP(U^{0,d}_n(x)=a) = \frac{\blambda_{0,d}(a)}{\overline \lambda_{0,d}}.
\end{equation}
  The $n$-th  occurrence  of  $(M^{0,d}_u(x), u  \geq  0)$  is  marked
$(\delta,a)$ if $U^{0,d}_n(x)=a$.  

Let $\omega \in \Omega$ be a configuration of the marked PPPs.  Fix a site
$x\in  \Z$ and  a time  $t >  0$, and  let $0<T_1  \leq  \dots \leq
T_{d-1} < t$ be the times  of the successive marks present at site $x$
in  the time  interval $[0,t]$.   Set  $T_0=0$ and  $T_d=t$.  Then  we
define  $\eta_s(x)=\eta_{T_i}(x)$,  for  $s \in  [T_i,T_{i+1})$  where
$\eta_{T_i}(x)$ is constructed with the following recipe.

Suppose that  the configuration at time $T_i^-$  is $\eta_{T_i^-}$. 
By definition of $T_i$, a mark of $\omega$ is present at site $x$ at time~$T_i$. 
There are two possibilities:
\begin{itemize} 
\item[1(a).]
A $(j,a,i)$-mark with $j \in J^{(i)}(a)\setminus \{0\}$.  If the 
configuration $\eta_{T_i^-}$ belongs to at least 
one of the sets $A_\ell^{(i)}(x,a)$  with $\ell \geq j$, then substitute the
letter at  $x$ by  $a$, so that  $\eta_{T_i}=(\eta_{T_i^-})^x_a$. Otherwise,
nothing happens.
\item[1(b).]
 A $(\delta,a)$-mark. Substitute the
letter at $x$ by $a$, so that $\eta_{T_i}=(\eta_{T_i^-})^x_a$.
\end{itemize}
%
%
\subsubsection{Graphical  construction of  the process  with pregenerator
  $\LL_2$} \label{sect:TransGraphRepr}
Once again, we  use a Harris graphical construction  based on a family
of independent  PPPs  indexed by $\Z  \times \Z$; 
it is adapted from~\cite{andjelAl:LLNExclusion}  (which deals with an exclusion process).  
 A Borel-Cantelli argument shows that  only a finite number of
Poisson processes are involved in  the computation of the evolution of
a site $x$ until a fixed time~$t$. 

Let $\NN=\{(N_u(x,y), u \geq 0) \, : \, (x,y) \in \Z \}$ be a family
of independent  PPPs  such that the  rate of  the process 
indexed by $(x,y)$ is $p(x,y)$. At each of its arrival times 
 and each site $z$ such that $x \leq
z  \leq  y $  (or  $y  \leq z  \leq  x  $ if  $x>y$),  we  put a  mark
$(\circlearrowright,x,y)$.  

Let $\omega \in \Omega$ be a configuration of the marked PPP.  Fix a site
$ z\in \Z$ and a time $t > 0$, and let $0<T_1 \leq \dots \leq
T_{d-1} < t$ be the times  of the successive marks present at site $z$
in  the time  interval $[0,t]$.   Set  $T_0=0$ and  $T_d=t$.  Then  we
define  $\eta_u(x)=\eta_{T_i}(x)$,  for  $u \in  [T_i,T_{i+1})$  where
$\eta_{T_i}(x)$ is constructed with the following recipe.

Suppose that  the configuration at time $T_i^-$  is $\eta_{T_i^-}$. By
definition of  $T_i$, a mark $(\circlearrowright,x,y)$  of $\omega$ is
present at site $z$ at time~$T_i$. There are two possibilities: 
\begin{itemize}
\item[2(a).] $ x<y$. In this case, the contents of 
  sites $x$, $x+1$, $\dots$, $y$ are right circularly permuted so that
  $\eta_{T_i} = \sigma_{x,y}(\eta_{T_i^-})$. 
\item[2(b).] $ x>y$. In this case, the contents of 
  sites $y$, $y+1$, $\dots$, $x$  are left circularly permuted so that
  $\eta_{T_i} = \sigma_{x,y}(\eta_{T_i^-})$.  
\end{itemize}
 This recipe  produces the  desired  cut-and-paste rates
given by $p(.,.)$. 

On Figure~\ref{figu:EvolTrans}, given  a configuration $\omega$  of marked
PPPs, one can see the evolution of sites $1$ to $5$.
\begin{figure}[ht!]
  \centering
  \begin{minipage}[b]{0.47\textwidth}
    \centering
    \includegraphics
    [viewport=0             0            265            200,clip,width=\textwidth]
    {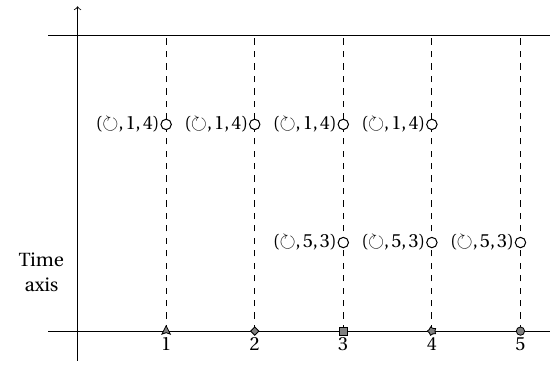} 
  \end{minipage}
  \hspace{0.375cm}
  \begin{minipage}[b]{0.47\textwidth}
    \centering
    \includegraphics
    [viewport=0             0            265            200,clip,width=\textwidth]
    {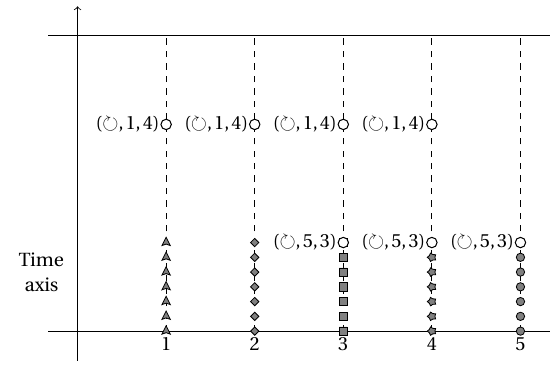} 
  \end{minipage} \\
  \vspace{0.2cm}
  \begin{minipage}[b]{0.47\textwidth}
    \centering
    \includegraphics
    [viewport=0             0            265            200,clip,width=\textwidth]
    {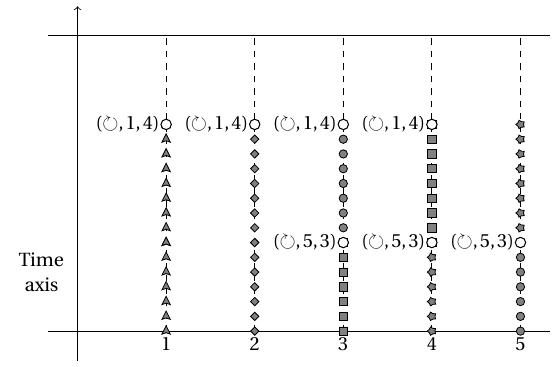} 
  \end{minipage}
  \hspace{0.375cm}
  \begin{minipage}[b]{0.47\textwidth}
    \centering
    \includegraphics
    [viewport=0             0            265            200,clip,width=\textwidth]
    {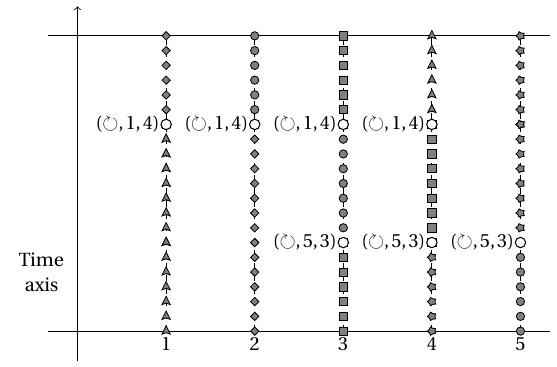} 
  \end{minipage}
  \caption{
    Evolution  of  the  sites  $1$ to  $5$  under  pregenerator
    $\LL_2$ given a realization of the marked PPPs. It should be read
    by line from the  left to the right. Here,
{\protect\tikz \protect\draw [decorate with=rectangle, paint=black] (0,0) to (0,0.1);} stands for $\aa$, 
{\protect\tikz \protect\draw [decorate with=dart, paint=black] (0,0) to (0,0.1);} for $\tt$, 
{\protect\tikz \protect\draw [decorate with=star, paint=black] (0,0) to (0,0.1);} for $\cc$ and 
{\protect\tikz \protect\draw [decorate with=diamond, paint=black] (0,0) to (0,0.1);} for $\gg$.    
    } \label{figu:EvolTrans} 
\end{figure}
\subsubsection{Construction of the dual process} \label{sect:GeneDualProc} 
Thanks     to    the     constructions    provided     in    Sections 
\ref{sect:SubsGraphRepr} and \ref{sect:TransGraphRepr},   
 to  have a  graphical  representation  of  the process  with
pregenerator  $\LL$, it  suffices  to  multiply the  rates  of $\NN$  by
$\rho$,    and to    assume    that   ${\mathfrak M}$,    ${\mathfrak M}^0$,
$\UU$, $\UU^0$ and 
$\NN$, are  mutually independent.

Now, we  turn to the construction  of the generalized  dual process of
$\LL$. 
It is a marked  branching structure
constructed on the space $\Z \times [0,+\infty)$. 

Fix a finite set of sites $D  \subset \Z$ and a time $t$. Suppose that
for the time  interval $[0,t]$, we have a  realization $\omega$ of the
marked PPPs  described above.  We reverse the  time direction calling
$\widehat u =  t-u$  and  we  construct a  space-time branching  structure
contained in $\Z  \times [\widehat 0, \widehat t]$ with  base $(D,\widehat 0)$ and
top  $(D_{\widehat  t},  \widehat  t)$.   We proceed  by  induction  with  the
following recipe.

Suppose that the spatial projection of the structure at
time $\widehat u$ is $D_{\widehat u}$.  Let $\widehat T$ be the first Poisson mark
after  $\widehat u$ involving  some site  of $D_{\widehat u}$. There  are the
following possibilities. 
\begin{itemize}
\item[1(a).] 
A  $(j,a)$-mark  involving site  $x  \in  D_{\widehat  T^-}$ with  $j  \in
J(a)\setminus\{0\}$. In this case,  the point $(x,\widehat T)$ is marked
$(j,a)$ and the set $D_{\widehat T}$ will be  
$D_{\widehat T^-} \cup \Big( \bigcup_{\ell \geq j} S_\ell(x,a)\Big)$,
where $S_\ell(x,a) = \tau_x^{-1}S_\ell(a)$. 
\item[1(b).] 
A  $(\delta,a)$-mark involving site $x \in D_{\widehat T^-}$. In this case, the
point $(x,\widehat T)$ is marked  $(\delta,a)$  and the set $D_{\widehat T}$ will be
$D_{\widehat T^-} \setminus \{x\}$. 
\item[2(a).] 
A  $(\circlearrowright,y,z)$-mark involving  $x \in D_{\widehat
    T^-}$ with $y<z$. In this  case, all the points of $D_{\widehat T}\cap
  [y,z] \times  \{T\}$ are marked with  $(\circlearrowright,y,z) $ and
  the  set $D_{\widehat T}$  will be  $ \sigma_{y,z}^{-1}(D_{\widehat  T^-}) =
  \sigma_{z,y}(D_{\widehat T^-})$.  
\item[2(b).] 
A  $(\circlearrowright,y,z)$-mark involving  $x \in D_{\widehat
    T^-}$ with $y>z$. In this  case, all the points of $D_{\widehat T}\cap
  [z,y] \times \{T\}$ are  marked with $ (\circlearrowright,y,z) $ and
  the  set $D_{\widehat T}$  will be  $ \sigma_{z,y}^{-1}(D_{\widehat  T^-}) =
  \sigma_{y,z}(D_{\widehat T^-})$.  
\end{itemize}
According to this construction, for each finite set $D$ and time $
t$,   we   are   defining   a   map   from   the   probability   space
$(\Omega,\FF,\PP)$  into the  space of  all possible  marked branching
structures on $\Z \times [\widehat 0, \widehat t]$,
\begin{equation}
\widehat  D_{[\widehat  0,\widehat  t]}^D  : (\omega,  t)  \mapsto  \left(N,
  (\widehat t_k, x_k,y_k,z_k,j_k,D_k), k=1,\dots,N \right),
\end{equation}
where 
\begin{itemize}
\item
$N$ is the number of marks in the interval $[\widehat 0, \widehat t]$,
\item
$\widehat t_k$ is the time of the occurrence of the $k$th mark,
\item
 $x_k$ is one site involved with the $k$th mark,
\item
  $y_k$ and  $z_k$ are the boundaries  of the sites  involved with the
  $k$th mark, if this mark is 
a circle arrow mark (if not $y_k=z_k=x_k$),
\item
 $j_k$ is the type of the $k$th mark   ($\delta$,   $(j,a)$, or 
$\circlearrowright$),
\item
$D_k$  is the set  of sites  in the
spatial projection  of the  structure between times  $ \widehat t_k  $ and
$\widehat t_{k+1}$.  
\end{itemize}
Note that  there are no  ambiguities if at  least two sites  $x,x' \in
D_{\widehat    T^-}$   are    involved    in   the    same   circle arrow    mark
$(\circlearrowright,y,z)$ with  $y<z$. Indeed in both  cases, the same
set of points  are marked with $(\circlearrowright,y,z) $  and the set
$D_{\widehat T}$ will be $ \sigma_{y,z}^{-1}(D_{\widehat T^-})$. It is similar
with a circle arrow   mark $(\circlearrowright,y,z)$ with $y>z$. 

One can check that if $D$ and  $D'$ are finite sets of sites such that
$D \subset D'$, then the marked branching structure 
\[
  \widehat  D_{[\widehat  0,\widehat  t]}^D (\omega, t) = \left(N,
  (\widehat t_k, x_k,y_k,z_k,j_k,D_k), k=1,\dots,N \right)
\]
is a subset of the marked branching structure 
\[
  \widehat  D_{[\widehat  0,\widehat  t]}^{D'} (\omega, t)=\left(N',
  (\widehat t'_k, x'_k,y'_k,z'_k,j'_k,D'_k), k=1,\dots,N' \right),
\]
in  the sense that  $N\leq N'$,  and for  any $k$  in $\{1,\dots,N\}$,
there exists $r(k)$ in $\{1,\dots,N'\}$ such that 
\[
  (\widehat      t_k,     x_k,y_k,z_k,j_k) = (\widehat t'_{r(k)},
  x'_{r(k)},y'_{r(k)},z'_{r(k)},j'_{r(k)}) 
  \quad \mbox{and} \quad 
  D_k \subset D'_{r(k)}.
\]
The central  idea of the construction is  exactly the same as in \cite{ferrari:ErgoSpinStirring}:
when  we go back  in time and
the generalized dual process meets a $(\delta,a)$-mark at site $x$, it is not necessary
to go further to know the value of $\eta(x)$, because it is determined
at that point by an independent random variable.
%
%
\subsection{Proofs of Theorems \ref{theo:Ergo} and \ref{theo:Ergo2}, 
and of Remarks \ref{rema:assu-1}\textit{(i)} and \ref{rema:assu-bis}\textit{(i)}} 
\label{sect:ProofThErgoDual}
\begin{proof} \textit{(of Theorem \ref{theo:Ergo}).}
The proof is  adapted from \cite{ferrari:ErgoSpinStirring}. If
the spatial projection of the dual 
structure started at time $t=\widehat 0$ is empty at time $0=\widehat t$, that
is,  $D_{\widehat t}^D= \emptyset$,  then $\eta_t(D)$  does not  depend on
$\eta_0=\eta$.  This  implies  that  a sufficient  condition  for  the
exponential ergodicity  of the  process is that,  for all  finite set $D$,
there exists positive constants $\alpha_1$, $\alpha_2$ such that 
\begin{equation} \label{equa:ErgoExpo}
  \PP(D_{\widehat t}^D= \emptyset) \leq \alpha_1 |D| \ee^{-t\alpha_2}.
\end{equation}
Hence, we are interested in  the evolution of the cardinal of  $D_{\widehat
  u}^D$. Since the  marks coming from the cut-and-paste process do
not change  the cardinal  of $D_{\widehat u}^D$  along time, they  are not
involved in the following.  

The process   $|D_{\widehat  u}^D|$ can  be  dominated  by a  
branching process $Z_u^{|D|} \in \N$,  that is, $ |D_{\widehat u}^D| \leq
Z_u^{|D|}  $ for  all  $u$  with probability  one.  This branching process is defined
as follows.    
At rate  $\blambda + \blambda_0$   
(defined in \eqref{equa:branch_sup-sup} and~\eqref{equa:bLambda0}) each  branch dies 
 and is  replaced by  either $s$  new  branches with
probability $\blambda/(\blambda + \blambda_0)$ or $0$ new
branches with probability $\blambda_0/(\blambda + \blambda_0)$.  

   Indeed, in   the    generalized   dual    process    presented   in
Section~\ref{sect:GeneDualProc}, a  site $x$ is  removed from $D_{\widehat
u}^D$ at time $\widehat T$  when a $(\delta,a)$-mark  appears at
$(x,\widehat T)$.  The  $(\delta,a)$-marks are distributed according to a PPP with rate $\lambda_0(a)$.
 Therefore, the total rate at which site $x$ is  removed from $D_{\widehat u}^D$ is  
 $\sum_{a \in  \AA}\lambda_0(a)=\blambda_0 $. 
Hence in the dominating branching process, a branch dies at rate $\blambda_0$. 

Now, we focus on the apparition of  new branches. In   the    generalized   dual    process,  
when a $(j,a)$-mark appears  at $(x,\widehat T)$, a maximum number of 
 $| \bigcup_{\ell \geq j} S_j(x,a)|=s_j(a)$ sites  might  be added  to  $D_{\widehat u}^D$.
 This happens at rate $\blambda_j (a)$. 
  Therefore for the dominating branching process, we use the bounds 
$\blambda_j (a)\leq \blambda$ and $s_j(a)\leq s$, so that
 a branch is replaced by $s$ branches at rate $\blambda$.  

The  initial state  of the  branching process  is  $Z_0^{|D|}=|D|$.  A
sufficient  condition for  \eqref{equa:ErgoExpo} is  that  the average
number of  branches created at each  branching is less  than $1$.  
This happens when 
 \begin{equation}\label{equa:condErgoExpo}
\frac{ s \blambda}
{  \blambda + \blambda_0} < 1.
\end{equation} 
\end{proof}
\begin{proof} \textit{(of Theorem \ref{theo:Ergo2}).}
The construction in this case of the generalized dual process follows the
line  of construction in Section \ref{sect:GeneDualProc},   except  that  one
should replace 1(a) by 
\begin{itemize}
\item[1(a).] 
A  $(j,a,i)$-mark  involving site  $x  \in  D_{\widehat  T^-}$ with  $j  \in
J^{(i)}(x,a)\setminus\{0\}$. In this case,  the point $(x,\widehat T)$ is marked
$(j,a,i)$ and the set $D_{\widehat T}$ will be $D_{\widehat T^-} \cup S^{(i)}(x,a)$,
where $S^{(i)}(x,a) = \tau_x^{-1}S^{(i)}(a)$. 
\end{itemize}
To prove \eqref{equa:ErgoFin} in Theorem~\ref{theo:Ergo}, we said that 
 a branch in  the dominating
branching process dies  and is either replaced by  $s$ or $0$ branches.
To prove here \eqref{equa:ErgoFin2} we say that
a branch in the dominating branching process dies at rate 
$\Theta=\overline \lambda_{0,d} + \sum_{i=1}^d \blambda^{(i)}$ 
(recall \eqref{equa:branch_inf_i} and \eqref{equa:branch_sup-sup_i}), 
and is either replaced by $s^{(1)}$, $\dots$, $s^{(d)}$ or $0$ branches
at  respective rates  $ \blambda^{(1)} /  \Theta$, $\dots$,  $
\blambda^{(d)} / \Theta$ and $ \blambda_{0,d}/\Theta$. 

A sufficient condition for \eqref{equa:ErgoExpo} is now
\begin{equation}\label{equa:condErgoExpo-i}
  \sum_{i=1}^d \frac{s^{(i)}  \blambda^{(i)}}{\Theta} < 1,
\end{equation} 
which is equivalent to \eqref{equa:ErgoFin2}. 
\end{proof}
 \begin{proof} \textit{(of Remarks \ref{rema:assu-1}(i) and \ref{rema:assu-bis}(i)).}
 To recover analogous results to Theorems 2.1 and 2.2 in \cite{ferrari:ErgoSpinStirring}
requires two main changes in the steps to prove Theorems \ref{theo:Ergo} and \ref{theo:Ergo2}:
We have to take a `uniform noise' when rewriting the generator $\LL_1$, 
then to take a different bound to define the birth rates of the branching process. 
We define the quantities we have to modify with an upper index ${\textsc F}$ 
(we keep the other ones unchanged), and consider in parallel the two results.

 We have to replace the definitions of $\blambda_0(a)$  in \eqref{equa:blambda}, 
and of $\blambda_{0}^{(i)}(a)$ in \eqref{equa:blambda_i}, by
\begin{equation}\label{def:blambda_0a-bis}
\blambda_0^{\textsc F}(a)=\lambda_0(a)-m,\qquad  \blambda_{0}^{(i),{\textsc F}}(a)=\lambda_{0}^{(i)}(a)-m^{(i)},
\end{equation}
the one of $\lambda (a)$ in \eqref{equa:Lambda}, and of $\lambda^{(i)} (a)$ 
in \eqref{equa:Lambda-i-total}, by
\begin{eqnarray}\label{equa:Lambda-bis}
  \lambda^{\textsc F} (a)&=&\max\big\{\lambda_j(a) \, : \, j \in J(a) \big\} -
  m = \lambda _{|J(a)| -1}- m, \\ \label{equa:Lambda-i-total-bis}
  \lambda^{(i),{\textsc F}} (a)&=&\max\big\{\lambda_j^{(i)}(a) \, : \, 
  j \in J^{(i)}(a) \big\} - m^{(i)} = \lambda^{(i)} _{|J^{(i)}(a)|-1}- m^{(i)};
  \end{eqnarray}
  also take, instead of \eqref{equa:bLambda0} and of \eqref{equa:branch_inf_i},
  \begin{equation}\label{equa:bLambda0-bis}
\blambda_0^{\textsc F}=|\AA| m,\qquad
    \blambda_{0,d}^{\textsc F}  = |\AA| \sum_{i=1}^d m^{(i)},
\end{equation}
and, instead of \eqref{equa:branch_sup-sup} and of \eqref{equa:branch_sup-sup_i},
\begin{eqnarray} \label{equa:branch_sup}
   \blambda^{\textsc F} &=& 
    \max_{a\in \AA} \Big(\blambda_0^{\textsc F}(a)
    + \sum_{j \in  J(a)\setminus\{0\}}   \blambda_j (a)\Big)
    =  \max_{a\in \AA}\lambda^{\textsc F} (a)
    =K-m,\\ \label{equa:branch_sup_i}
    \blambda^{(i),{\textsc F}} 
    &=&  \max_{a\in \AA} \Big(\blambda_{0}^{(i),{\textsc F}}(a)
    + \sum_{j \in  J^{(i)}(a)\setminus\{0\}} \blambda_j^{(i)} (a)\Big)
    =   \max_{a\in \AA}\lambda^{(i),{\textsc F}} (a)
    =K^{(i)}-m^{(i)}.
\end{eqnarray} 
Then, instead of \eqref{equa:Multinomial} and of \eqref{equa:Multinomial-i}, 
rewrite the pregenerator $\LL_1$ as
\begin{eqnarray} \nonumber 
\LL_1 f (\eta)  
& = & \sum_{x\in  \Z} \sum_{a \in \AA} \sum_{j \in J(a)\setminus\{0\}}
\blambda_j(a) \sum_{\ell 
    \geq j} \1_{\{ \eta \in A_\ell(x,a)\}}\big[f(\eta^x_a) - f(\eta)\big]  \\ \nonumber  
&&  +  \sum_{x\in  \Z} \sum_{a \in \AA} 
 \blambda_0^{\textsc F}(a) \sum_{\ell\geq 0} \1_{\{ \eta \in A_\ell(x,a)\}}\big[f(\eta^x_a) - f(\eta)\big]  \\    
    \label{equa:Multinomial-bis}
&&  +  \sum_{x \in  \Z}  \blambda_0^{\textsc F} \sum_{a \in  \AA}
\frac{1}{|\AA|} \left[  f(\eta^{x}_a) -
  f(\eta)\right].  
\end{eqnarray}
and
\begin{eqnarray} \nonumber 
  \LL_1  f (\eta)  
&=&  \sum_{i=1}^d  \sum_{x\in \Z}  \sum_{a  \in \AA}
  \sum_{j \in J^{(i)}(a)\setminus\{0\}} 
     \blambda_j^{(i)}(a)  \sum_{\ell  \geq   j}  \1_{\{\eta  \in
  A_\ell^{(i)}(x,a)\}}\big[f(\eta^x_a) -   f(\eta)\big] \\ \nonumber  
&&  + \sum_{i=1}^d  \sum_{x\in \Z}  \sum_{a  \in \AA}
     \blambda_{0}^{(i),{\textsc F}}(a)  \sum_{\ell  \geq 0}  \1_{\{\eta  \in
  A_\ell^{(i)}(x,a)\}}\big[f(\eta^x_a) -   f(\eta)\big]\\ \label{equa:Multinomial-bis-i}
&& +   
  \sum_{x \in  \Z}    \blambda_{0,d}^{\textsc F} \sum_{a \in  \AA}
  \frac{1}{|\AA|} \left[ f(\eta^{x}_a) - f(\eta) \right].
\end{eqnarray}
In the graphical construction, replace the definitions \eqref{eq:U0} and \eqref{eq:U0-i} by
\begin{equation}\label{eq:U0-bis}
\PP(U^{0,{\textsc F}}_n(x)=a) = \frac{1}{|\AA|};\qquad 
\PP(U^{0,d,{\textsc F}}_n(x)=a) = \frac{1}{|\AA|},
\end{equation}
so that these discrete random variables become uniform. 
Finally, the conditions for extinction \eqref{equa:condErgoExpo} 
and \eqref{equa:condErgoExpo-i} become
\begin{equation}\label{equa:condErgoExpo-bis}
\frac{ s \blambda^{\textsc F}}
{  \blambda^{\textsc F} + \blambda_0^{\textsc F}} < 1, \qquad
 \frac{\sum_{i=1}^d s^{(i)}  \blambda^{(i),{\textsc F}} }{\blambda_{0,d}^{\textsc F}+\sum_{i=1}^d \blambda^{(i),{\textsc F}}}<1.
\end{equation} 
\end{proof}
%
%
%
%
\section{Proofs through attractiveness  
} \label{sect:ProofsAttrErg}  
\begin{proof} \textit{(of Proposition \ref{prop:mu-a-t-c-g}).}
The lower and  upper invariant probability measures are 
$\mu_l=\lim_{t\to\infty}\delta_{\underline\cc}S(t)$ 
and $\mu_u=\lim_{t\to\infty}\delta_{\underline\gg}S(t)$, 
where $\delta_{\underline\cc}$ (resp. $\delta_{\underline\gg}$)
denotes the Dirac measure on the configuration $\eta$ 
such that $\eta(x)=\cc$ (resp. $\eta(x)=\gg$)
for all $x\in\Z$.  They are translation invariant. 
To prove ergodicity, we have to show that $\mu_l=\mu_u$. 
\\

\noindent $\bullet$ \textit{Step 1:} We derive consequences of attractiveness.\\
Note that the functions 
$\phi (\eta)=\1_{\{\eta(0)\geq\gg\}}$
and $\varphi (\eta)=\1_{\{\eta(0)>\cc\}}$ belong to $\MM$, 
and since $\cc$ and $\gg$ are respectively the smallest and largest elements of $\AA$
with respect to the order \eqref{def:order-on-AA_ctag}, we have 
$\phi (\eta)=\1_{\{\eta(0)=\gg\}}$
and $\varphi (\eta)=1-\1_{\{\eta(0=\cc\}}$,
hence
\begin{equation}\label{eq:c=g-ord}
\mu_l(\eta(0)=\gg)\leq\mu_u(\eta(0)=\gg) \qquad \mbox{and}\qquad 
\mu_u(\eta(0)=\cc)\leq\mu_l(\eta(0)=\cc).
\end{equation}
Similarly, the function 
$\psi (\eta)=\1_{\{\eta(0)>\tt\}}$
belongs to $\MM$, and, by the order \eqref{def:order-on-AA_ctag}, it satisfies
$\psi (\eta)=\1_{\{\eta(0)=\aa\}}+\1_{\{\eta(0)=\gg\}}=1-\1_{\{\eta(0)=\cc\}}-\1_{\{\eta(0)=\tt\}}$.
We thus have that 
\begin{eqnarray}\label{eq:psi-ag} 
\mu_l(\eta(0)=\aa)+\mu_l(\eta(0)=\gg)&\leq&\mu_u(\eta(0)=\aa)+\mu_u(\eta(0)=\gg),\\
\label{eq:psi-ct} 
\mu_l(\eta(0)=\cc)+\mu_l(\eta(0)=\tt)&\geq&\mu_u(\eta(0)=\cc)+\mu_u(\eta(0)=\tt). 
\end{eqnarray}
\noindent $\bullet$ \textit{Step 2:} We take into account the assumptions on the rates.\\
Combining \eqref{eq:c=g-ord}  with \eqref{eq:c=g} implies
\begin{equation}\label{eq:c=g-coupl}
\mu_l(\eta(0)=\cc)=\mu_u(\eta(0)=\cc)=\mu_l(\eta(0)=\gg)=\mu_u(\eta(0)=\gg).
\end{equation}
Then combining \eqref{eq:c=g-coupl}  first with \eqref{eq:psi-ag}, 
and then with \eqref{eq:psi-ct} yields
\begin{eqnarray}
\label{eq:psi-a} 
\mu_l(\eta(0)=\aa)&\leq&\mu_u(\eta(0)=\aa), \\  
\label{eq:psi-t} 
\mu_l(\eta(0)=\tt)&\geq&\mu_u(\eta(0)=\tt). 
\end{eqnarray}
Finally, combining  \eqref{eq:psi-a}--\eqref{eq:psi-t} with \eqref{eq:a=t} implies
\begin{equation}\label{eq:a=t-coupl}
\mu_l(\eta(0)=\tt)=\mu_u(\eta(0)=\tt)=\mu_l(\eta(0)=\aa)=\mu_u(\eta(0)=\aa).
\end{equation}
\noindent $\bullet$ \textit{Step 3:}
We now proceed in the same spirit as in Corollary II.2.8 in \cite{ligg:IPS}.
Let $\nu$ be a monotone coupling measure of $\mu_l$ and $\mu_u$.
It thus has to satisfy
\begin{equation}\label{reduction}
\nu((\eta,\xi):\eta(0)\neq \xi(0)) = \nu((\eta(0),\xi(0)) 
\in \{(\cc,\tt),(\cc,\aa),(\cc,\gg), (\tt, \aa), (\tt, \gg), (\aa,\gg) \}),
\end{equation}
so that by \eqref{eq:c=g-coupl},
\begin{eqnarray}\label{eq:nu-c}
\nu((\eta,\xi):\eta(0)=\xi(0)=\gg)&=&\mu_l(\eta(0)=\gg)=\mu_u(\xi(0)=\gg),\\ \label{eq:nu-g1}
\nu((\eta,\xi):\eta(0)=\xi(0)=\cc)&=&\mu_u(\xi(0)=\cc)=\mu_l(\eta(0)=\cc).
\end{eqnarray}
By  \eqref{reduction}, \eqref{eq:nu-c} and \eqref{eq:nu-g1}, and because 
$\cc$ and $\gg$ are respectively the smallest and largest elements of $\AA$
with respect to the order \eqref{def:order-on-AA_ctag},
\begin{eqnarray}\nonumber
\nu((\eta,\xi):\eta(0)\not=\gg,\xi(0)=\gg)
&=& \nu((\eta(0),\xi(0))\in\{(\cc,\gg),(\tt,\gg),(\aa,\gg)\})\\\label{eq:nu-diff-eg-g}
&=&\mu_u(\xi(0)=\gg)-\nu(\eta(0)=\xi(0)=\gg)=0,\\\nonumber
\nu((\eta,\xi):\eta(0)=\cc,\xi(0)\not=\cc)
&=& \nu((\eta(0),\xi(0))\in\{(\cc,\tt),(\cc,\aa),(\cc,\gg)\})\\\label{eq:nu-diff-eg-c}
&=&\mu_l(\eta(0)=\cc)-\nu(\eta(0)=\xi(0)=\cc)=0,
\end{eqnarray}
so that, using also again \eqref{reduction}
with respectively \eqref{eq:nu-diff-eg-c} and \eqref{eq:nu-diff-eg-g},
we obtain,
\begin{eqnarray}\nonumber
\nu((\eta,\xi):\eta(0)\not=\aa,\xi(0)=\aa)&=&
\nu((\eta(0),\xi(0))\in\{(\cc,\aa),(\tt,\aa)\})\\
\label{eq:nu-diff-eg-a1}&=&
0+\nu(\eta(0)=\tt,\xi(0)=\aa),\\
\nu((\eta,\xi):\eta(0)=\aa,\xi(0)\not=\aa)
&=&\nu((\eta(0),\xi(0))=(\aa,\gg))= 0. \label{eq:nu-diff-eg-a2}
\end{eqnarray}
Since we also have 
\begin{eqnarray}\label{eq:nu-diff-eg-a3}
\nu((\eta,\xi):\eta(0)\not=\aa,\xi(0)=\aa)
&=&\mu_u(\xi(0)=\aa)-\nu(\eta(0)=\xi(0)=\aa),\\\label{eq:nu-diff-eg-a4}
\nu((\eta,\xi):\eta(0)=\aa,\xi(0)\not=\aa)&=&\mu_l(\eta(0)=\aa)-\nu(\eta(0)=\xi(0)=\aa),
\end{eqnarray}
combining \eqref{eq:nu-diff-eg-a1}, \eqref{eq:nu-diff-eg-a2}  with \eqref{eq:a=t-coupl} implies that 
\begin{equation}\label{eq:nu-diff-eg-abis}
\nu(\eta(0)=\tt,\xi(0)=\aa)=0.
\end{equation}
Note that all the possible cases on the r.h.s. of \eqref{reduction} have probability $0$: 
$(\cc, \gg),(\tt, \gg),(\aa, \gg)$  by \eqref{eq:nu-diff-eg-g}, 
$(\cc, \tt), (\cc, \aa)$ by \eqref{eq:nu-diff-eg-c},  $(\tt, \aa)$ by \eqref{eq:nu-diff-eg-abis}. 
We conclude that 
\begin{equation}\nonumber
\nu((\eta,\xi):\eta(0)\neq \xi(0)) = 0,
\end{equation}
hence $\mu_l = \mu_u$.
\end{proof}

\noindent $\bullet$ \textbf{{Monotonicity of the RN+YpR model, 
and of the RN+YpR model with cut-and-paste mechanism}}
\begin{proof} \textit{(of Proposition \ref{prop:superp-attr}). }
We denote by  $\overline  {\LL_1}$  the  pregenerator of the monotone coupled dynamics
for the RN+YpR substitution process
(which exists by our assumption). 
Let similarly $\overline  \LL_2$ denote the  pregenerator of the coupled 
cut-and-paste dynamics through basic coupling,
that is, the same transition takes place for both copies of the process:
it is defined by, for a cylinder function $g$ on $X \times X$, 
\begin{equation} \label{equa:gene2-coupl}
\overline   \LL_2 g (\eta,\xi) = \sum_{x,y \in \Z} p(x,y) \big[
 g(\sigma_{x,y}(\eta),\sigma_{x,y}(\xi)) -g(\eta,\xi) \big].
\end{equation}
This coupled dynamics is monotone, since a transition does not change 
the way in which the values of the two processes on each site are coupled.
For the complete dynamics (that is the RN+YpR model with cut-and-paste mechanism) 
we consider the combination of both couplings, 
that is the pregenerator
\begin{equation} \label{equa:genesubs3-coupl}
\overline\LL g = \overline\LL_1 g+ \rho \overline\LL_2 g .
\end{equation}
Being the sum of two pregenerators of attractive dynamics, 
it yields also  an attractive dynamics.
We denote by $(\overline S(t),t\geq 0)$ its semi-group.
\end{proof}
\begin{proof} \textit{(of Proposition \ref{prop:RN+YpR-attr}).} 
To derive attractiveness, we construct a coupled dynamics $(\eta_t,\xi_t)_{t\geq 0}$ 
starting from ordered configurations $\eta_0\leq\xi_0$, 
through basic coupling. Then we find conditions on the rates
prohibiting the coupled transitions breaking
the increasing order between coupled configurations. We will denote by 
$\overline\LL_1$ the induced coupled generator (its existence was assumed 
in Proposition \ref{prop:superp-attr}). Similarly with \eqref{equa:genesubs1},
this generator  is defined on a cylinder 
function $f$ on $X\times X$ by 
\begin{equation} \label{equa:genesubs1-coupl}
  \overline\LL_1  f(\eta,\xi)  = \sum_{x  \in  \Z}  \sum_{(a,b)  \in \AA\times\AA}\  
  \overline c((a,b),\tau_x (\eta,\xi))
  \big[f(\eta^x_a,\xi^x_b) - f(\eta,\xi)\big].
\end{equation}
We now define the coupled rates $\overline c((a,b),\tau_x (\eta,\xi))$ for the 
transitions
$(\eta,\xi)\to(\eta^x_a,\xi^x_b)$.

By translation invariance of the dynamics, it is enough to look at site 0.
Thus we write the coupled transitions and their rates 
(according to basic coupling) in the following 3 tables.
There, we indicate with the symbol $(*)$ the coupled transitions 
to be forbidden for attractiveness;
we derive after each table the corresponding sufficient conditions
that these forbidden interactions induce. 

We rely on the rates given in Table \ref{tabl:RN+YpR}
for the RN+YpR model. 
%
\[
\begin{array}{lll|l}
  \multicolumn{3}{c|}{\mbox{Transition}}  &
  \multicolumn{1}{c}{\mbox{Rate}} \\
  \hline
  \hline
  (\cc,\cc) & \to & (\aa,\aa) & v_\aa \\
   & \to & (\gg,\gg) & v_\gg \\
  & \to & (\tt,\tt) & \min\{c(\tt,\eta),c(\tt,\xi)\} \\
  &     \to      &     (\cc,\tt)     &      c(\tt,\xi) -
  \min\{c(\tt,\eta),c(\tt,\xi)\} \\
  & \to & (\tt,\cc)  \quad (*) & c(\tt,\eta) -
  \min\{c(\tt,\eta),c(\tt,\xi)\} \\
  \hline
  (\cc,\tt) & \to & (\aa,\aa) & v_\aa\\
  & \to & (\gg,\gg) & v_\gg\\
  & \to & (\tt,\tt) & c(\tt,\eta)\\
  & \to & (\cc,\cc) & c(\cc,\xi)\\
  \hline
  (\cc,\aa) & \to & (\aa,\aa) & v_\aa\\
  & \to & (\tt,\tt) & v_\tt\\
  & \to & (\tt,\aa) & c(\tt,\eta) - v_\tt\\
  & \to & (\cc,\cc) & v_\cc\\
  & \to & (\gg,\gg) & v_\gg\\
  & \to & (\cc,\gg) & c(\gg,\xi) - v_\gg\\
  \hline
  (\cc,\gg) & \to & (\aa,\aa) & v_\aa\\
  & \to & (\cc,\aa) & c(\aa,\xi) - v_\aa\\
  & \to & (\tt,\tt) & v_\tt\\
  & \to & (\tt,\gg) & c(\tt,\eta) - v_\tt\\
  & \to & (\cc,\cc) & v_\cc\\
  & \to & (\gg,\gg) & v_\gg\\
  \end{array}
\]
%
Under basic coupling, both configurations
undergo the same transition according to the maximal possible rate, and then 
uncoupled transitions are added to fit the correct transitions for
each marginal. We detail this construction in the first 5 lines 
of this first table, 
the others are similar. There, we start from 
$(\eta(0),\xi(0))=(\cc,\cc)$. The rate for a transition from $\cc$ to 
$\aa$ or to  $\gg$ in Table \ref{tabl:RN+YpR} does not depend on the
value of the configuration on neighboring sites, therefore here
we have transitions respectively to $(\aa,\aa)$ or $(\gg,\gg)$ 
with the  rates $v_\aa$ or $v_\gg$, and these 
rates yield the correct rate for each marginal transition.
But the rate for a transition from $\cc$ to 
$\tt$ in Table \ref{tabl:RN+YpR} depends on the
value of the configuration on site $1$. Therefore the maximal rate for
a coupled transition from  $(\cc,\cc)$ to $(\tt,\tt)$ is 
$\min\{c(\tt,\eta),c(\tt,\xi)\}$, and, 
to obtain the correct rate for each marginal transition, it has to be supplemented 
by respective uncoupled transitions to $(\cc,\tt)$ and $(\tt,\cc)$, 
with  rates
$c(\tt,\xi) -  \min\{c(\tt,\eta),c(\tt,\xi)\}$ and 
$c(\tt,\eta) -  \min\{c(\tt,\eta),c(\tt,\xi)\}$. 

But a transition  to $(\tt,\cc)$ would break the
increasing order between the coupled configurations. To forbid it, 
that is, for its rate to be 0, we need to have 
\begin{equation}\label{eq:fbd-rate-1}
c(\tt,\eta)= \min\{c(\tt,\eta),c(\tt,\xi)\}.
\end{equation}
There are two possibilities, according to the value of the 
coupled configuration on $1$, which is such that $\eta(1)\leq\xi(1)$.
Either $\eta(1)\in Y=\{\cc,\tt\}$, hence $c(\tt,\eta)=w_\tt\leq c(\tt,\xi)$,
and \eqref{eq:fbd-rate-1} is satisfied;
or $\eta(1)\in R=\{\aa,\gg\}$,
hence $c(\tt,\eta)=w_\tt+r_\tt^{\eta(1)}$
and $c(\tt,\xi)=w_\tt+r_\tt^{\xi(1)}$: a necessary  and sufficient condition
for \eqref{eq:fbd-rate-1} to be satisfied is
\begin{equation*}
r_\tt^\aa\leq r_\tt^\gg.
\end{equation*}
%
\[
\begin{array}{lll|l}
  \multicolumn{3}{c|}{\mbox{Transition}}  &
  \multicolumn{1}{c}{\mbox{Rate}} \\
  \hline
  \hline
  (\tt,\tt) & \to & (\aa,\aa) & v_\aa \\
   & \to & (\cc,\cc) & \min\{c(\cc,\eta),c(\cc,\xi)\} \\
   & \to & (\cc,\tt) & c(\cc,\eta) - \min\{c(\cc,\eta),c(\cc,\xi)\}\\
   & \to & (\tt,\cc)  \quad (*) & c(\cc,\xi) - \min\{c(\cc,\eta),c(\cc,\xi)\} \\
   & \to & (\gg,\gg) & v_\gg \\
  \hline
  (\tt,\aa) & \to & (\aa,\aa) & v_\aa \\
   & \to & (\tt,\tt) & v_\tt \\
   & \to & (\cc,\cc) & v_\cc \\
   & \to & (\cc,\aa) & c(\cc,\eta) - v_\cc \\
   & \to & (\gg,\gg) & v_\gg \\
   & \to & (\tt,\gg) & c(\gg,\xi) - v_\gg \\
  \hline
  (\tt,\gg) & \to & (\aa,\aa) & v_\aa \\
  & \to & (\tt,\aa) & c(\aa,\xi) - v_\aa \\
  & \to & (\cc,\cc) & v_\cc \\
  & \to & (\cc,\gg) & c(\cc,\eta) - v_\cc \\
  & \to & (\tt,\tt) & v_\tt \\
  & \to & (\gg,\gg) & v_\gg \\
  \end{array}
\]
%
In this second table, the transition from $(\tt,\tt)$
to $(\tt,\cc)$ has to be forbidden for attractiveness, which requires 
\begin{equation}\label{eq:fbd-rate-2}
c(\cc,\xi) = \min\{c(\cc,\eta),c(\cc,\xi)\}.
\end{equation}
 We have that
 \begin{equation*}
c(\cc,\eta)= w_\cc+r_\cc^{\eta(1)}\1_{\{\eta(1)\in R\}}, \quad
c(\cc,\xi)=w_\cc+r_\cc^{\xi(1)}\1_{\{\xi(1)\in R\}},\quad\mbox{with}\quad
\eta(1)\leq\xi(1).
\end{equation*} 
Thus \eqref{eq:fbd-rate-2} is satisfied if  and only if
 \begin{equation*}
r_\cc^\aa = r_\cc^\gg = 0.
\end{equation*} 
%
\[
\begin{array}{lll|l}
  \multicolumn{3}{c|}{\mbox{Transition}}  &
  \multicolumn{1}{c}{\mbox{Rate}} \\
  \hline
  \hline 
  (\aa,\aa) & \to & (\tt,\tt) & v_\tt \\
  & \to & (\cc,\cc) & v_\cc \\
  & \to & (\gg,\gg) & \min\{c(\gg,\eta),c(\gg,\xi)\} \\
  & \to & (\aa,\gg) & c(\gg,\xi) - \min\{c(\gg,\eta),c(\gg,\xi)\} \\
  & \to & (\gg,\aa)  \quad (*)  & c(\gg,\eta) - \min\{c(\gg,\eta),c(\gg,\xi)\} \\
  \hline
  (\aa,\gg) & \to & (\aa,\aa) & c(\aa,\xi) \\
   & \to & (\gg,\gg) & c(\gg,\eta) \\
   & \to & (\tt,\tt) & v_\tt \\
   & \to & (\cc,\cc) & v_\cc \\ 
  \hline
  (\gg,\gg) & \to & (\aa,\aa) & \min\{c(\aa,\eta),c(\aa,\xi)\} \\
  & \to & (\aa,\gg) & c(\aa,\eta) - \min\{c(\aa,\eta),c(\aa,\xi)\} \\
  & \to & (\gg,\aa)  \quad (*)  & c(\aa,\xi) - \min\{c(\aa,\eta),c(\aa,\xi)\} \\
  & \to & (\tt,\tt) & v_\tt \\
  & \to & (\cc,\cc) & v_\cc \\
  \end{array}
\]
In this third table, the transitions from $(\aa,\aa)$ and from $(\gg,\gg)$ 
to $(\gg,\aa)$ have to be forbidden for attractiveness, which requires 
\begin{eqnarray}\label{eq:fbd-rate-3}
c(\gg,\eta) &=& \min\{c(\gg,\eta),c(\gg,\xi)\}, \\
\label{eq:fbd-rate-4}
c(\aa,\xi) &=& \min\{c(\aa,\eta),c(\aa,\xi)\}.
\end{eqnarray}
We have that
 \begin{eqnarray*}
c(\gg,\eta)&=&w_\gg+r_\gg^{\eta(-1)}\1_{\{\eta(-1)\in Y\}}, \,\,
c(\gg,\xi)= w_\gg+r_\gg^{\xi(-1)}\1_{\{\xi(-1)\in Y\}},\,\,\mbox{with}\,\,
\eta(-1)\leq\xi(-1),\\
c(\aa,\eta)&=& w_\aa+r_\aa^{\eta(-1)}\1_{\{\eta(-1)\in Y\}}, \,\,
c(\aa,\xi)= w_\aa+r_\aa^{\xi(-1)}\1_{\{\xi(-1)\in Y\}},\,\,\mbox{with}\,\,
\eta(-1)\leq\xi(-1).
\end{eqnarray*}
Thus \eqref{eq:fbd-rate-3} and \eqref{eq:fbd-rate-4}
are respectively
 satisfied if and only if
\begin{eqnarray*}
r_\gg^\cc = r_\gg^\tt&=&0,\\
r_\aa^\tt &\leq& r_\aa^\cc.
\end{eqnarray*} 
The proposition is proved.
\end{proof}
\begin{proof} \textit{(of Proposition \ref{prop:RN+YpR_mu-a-t-c-g}).}
We first compute
$\LL f (\eta)$ for $f (\eta)=\1_{\{\eta(0)=a\}}$, with $a\in\AA$.
 Recall that we write $c(b,\eta)$ for $c(0,b,\eta)$.
\begin{eqnarray}\nonumber
\LL_1 f (\eta)&=&\sum_{b\in\AA}c(b,\eta)[f (\eta_b^0)-f (\eta)]\\\label{eq:L1f-0}
&=&-\sum_{b\in\AA,\,b\not=a}c(b,\eta)f (\eta)+c(a,\eta)\1_{\{\eta(0)\not=a\}},\\\nonumber
\LL_2 f (\eta)&=& \sum_{x\in\Z} p(x,0) [\1_{\{\eta(x)=a\}}-\1_{\{\eta(0)=a\}}]
+\sum_{x,y\in\Z,\,x<0<y} p(x,y) [\1_{\{\eta(-1)=a\}}-\1_{\{\eta(0)=a\}}]\\\label{eq:L2f-0}
&&+\sum_{x,y\in\Z,\,y<0<x} p(x,y) [\1_{\{\eta(1)=a\}}-\1_{\{\eta(0)=a\}}].
\end{eqnarray}
Because $\mu$ is translation invariant, using \eqref{eq:L2f-0},
we have that $\int \LL_2 f (\eta)d\mu(\eta)=0$. Therefore,
by the invariance of $\mu$, we have $0=\int \LL f (\eta)d\mu(\eta)=\int \LL_1 f (\eta)d\mu(\eta)$.
Thus we only need to compute $\LL_1 f (\eta)$, 
relying on the values for the rates given in Table \ref{tabl:RN+YpR},
and using \eqref{eq:L1f-0}.
\begin{eqnarray}
\LL_1 (\1_{\{\eta(0)=\aa\}}) &=&
- \1_{\{\eta(0)=\aa\}}(v_\tt + v_\cc+w_\gg) 
- \sum_{a\in Y}\1_{\{(\eta(-1),\eta(0))=(a,\aa)\}}r_\gg^a \nonumber\\
&& + \1_{\{\eta(0)\in Y\}}v_\aa + \1_{\{\eta(0)=\gg\}}w_\aa
+ \sum_{a\in Y} \1_{\{(\eta(-1),\eta(0))=(a,\gg)\}}r_\aa^a, \label{eq:RN+YpR_L1f-0-aa}\\
\LL_1 (\1_{\{\eta(0)=\gg\}}) &=&
- \1_{\{\eta(0)=\gg\}}(w_\aa + v_\tt+v_\cc)
- \sum_{a\in Y} \1_{\{(\eta(-1),\eta(0))=(a,\gg)\}}r_\aa^a\nonumber\\
&& +  \1_{\{\eta(0)\in Y\}}v_\gg +  \1_{\{\eta(0)=\aa\}}w_\gg
+ \sum_{a\in Y} \1_{\{(\eta(-1),\eta(0))=(a,\aa)\}}r_\gg^a, \label{eq:RN+YpR_L1f-0-gg}\\
\LL_1 (\1_{\{\eta(0)=\cc\}}) &=&
- \1_{\{\eta(0)=\cc\}} (v_\aa + v_\gg+w_\tt) 
- \sum_{a\in R} \1_{\{(\eta(0),\eta(1))=(\cc,a)\}}r_\tt^a \nonumber\\
&& +  \1_{\{\eta(0)\in R\}} v_\cc +  \1_{\{\eta(0)=\tt\}}w_\cc
 + \sum_{a\in R} \1_{\{(\eta(0),\eta(1))=(\tt,a)\}} r_\cc^a, \label{eq:RN+YpR_L1f-0-cc}\\
\LL_1 (\1_{\{\eta(0)=\tt\}}) &=&
- \1_{\{\eta(0)=\tt\}}(v_\aa + w_\cc + v_\gg)
- \sum_{a\in R} \1_{\{(\eta(0),\eta(1))=(\tt,a)\}} r_\cc^a \nonumber\\
&&+ \1_{\{\eta(0)\in R\}} v_\tt + \1_{\{\eta(0)=\cc\}} w_\tt
+  \sum_{a\in R} \1_{\{(\eta(0),\eta(1))=(\cc,a)\}} r_\tt^a. \label{eq:RN+YpR_L1f-0-tt}
\end{eqnarray}
We write $0=\int \LL_1 f (\eta)d\mu(\eta)$ 
starting from \eqref{eq:RN+YpR_L1f-0-aa}--\eqref{eq:RN+YpR_L1f-0-tt}. This
gives the following linear system, whose last line states that $\mu$ is a probability measure.
\begin{equation}\label{eq:L1f-0-aa-mu}
\begin{cases}
-(v_{\textsc y}+w_\gg) \mu(\eta(0)=\aa)+ w_\aa \mu(\eta(0)=\gg) + v_\aa \mu(Y) &= r_{\textsc r},\\
w_\gg \mu(\eta(0)=\aa)  - (v_{\textsc y}+w_\aa) \mu(\eta(0)=\gg) + v_\gg \mu(Y) &= -r_{\textsc r}, \\
- (v_{\textsc r}+w_\tt) \mu(\eta(0)=\cc) + w_\cc \mu(\eta(0)=\tt)+v_\cc \mu(R) &= r_{\textsc y}, \\
w_\tt \mu(\eta(0)=\cc) - (v_{\textsc r}+w_\cc)  \mu(\eta(0)=\tt)+v_\tt \mu(R) &= -r_{\textsc y}, \\
\mu(\eta(0)=\aa) + \mu(\eta(0)=\gg) &= \mu(R), \\
\mu(\eta(0)=\cc) + \mu(\eta(0)=\tt) &= \mu(Y),  \\
\mu(\eta(0)=\aa) + \mu(\eta(0)=\gg) + \mu(\eta(0)=\cc) + \mu(\eta(0)=\tt) &= 1.
\end{cases}
\end{equation}
Combining the addition of lines 1 and 2 (taking into account lines 5 and 6)
with the last line of \eqref{eq:L1f-0-aa-mu} gives a system whose solution is 
\eqref{eq:RN+YpR_mu-a+g}--\eqref{eq:RN+YpR_mu-c+t}. We then insert those values
into \eqref{eq:L1f-0-aa-mu}. Solving the system composed by its lines 1 
and 5 (resp. its lines 3 and 6) yields \eqref{eq:RN+YpR_mu-a}, \eqref{eq:RN+YpR_mu-g}
(resp. \eqref{eq:RN+YpR_mu-c}, \eqref{eq:RN+YpR_mu-t}). 
\end{proof} 
\begin{proof} \textit{(of Proposition \ref{prop:mu-a-t_or_c-g}).}
We assume that \eqref{eq:c=g} is satisfied; the case where \eqref{eq:a=t} 
is satisfied is similar, and its proof is left to the reader.\\
We go through the 3 steps of the proof of Proposition \ref{prop:mu-a-t-c-g}: Step 1 is still valid, with
\eqref{eq:psi-ag}--\eqref{eq:psi-ct} which become 
\begin{eqnarray}\label{eq:psi-ag-more} 
\mu_l(\eta(0)=\aa)+\mu_l(\eta(0)=\gg)&=&\mu_u(\eta(0)=\aa)+\mu_u(\eta(0)=\gg),\\
\label{eq:psi-ct-more} 
\mu_l(\eta(0)=\cc)+\mu_l(\eta(0)=\tt)&=&\mu_u(\eta(0)=\cc)+\mu_u(\eta(0)=\tt).
\end{eqnarray}
because of \eqref{eq:RN+YpR_mu-a+g}--\eqref{eq:RN+YpR_mu-c+t}. Thus, in Step 2,
 \eqref{eq:c=g-coupl} is still valid, as well as
 \eqref{eq:psi-a}--\eqref{eq:psi-t}. But  combining them with 
 \eqref{eq:psi-ag-more}--\eqref{eq:psi-ct-more} implies that 
\begin{equation}\label{eq:a-and-t}
\mu_l(\eta(0)=\tt)=\mu_u(\eta(0)=\tt)\quad\hbox{and}\quad\mu_l(\eta(0)=\aa)=\mu_u(\eta(0)=\aa).
\end{equation}
Finally, Step 3 
 is valid, which ends the proof.
\end{proof}
\begin{proof} \textit{(of Proposition \ref{prop:nu-some-coupl-0}).}
Because $\nu$ is a monotone coupling measure  of  $\mu_1$ and $\mu_2$, it satisfies
\eqref{eq:nu-strassen}  and \eqref{reduction}.
Thus  we have that
\begin{equation}\label{eq:mu_l-u_c}
\begin{cases}
\mu_1(\eta(0)=\cc) &= \nu ( (\eta(0),\xi(0)) \in \{ (\cc,\cc),(\cc,\tt),(\cc,\aa),(\cc,\gg) \} ),\\
\mu_2(\xi(0)=\cc)  &= \nu ( (\eta(0),\xi(0)) = (\cc,\cc) ), 
\end{cases}
\end{equation}
\begin{equation}\label{eq:mu_l-u_t}
\begin{cases}
\mu_1(\eta(0)=\tt) &= \nu ( (\eta(0),\xi(0)) \in \{ (\tt,\tt),(\tt,\aa),(\tt,\gg)\} ),\\
\mu_2(\xi(0)=\tt)  &= \nu ( (\eta(0),\xi(0)) \in \{ (\cc,\tt),(\tt,\tt) \} ), 
\end{cases}
\end{equation}
\begin{equation}\label{eq:mu_l-u_a}
\begin{cases}
\mu_1(\eta(0)=\aa) &= \nu ( (\eta(0),\xi(0)) \in \{ (\aa,\aa),(\aa,\gg) \} ),\\
\mu_2(\xi(0)=\aa)  &= \nu ( (\eta(0),\xi(0)) \in \{ (\cc,\aa),(\tt,\aa),(\aa,\aa) \} ), 
\end{cases}
\end{equation}
\begin{equation}\label{eq:mu_l-u_g}
\begin{cases}
\mu_1(\eta(0)=\gg) &= \nu ( (\eta(0),\xi(0)) = (\gg,\gg) ),\\
\mu_2(\xi(0)=\gg)  &= \nu ( (\eta(0),\xi(0))  \in \{(\cc,\gg),(\tt,\gg),(\aa,\gg),(\gg,\gg)\} ). 
\end{cases}
\end{equation}
Hence 
\begin{eqnarray*}
\mu_1(\eta(0)\in\{\aa,\gg\} ) &=& \nu ( (\eta(0),\xi(0)) \in \{ (\aa,\aa),(\aa,\gg), (\gg,\gg) \} ),\\
\mu_2(\xi(0)\in\{\aa,\gg\} )  &=& \nu ( (\eta(0),\xi(0)) \in \{ (\cc,\aa),(\tt,\aa),(\aa,\aa),(\cc,\gg),(\tt,\gg),(\aa,\gg),(\gg,\gg) \}. )
\end{eqnarray*}
We then conclude thanks to \eqref{eq:RN+YpR_mu-a+g}--\eqref{eq:RN+YpR_mu-c+t}
that yield 
\[\mu_1(\eta(0)\in\{\aa,\gg\} ) = \mu_2(\xi(0)\in\{\aa,\gg\} )  = \frac{v_{\textsc r}}{v_{\textsc y}+v_{\textsc r}}\, .\]
Finally, \eqref{eq:mu_l-u_c}--\eqref{eq:mu_l-u_g} can be simplified in
\begin{equation}\label{eq:mu_l-u_a-c-g-t}
\begin{cases}
\mu_1(\eta(0)=\cc) &= \nu ( (\eta(0),\xi(0)) \in \{ (\cc,\cc),(\cc,\tt) \} ),\\
\mu_2(\xi(0)=\cc)  &= \nu ( (\eta(0),\xi(0)) = (\cc,\cc) ),\\
\mu_1(\eta(0)=\tt) &= \nu ( (\eta(0),\xi(0)) = (\tt,\tt) ),\\
\mu_2(\xi(0)=\tt)  &= \nu ( (\eta(0),\xi(0)) \in \{ (\cc,\tt),(\tt,\tt) \} ),\\
\mu_1(\eta(0)=\gg) &= \nu ( (\eta(0),\xi(0)) = (\gg,\gg) ),\\
\mu_2(\xi(0)=\gg)  &= \nu ( (\eta(0),\xi(0)) \in \{ (\aa,\gg),(\gg,\gg) \} ),\\
\mu_1(\eta(0)=\aa) &= \nu ( (\eta(0),\xi(0)) \in \{ (\aa,\aa),(\aa,\gg) \} ),\\
\mu_2(\xi(0)=\aa)  &= \nu ( (\eta(0),\xi(0)) = (\aa,\aa) ) .
\end{cases}
\end{equation}
\end{proof}
\begin{proof} \textit{(of Proposition \ref{prop:nu-diag}).}

\noindent$\bullet$
We compute $\overline\LL_1 f(\eta,\xi)$ for the function 
$f(\eta,\xi)=\1_{\{\eta(0)=a,\xi(0)=b\}}$, for $a,b\in\AA$.
 We will then apply this computation to $(a,b)=(\aa,\gg)$ 
 and to $(a,b)=(\cc,\tt)$. 
We have (recall the notation \eqref{equa:genesubs1-coupl})
\begin{eqnarray}\nonumber
 \overline\LL_1  f(\eta,\xi)  &=&  \sum_{(a',b')  \in \AA\times\AA}\  
  \overline c((a',b'), (\eta,\xi))
  \big[f(\eta^0_{a'},\xi^0_{b'}) - f(\eta,\xi)\big]\\\nonumber
&=&  - \sum_{(a',b')  \not= (a,b)}\  
  \overline c((a',b'), (\eta,\xi)) f(\eta,\xi)
  +
  \overline c((a,b), (\eta,\xi))
  \big[1 - f(\eta,\xi)\big]\\\label{eq:barL1-ab}
&=&  - \sum_{(a',b')  \in \AA\times\AA}\  
  \overline c((a',b'), (\eta,\xi)) f(\eta,\xi)
  +
  \overline c((a,b), (\eta,\xi))
\end{eqnarray}
where we used 
\[
f(\eta^0_{a'},\xi^0_{b'})=\1_{\{\eta^0_{a'}(0)=a,\xi^0_{b'}(0)=b\}}
=\1_{\{a'=a,b'=b\}}.
\]
\noindent$\bullet$
We now apply \eqref{eq:barL1-ab} to $(a,b)=(\aa,\gg)$,  
according to the coupling rates 
given in the tables in the proof of Proposition \ref{prop:RN+YpR-attr}, 
then using the attractiveness conditions
from Proposition \ref{prop:RN+YpR-attr}. 
\begin{eqnarray}\nonumber
 \overline\LL_1  f(\eta,\xi)  &=&  - \1_{\{\eta(0)=\aa,\xi(0)=\gg\}}  \left[c(\aa,\xi)+c(\gg,\eta)+v_\tt+v_\cc\right]
\\\nonumber
&&  + \1_{\{\eta(0)=\aa,\xi(0)=\aa\}}  \left[c(\gg,\xi)-\min\left\{c(\gg,\eta),c(\gg,\xi) \right\}\right]
\\\nonumber
&&  +\1_{\{\eta(0)=\gg,\xi(0)=\gg\}}  \left[c(\aa,\eta)-\min\left\{c(\aa,\eta),c(\aa,\xi) \right\}\right]
\\\nonumber
&=&  - \1_{\{\eta(0)=\aa,\xi(0)=\gg\}}  \left[w_\aa+\1_{\{\xi(-1)=a\in Y\}}r_\aa^a+w_\gg+0+v_\tt+v_\cc\right]  + 0 
\\\nonumber
&& +\1_{\{\eta(0)=\gg,\xi(0)=\gg\}}  \left[\1_{\{\eta(-1)=a\in Y\}}r_\aa^a
-\min\left\{\1_{\{\eta(-1)=a\in Y\}}r_\aa^a,\1_{\{\xi(-1)=b\in Y\}}r_\aa^b \right\}\right]
\\\nonumber
&=&  - \1_{\{\eta(0)=\aa,\xi(0)=\gg\}}  \left[v_\tt+v_\cc+w_\gg+w_\aa+\1_{\{\xi(-1)=\cc\}}r_\aa^\cc+\1_{\{\xi(-1)=\tt\}}r_\aa^\tt\right]  
\\\label{eq:barL1-aa-gg}
&& +\1_{\{\eta(-1)=\cc,\xi(-1)=\tt,\eta(0)=\gg,\xi(0)=\gg\}}  \left[r_\aa^\cc
-r_\aa^\tt \right].
\end{eqnarray}
For the last term in the last equality, we have used that 
since $\eta(-1)\leq\xi(-1)$, either they are both equal, or
$\eta(-1)=\cc,\xi(-1)=\tt$. \\
We now integrate \eqref{eq:barL1-aa-gg} with respect to $\nu$, using also
  translation invariance of $\nu$. 
To lighten the formulas, we use the  notation, for $x,y\in\Z$, $a_1, a_2,b_1, b_2\in\AA$,
\[ \nu\left(\begin{matrix}x\, &\, y\\
                             a_1\, &\, a_2\\
                             b_1\, &\, b_2\end{matrix}\right) 
=\nu( \eta(x)=a_1, \eta(y)=a_2,\xi(x)=b_1, \xi(y)=b_2) \]
as well as
\[ \nu\left(\begin{matrix}x\\
                             a_1\\
                             b_1\end{matrix}\right) 
=\nu( \eta(x)=a_1,\xi(x)=b_1), \quad 
\nu\left(\begin{matrix}x\, &\, y\\
                             a_1\, &\, *\\
                             b_1\, &\, b_2\end{matrix}\right) 
=\nu( \eta(x)=a_1, \xi(x)=b_1, \xi(y)=b_2). \]
We  also  use that by Proposition \ref{prop:nu-some-coupl-0} we have 
\begin{eqnarray*}
\nu \left(\begin{matrix}0\, &\, 1\\
                             \cc\, &\, \gg\\
                             \tt\, &\, \gg\end{matrix}\right)
=    \nu \left(\begin{matrix}0\, &\, 1\\
                             \cc\, &\, \gg\\
                             \tt\, &\, *\end{matrix}\right), \quad
   \nu \left(\begin{matrix}0\, &\, 1\\
                             \cc\, &\, \aa\\
                             \cc\, &\, \gg\end{matrix}\right)
=    \nu \left(\begin{matrix}0\, &\, 1\\
                             *\, &\, \aa\\
                             \cc\, &\, \gg\end{matrix}\right), \quad
     \nu \left(\begin{matrix}0\, &\, 1\\
                             \tt\, &\, \aa\\
                             \tt\, &\, \gg\end{matrix}\right)
=    \nu \left(\begin{matrix}0\, &\, 1\\
                             \tt\, &\, \aa\\
                             *\, &\, \gg\end{matrix}\right).                          
\end{eqnarray*}
We thus get 
\begin{eqnarray}
\nonumber
 0=  \int \overline\LL_1  f(\eta,\xi) d\nu 
 &=& -[v_\tt + v_\cc+w_\gg + w_\aa]\, \nu \left(\begin{matrix}0\\
                             \aa\\
                             \gg\end{matrix}\right)
                             - r_\aa^\tt \,\nu \left(\begin{matrix}0\, &\, 1\\
                             \cc\, &\, \aa\\
                             \tt\, &\, \gg\end{matrix}\right) 
 - r_\aa^\cc\, \nu \left(\begin{matrix}0\, &\, 1\\
                             *\, &\, \aa\\
                             \cc\, &\, \gg\end{matrix}\right)
  \\ \label{eq:barL1-a-g}
  && 
   - r_\aa^\tt \,\nu \left(\begin{matrix}0\, &\, 1\\
                             \tt\, &\, \aa\\
                             *\, &\, \gg\end{matrix}\right)
  + [ r_\aa^\cc - r_\aa^\tt ] \,\nu \left(\begin{matrix}0\, &\, 1\\
                             \cc\, &\, \gg\\
                             \tt\, &\, *\end{matrix}\right).                                                     
\end{eqnarray}
\noindent$\bullet$
We   similarly  apply \eqref{eq:barL1-ab} to $(a,b)=(\cc,\tt)$, 
  then also integrate  with respect to $\nu$,
using translation invariance of $\nu$.  This gives 
\begin{eqnarray}
\nonumber
 0= \int \overline\LL_1  f(\eta,\xi) d\nu 
 &=& -[w_\tt + w_\cc+v_\gg + v_\aa] \,\nu \left(\begin{matrix}0\\
                             \cc\\
                             \tt\end{matrix}\right) 
 - r^\aa_\tt  \,\nu \left(\begin{matrix}0\, &\, 1\\
                                        \cc\, &\, \aa\\
                                        \tt\, &\, \gg\end{matrix}\right)  
 - r^\aa_\tt \, \nu \left(\begin{matrix}0\, &\, 1\\
                                        \cc\, &\, *\\
                                        \tt\, &\, \aa\end{matrix}\right) \\ \label{eq:barL1-c-t}
  &&  - r^\gg_\tt  \,\nu \left(\begin{matrix}0\, &\, 1\\
                                            \cc\, &\, \gg\\
                                            \tt\, &\, *\end{matrix}\right) 
      + [ r_\tt^\gg - r_\tt^\aa]\, \nu \left(\begin{matrix}0\, &\, 1\\
                                                           *\, &\, \aa\\
                                                          \cc\, &\, \gg\end{matrix}\right).    
\end{eqnarray}
\noindent$\bullet$
Adding \eqref{eq:barL1-a-g} and \eqref{eq:barL1-c-t} gives:
\begin{eqnarray}\nonumber
0 
&=& -[v_\tt + v_\cc+w_\gg + w_\aa] \,\nu \left(\begin{matrix}0\\
                                               \aa\\
                                               \gg\end{matrix}\right)
    + [ (r_\tt^\gg - r_\tt^\aa)- r_\aa^\cc] \,\nu \left(\begin{matrix}0\, &\, 1\\
                                                                      *\, &\, \aa\\
                                                                      \cc\, &\, \gg\end{matrix}\right) 
                              \\ \nonumber
 &&  -[w_\tt + w_\cc+v_\gg + v_\aa]\,\nu\left(\begin{matrix}0\\
                                                            \cc\\
                                                            \tt\end{matrix}\right)
  + [ (r_\aa^\cc - r_\aa^\tt ) - r^\gg_\tt ] \,\nu \left(\begin{matrix}0\, &\, 1\\
                             \cc\, &\, \gg\\
                             \tt\, &\, *\end{matrix}\right)  \\ \label{eq:ag+ct} 
&&  - [r_\aa^\tt+r^\aa_\tt ] \,\nu \left(\begin{matrix}0\, &\, 1\\
                             \cc\, &\, \aa\\
                             \tt\, &\, \gg\end{matrix}\right)   
     - r_\aa^\tt \,\nu \left(\begin{matrix}0\, &\, 1\\
                             \tt\, &\, \aa\\
                             *\, &\, \gg\end{matrix}\right)
   - r^\aa_\tt \,\nu \left(\begin{matrix}0\, &\, 1\\
                             \cc\, &\, *\\
                             \tt\, &\, \aa\end{matrix}\right).                       
\end{eqnarray}
 \noindent$\bullet$
We claim that 
\begin{equation}\label{eq:ag0equivct0}
\nu ( (\eta(0),\xi(0))=(\aa,\gg) )=0\quad\mbox{if and only if}\quad
\nu ( (\eta(0),\xi(0))=(\cc,\tt) )=0.
\end{equation} 
Indeed, assuming that $\nu ( (\eta(0),\xi(0))=(\aa,\gg) )=0$, we have also
\begin{equation*}\label{eq:also0forag}
\nu\left(\begin{matrix}0\, &\, 1\\
                             \cc\, &\, \aa\\
                             \tt\, &\, \gg\end{matrix}\right) 
=\nu\left(\begin{matrix}0\, &\, 1\\
                             \tt\, &\, \aa\\
                             *\, &\, \gg\end{matrix}\right)
  =\nu\left(\begin{matrix}0\, &\, 1\\
                             *\, &\, \aa\\
                             \cc\, &\, \gg\end{matrix}\right)
                             =0.
\end{equation*} 
Hence \eqref{eq:barL1-c-t} becomes  
\begin{eqnarray*}
\nonumber
 0 &=& -[w_\tt + w_\cc+v_\gg + v_\aa] \,\nu \left(\begin{matrix}0\\
                             \cc\\
                             \tt\end{matrix}\right)  
 - r^\aa_\tt \, \nu \left(\begin{matrix}0\, &\, 1\\
                                        \cc\, &\, *\\
                                        \tt\, &\, \aa\end{matrix}\right) 
                                          - r^\gg_\tt  \,\nu \left(\begin{matrix}0\, &\, 1\\
                                            \cc\, &\, \gg\\
                                            \tt\, &\, *\end{matrix}\right).     
\end{eqnarray*}
The above r.h.s. contains only  non-positive  terms: 
it means that each of them is equal to 0, 
in particular the first one, for which 
we know that $w_\tt + w_\cc+v_\gg + v_\aa>0$ 
 (note that for the second and third terms,
$r^\aa_\tt$ and/or $r^\gg_\tt$ could be equal to 0).  
This implies that $\nu ( (\eta(0),\xi(0))=(\cc,\tt) )=0$. \\
Similarly, assuming that $\nu ( (\eta(0),\xi(0))=(\cc,\tt) )=0$, 
\eqref{eq:barL1-a-g} becomes
\begin{eqnarray*}
 0=  -[v_\tt + v_\cc+w_\gg + w_\aa]\, \nu \left(\begin{matrix}0\\
                             \aa\\
                             \gg\end{matrix}\right)
                             - r_\aa^\tt \,\nu \left(\begin{matrix}0\, &\, 1\\
                             \cc\, &\, \aa\\
                             \tt\, &\, \gg\end{matrix}\right) 
   - r_\aa^\tt \,\nu \left(\begin{matrix}0\, &\, 1\\
                             \tt\, &\, \aa\\
                             *\, &\, \gg\end{matrix}\right).                                              
\end{eqnarray*}
Since $v_\tt + v_\cc+w_\gg + w_\aa>0$, this implies 
that $\nu ( (\eta(0),\xi(0))=(\aa,\gg) )=0$. \\ 

\noindent$\bullet$
We then consider each of the 3 assumptions on the rates.

(a) Assuming that $r_\aa^\cc = r_\aa^\tt$,  
we have that the r.h.s. in \eqref{eq:barL1-a-g}
contains only  non-positive  terms: 
it means that each of them is equal to 0, in particular the first one, 
for which  $v_\tt + v_\cc+w_\gg + w_\aa>0$. This implies that 
$\nu ( (\eta(0),\xi(0))=(\aa,\gg) )=0$, 
 and we conclude thanks to \eqref{eq:ag0equivct0}.  

(b) Assuming that  $r_\tt^\gg = r_\tt^\aa$  induces a similar reasoning, 
to get first $\nu ( (\eta(0),\xi(0))=(\cc,\tt) )=0$
by considering \eqref{eq:barL1-c-t} and using that
$w_\tt + w_\cc+v_\gg + v_\aa>0$, then 
 conclude thanks to \eqref{eq:ag0equivct0}.  

 (c), \textit{(i)}  
As a preliminary,  we examine the conditions  $(\alpha)$, $(\beta)$, $(\gamma)$,
 $(\delta)$: It is impossible to have neither $(\alpha)$ nor $(\gamma)$ satisfied, since it
 would imply 
 \[
 r_\tt^\gg - r_\tt^\aa > r_\aa^\cc >  r^\gg_\tt + r_\aa^\tt \quad\mbox{ hence }\quad
  - r_\tt^\aa > r_\aa^\tt .
  \]
Thus if one of them is not satisfied, the other automatically is. \\
If $(\beta)$ is satisfied then $(\alpha)$ is not satisfied, 
hence $(\gamma)$ is satisfied. \\
Similarly, if $(\delta)$ is satisfied then $(\gamma)$ is not satisfied, 
hence $(\alpha)$ is satisfied. 
 
\textit{(ii)}
 If $(\alpha)$ and $(\gamma)$  are satisfied, then the r.h.s. of \eqref{eq:ag+ct} 
contains only non-positive terms, hence each of them is equal to 0. Since 
$v_\tt + v_\cc+w_\gg + w_\aa>0$ and $w_\tt + w_\cc+v_\gg + v_\aa>0$, this implies that 
$\nu ( (\eta(0),\xi(0))=(\aa,\gg) )=\nu ( (\eta(0),\xi(0))=(\cc,\tt) )=0$. 

\textit{(iii)} If $(\beta)$ is satisfied, on the one hand
we  bound the sum of the first two terms in the r.h.s. of \eqref{eq:ag+ct}  by
\begin{equation*}                                            
\left(-[v_\tt + v_\cc+w_\gg + w_\aa] 
    + [ (r_\tt^\gg - r_\tt^\aa)- r_\aa^\cc] \right) \,\nu \left(\begin{matrix}0\\
                                               \aa\\
                                               \gg\end{matrix}\right) ,           
\end{equation*}
and on the other hand, by \textit{(i)}, $(\gamma)$ is satisfied. Hence 
the r.h.s. of \eqref{eq:ag+ct}, which is equal to 0, is bounded by a sum of only
non-positive terms, thus each of them is equal to 0. Since $w_\tt + w_\cc+v_\gg + v_\aa>0$,
this implies that $\nu ( (\eta(0),\xi(0))=(\cc,\tt) )=0$. We
 conclude thanks to \eqref{eq:ag0equivct0}. 
 
 \textit{(iv)} Similarly, if $(\delta)$ is satisfied, on the one hand
 we  bound the sum of the third and fourth terms in the r.h.s. of \eqref{eq:ag+ct} by
\begin{equation*}
\left(-[w_\tt + w_\cc+v_\gg + v_\aa]
  + [ (r_\aa^\cc - r_\aa^\tt ) - r^\gg_\tt ] \right) \,\,\nu\left(\begin{matrix}0\\
                                                            \cc\\
                                                            \tt\end{matrix}\right) , 
\end{equation*}  
and on the other hand, by \textit{(i)}, $(\alpha)$ is satisfied. Hence 
the r.h.s. of \eqref{eq:ag+ct}, which is equal to 0, is bounded by a sum of only
non-positive terms, thus each of them is equal to 0. Since 
$v_\tt + v_\cc+w_\gg + w_\aa>0$, this implies that 
$\nu ( (\eta(0),\xi(0))=(\aa,\gg) )=0$. We
 conclude thanks to \eqref{eq:ag0equivct0}. 
 
 The proposition is proved.
 \end{proof} 
\noindent{\bf Acknowledgements.}  
We thank the referee for carefully reading the first version of this text,
and for giving us several helpful comments. We thank both
MAP5 lab. and TU M\"unchen for financial support and hospitality. 
\bibliographystyle{alpha}
\bibliography{Biblio-enm}
\end{document}